\documentclass[11pt, reqno]{amsart}
\usepackage[bookmarksnumbered, plainpages]{hyperref}
% colorlinks,

%\usepackage[
%  hmarginratio={1:1},     % equal left and right margins
%  vmarginratio={1:1},     % equal top and bottom margins
%  textwidth=400pt,        % new text width
%  heightrounded,          % always useful
%  %bindingcorrection=5mm,  % binding correction
%]{geometry
%}
\usepackage{amsmath,amssymb,graphicx,mathabx,accents}
\usepackage{enumerate,mdwlist}
\usepackage{enumitem}
\usepackage{esint}
\usepackage[symbol]{footmisc}
\usepackage{mathtools}
\usepackage{mathrsfs}

\usepackage{tikz}

\numberwithin{equation}{section}

\usepackage{amsthm}

\usepackage{verbatim}

\usepackage{nag}

\usepackage{tikz-cd} 

\makeatletter
\DeclareRobustCommand{\hvec}[1]{{\mathpalette\hvec@{#1}}}
\newcommand{\hvec@}[2]{%
  \vbox{\offinterlineskip
    \ialign{%
      \hfil##\hfil\cr
      $\m@th#1{}_{\rightharpoonup}$\kern-\scriptspace\cr
      $\m@th#1#2$\cr
    }%
  }%
}
\makeatother

\DeclareMathOperator{\RR}{\mathbb{R}}
\DeclareMathOperator{\ZZ}{\mathbb{Z}}
\DeclareMathOperator{\QQ}{\mathbb{Q}}
\DeclareMathOperator{\TT}{\mathbb{T}}
\DeclareMathOperator{\CC}{\mathbb{C}}
\DeclareMathOperator{\NN}{\mathbb{N}}
\DeclareMathOperator{\HH}{\mathbb{H}}

\DeclareMathOperator{\supp}{\text{supp}}

\textheight 22.5truecm \textwidth 14.5truecm
\setlength{\oddsidemargin}{0.35in}\setlength{\evensidemargin}{0.35in}
\setlength{\topmargin}{-.5cm}

\newtheorem{theorem}{Theorem}
\newtheorem{lemma}[theorem]{Lemma}
\newtheorem{corollary}[theorem]{Corollary}
\newtheorem{prop}[theorem]{Proposition}

\newtheorem{remark}[theorem]{Remark}

\DeclareMathOperator{\PP}{\mathbb{P}}

%\author{Jacob Denson\footnote{University of Madison Wisconsin, Madison, WI, jcdenson@wisc.edu}}

\begin{document}

\centerline{}

\centerline{}

\title[Multipliers For Spherical Harmonic Expansions]{Multipliers For Spherical Harmonic Expansions} % and More General Spectral Multipliers on Manifolds}
\author[Jacob Denson]{Jacob Denson$^*$}
\address{$^{*}$ University of Madison Wisconsin, Madison, WI, jcdenson@wisc.edu}
%\dedicatory{This paper is dedicated to Professor ABCD}
\subjclass[2010]{Primary 58J40; Secondary 58C40.}
\keywords{Functions of the Laplacian, Radial Fourier Multipliers, Endpoint Estimates, Local Smoothing for the Wave Equation, Transference Principles, Finsler Manifolds}
%\date{Received: xxxxxx; Revised: yyyyyy; Accepted: zzzzzz.}
%\newline \indent $^{*}$ Corresponding author}

\begin{abstract}
    For any bounded, regulated function $m: [0,\infty) \to \CC$, consider the family of operators $\{ T_R \}$ on the sphere $S^d$ such that $T_R f = m(k/R) f$ for any spherical harmonic $f$ of degree $k$. We completely characterize the compactly supported functions $m$ for which the operators $\{ T_R \}$ are uniformly bounded on $L^p(S^d)$, in the range $1/(d-1) < |1/p - 1/2| < 1/2$. We obtain analogous results in the more general setting of multiplier operators for eigenfunction expansions of an elliptic pseudodifferential operator $P$ on a compact manifold $M$, under curvature assumptions on the principal symbol of $P$, and assuming the eigenvalues of $P$ are contained in an arithmetic progression. One consequence of our result are new transference principles controlling the $L^p$ boundedness of the multiplier operators associated with a function $m$, in terms of the $L^p$ operator norm of the radial Fourier multiplier operator with symbol $m(|\cdot|): \RR^d \to \CC$. In order to prove these results, we obtain new quasi-orthogonality estimates for averages of solutions to the half-wave equation $\partial_t - i P = 0$, via a connection between pseudodifferential operators satisfying an appropriate curvature condition and Finsler geometry.

\end{abstract} \maketitle

\section{Introduction}

Consider a \emph{zonal convolution operator}, a bounded operator $T: L^2(S^d) \to L^2(S^d)$ on the sphere invariant under rotations. %\footnote[1]{By rotation invariant we mean that $Tf \circ A = T(f \circ A)$ for all $A \in SO(d+1)$.}
Then $T$ is diagonalized by \emph{spherical harmonics}; there exists a function $m: \NN \to \CC$, the \emph{symbol} of the operator, such that
\begin{equation} \label{equationZonalExpansion}
    Tf = \sum\nolimits_{k = 0}^\infty m(k) f_k \quad\text{for all functions $f \in L^2(S^d)$},
\end{equation}
where $f = \sum_{k = 0}^\infty f_k$ is the spherical harmonic expansion of $f$. Conversely, any bounded function $m: \NN \to \CC$ defines such an operator. In this paper, we study $L^p$ estimates for such multiplier operators. For a function $m: [0,\infty) \to \CC$ and $R > 0$, we define
\begin{equation}
    T_Rf = \sum\nolimits_{k = 0}^\infty m \big( k/R \big) f_k.
\end{equation}
For a limited range of $p \in [1,\infty]$, we completely characterize all compactly supported functions $m$ for which the operators $\{ T_R \}$ are uniformly bounded on $L^p(S^d)$. Such operators naturally occur in the study of eigenfunction expansions, and we obtain new results in this setting. More generally, we consider \emph{spectral multiplier operators} on a compact manifold $M$, operators diagonalized by eigenfunctions of an elliptic pseudodifferential operator $P$ on $M$. Under the assumption that the eigenvalues of $P$ are contained in an arithmetic progression, and that the principal symbol of $P$ satisfies an appropriate curvature assumption, we obtain an analogous characterization of operators whose dilates are uniformly bounded in $L^p$. As a result, we obtain the first \emph{transference principle} taking $L^p$ estimates on Fourier multipliers on Euclidean space and giving $L^p$ estimates for spectral multipliers on a compact manifold.

% \footnote[1]{We consider pseudodifferential operators acting on sections of the distributional density bundle $\Omega^{1/2}(M)$ and by $L^2(M)$ we mean square-integrable sections of $\Omega^{1/2}(M)$.}

{\bf Acknowledgements.} I would like to thank my advisor Andreas Seeger for innumerable conversations and insights which proved invaluable in producing the results of this paper. This research was supported by NSF grants DMS-2037851, DMS-2054220, and DMS-2348797.

\subsection{Statement of Main Results} Recall a function $m: [0,\infty) \to \CC$ is \emph{regulated} if
\begin{equation}
    m(\lambda_0) = \lim_{\delta \to 0} \fint_{|\lambda - \lambda_0| \leq \delta} m(\lambda)\; d\lambda \quad\text{for all $\lambda_0 \in [0,\infty)$}.
\end{equation}
The main result of this paper is a complete characterization of the bounded, regulated functions $m$ such that the multiplier operators $\{ T_R \}$ are uniformly bounded on $L^p(S^d)$, for a limited range of $L^p$ spaces. For a function $m: [0,\infty) \to \CC$, define   
\begin{equation} \label{definitonOfCp}
    C_p(m) = \left( \int_0^\infty \Big[ \langle t \rangle^{\alpha(p)} |\widehat{m}(t)| \Big]^p\; dt \right)^{1/p}
\end{equation}
where $\alpha(p) = (d-1)(1/p - 1/2)$, and $\widehat{m}(t) = \int_0^\infty m(\lambda) \cos(2 \pi \lambda t) d\lambda$ is the cosine transform of the function $m$.

\begin{theorem} \label{zonalconvolutionapplication}
    Suppose $1 < p < 2(d-1)/(d+1)$. For any bounded, regulated function $m$ with compact support in $(0,\infty)$, define operators $T_R$ on $S^d$ by setting
    \begin{equation}
        T_R f = \sum\nolimits_{k = 0}^\infty m(k/R) f_k,
    \end{equation}
    where $f_k$ is the projection of $f$ onto the space of degree $k$ spherical harmonics. Then,
    \begin{equation} \label{TRBounds}
        \sup\nolimits_{R > 0} \| T_R \|_{L^p(S^d) \to L^p(S^d)} \sim C_p(m),
    \end{equation}
    with implicit constants depending on the support of $m$.
\end{theorem}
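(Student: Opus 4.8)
\medskip
\noindent\emph{Sketch of the argument.}\
Fix $\supp m \subseteq [a,b] \subset (0,\infty)$; all implicit constants below may depend on $a$ and $b$. Let $P = \sqrt{-\Delta_{S^d} + (\tfrac{d-1}{2})^2} - \tfrac{d-1}{2}$, a classical first-order elliptic pseudodifferential operator whose principal symbol is the round norm $|\xi|_x$ on $T^*S^d$, whose eigenvalues $\{0,1,2,\dots\}$ form an arithmetic progression, and which satisfies $Pf_k = kf_k$ on degree-$k$ spherical harmonics. Since $m$ is bounded and regulated we have $T_R = m(P/R)$, and Fourier inversion for the cosine transform expresses this as a fixed constant times
\[
    \int_0^\infty \widehat m(t)\,\cos(2\pi t P/R)\,dt
\]
(we suppress such constants throughout), exhibiting $T_R$ as an average over wave times $t/R$ of solutions of the half-wave equation $(\partial_t - iP)u = 0$. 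Because $\supp m$ is a compact subset of $(0,\infty)$, $T_R$ kills all frequencies outside the band $P\sim R$: fixing a smooth cutoff $\chi$ with $\chi\equiv 1$ on $[a,b]$ and $\supp\chi\subset(0,\infty)$, we have $T_R = T_R\,\chi(P/R)$, and $\chi(P/R)$ is bounded on $L^p(S^d)$ uniformly in $R$ by the Mikhlin--H\"ormander spectral multiplier theorem.

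For the upper bound in \eqref{TRBounds} I would argue by duality. When $R \lesssim 1$ only boundedly many eigenvalues occur and the elementary bound $\|m\|_{L^\infty} \le \|\widehat m\|_{L^1} \lesssim C_p(m)$ (valid because $\alpha(p)p' > 1$ throughout the stated range) suffices, so assume $R$ is large. It then suffices to bound $|\langle T_R f, h\rangle|$ by $C_p(m)\,\|f\|_{L^p}\|h\|_{L^{p'}}$ for $f$ with $f = \chi(P/R)f$. For such $f$ one has $\langle T_R f, h\rangle = \int_0^\infty \widehat m(t)\,\langle \cos(2\pi t P/R)f,\,h\rangle\,dt$, and writing $\widehat m(t) = \langle t\rangle^{-\alpha(p)}\cdot\langle t\rangle^{\alpha(p)}\widehat m(t)$ and applying H\"older's inequality in $t$ bounds $|\langle T_R f, h\rangle|$ by $C_p(m)$ times the left-hand side of the estimate
\[
    \Big(\int_0^\infty \langle t\rangle^{-\alpha(p)p'}\,\big|\langle \cos(2\pi t P/R)f,h\rangle\big|^{p'}\,dt\Big)^{1/p'} \;\lesssim\; \|f\|_{L^p(S^d)}\,\|h\|_{L^{p'}(S^d)}.
\]
So the upper bound reduces to this local-smoothing estimate, asserted for every $f$ with $P$-spectrum in $[aR,bR]$ and every $h$; by duality in the $t$-variable no information is lost in this reduction.

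To prove the local-smoothing estimate I would decompose dyadically in $t$, treating the integral over $|t|\sim 2^j$ for $j\ge 0$ separately. By finite speed of propagation the relevant piece of $\cos(2\pi t P/R)\chi(P/R)$ has Schwartz kernel supported within $O(2^j/R)$ of the diagonal. For $2^j \lesssim R$ (wave times below the injectivity radius) one combines the dispersive fixed-time bound $\|\cos(\tau P)\chi(P/R)\|_{L^1\to L^\infty} \lesssim R^d(1+R|\tau|)^{-(d-1)/2}$ with an $L^p$ oscillatory-integral estimate exploiting the non-vanishing curvature of the cosphere $\{|\xi|_x = 1\}$: the curvature is precisely what yields quasi-orthogonality among half-wave solutions at times separated on the unit scale, upgrading the trivial $L^1_t$ Minkowski bound to the required $L^{p'}_t$ bound, and the exponent $\alpha(p)$ emerges from balancing the decay rate $(d-1)/2$ against the $L^p$--$L^{p'}$ numerology in this range. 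For $2^j \gtrsim R$ (wave times at or beyond the injectivity radius), where geodesics refocus at conjugate points and the naive parametrix for $\cos(\tau P)$ degenerates, I would propagate the estimate through the Finsler-geometric description of the half-wave flow of $P$ developed in the body of the paper, the weight $\langle t\rangle^{-\alpha(p)}$ (with $\alpha(p) > 1$ here) supplying exactly the decay needed to sum the contributions of successive refocusings. I expect this last regime --- a genuine $L^p$ quasi-orthogonality estimate for averaged half-wave solutions past the first caustic --- to be the main obstacle, and it is the technical heart of the paper.

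For the lower bound in \eqref{TRBounds} I would combine two test computations. First, testing $T_R$ on single spherical harmonics gives $\sup_R \|T_R\|_{L^p\to L^p} \ge \sup_{R,k}|m(k/R)| = \|m\|_{L^\infty}$, and $\int_0^1 |\widehat m(t)|^p\,dt \le \|\widehat m\|_{L^\infty}^p \lesssim \|m\|_{L^\infty}^p$. Second, a standard transference argument --- rescaling a coordinate ball of radius $R^{-1}$ on $S^d$ to the unit ball of $\RR^d$, under which $m(k/R)$ converges to the radial Euclidean symbol $m(|\cdot|)$ --- yields $\liminf_{R\to\infty}\|T_R\|_{L^p(S^d)\to L^p(S^d)} \gtrsim \|m(|\cdot|)\|_{M^p(\RR^d)}$; testing the Euclidean multiplier operator against a fixed Schwartz $f$ with $\widehat f \equiv 1$ near the annulus $\{|\xi| \in \supp m\}$ gives $T_{m(|\cdot|)}f = \mathcal F^{-1}[m(|\cdot|)] =: K$, so $\|m(|\cdot|)\|_{M^p(\RR^d)} \ge \|K\|_{L^p}/\|f\|_{L^p}$; and inserting the surface-measure asymptotics $\widehat{d\sigma_{S^{d-1}}}(\xi) = c|\xi|^{-(d-1)/2}\big(e^{2\pi i|\xi|}a_+(|\xi|)+e^{-2\pi i|\xi|}a_-(|\xi|)\big)$ into the polar-coordinate formula for $K$ and applying stationary phase gives $|K(x)|\approx |x|^{-(d-1)/2}|\widehat m(|x|)|$ for $|x|\gtrsim 1$, hence $\|K\|_{L^p}^p \gtrsim \int_1^\infty t^{\,d-1-(d-1)p/2}\,|\widehat m(t)|^p\,dt = \int_1^\infty\big(t^{\alpha(p)}|\widehat m(t)|\big)^p\,dt$. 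Adding the two contributions gives $\sup_R\|T_R\|_{L^p\to L^p} \gtrsim C_p(m)$, which together with the upper bound proves \eqref{TRBounds}.
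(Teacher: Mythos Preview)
Your reduction of Theorem \ref{zonalconvolutionapplication} to the general pseudodifferential setting via $P = (a^2 - \Delta)^{1/2} - a$ matches the paper exactly; the paper's proof of this theorem is nothing more than that reduction, with all substance deferred to Theorem \ref{CpVersionOfTheorem}. So what you are really sketching is the proof of Theorem \ref{CpVersionOfTheorem}, and there your outline diverges from the paper in one structural way and is incomplete in two substantive ways.

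Structurally: your H\"older-in-$t$ reduction to a weighted $L^{p'}_t$ estimate is a valid reformulation (and, as you note, equivalent by $t$-duality), but it is \emph{only} a reformulation---the resulting estimate is exactly as hard as the original theorem. The paper does not take this route for the small-time regime.

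First gap: for wave times below the injectivity radius (your $2^j\lesssim R$), the paper does \emph{not} proceed by combining a fixed-time dispersive bound with an oscillatory-integral estimate. That interpolation circle of ideas leads at best to the $D_p(m)$ condition of \cite{KimSpectral}, which is strictly stronger than $C_p(m)$ by $|1/p-1/2|$ derivatives. To reach $C_p(m)$ the paper adapts the Heo--Nazarov--Seeger \emph{density decomposition}: one writes $T^I u = \sum_{(x_0,t_0)} f_{x_0,t_0}$ with each piece localized in space and wave-time, proves $L^2$ quasi-orthogonality for sums over index sets of controlled density (Proposition \ref{L2DensityProposition}, using the Finsler-geometric inner-product bounds of Proposition \ref{theMainEstimatesForWave}), and then runs a combinatorial density-decomposition/interpolation argument (Lemma \ref{LpBoundLemma}) to pass from $L^2$ to $L^p$. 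Your phrase ``quasi-orthogonality among half-wave solutions at times separated on the unit scale'' points at the first ingredient but omits the second, and it is the density decomposition that produces the correct exponent $\alpha(p)$.

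Second gap: for large wave times (your $2^j\gtrsim R$), ``summing successive refocusings using $\alpha(p)>1$'' is morally the periodicity of $e^{2\pi itP}$, which the paper uses explicitly to fold the $t$-integral onto $I_0=[-\tfrac12,\tfrac12]$. But within a single period there remains the interval $\varepsilon_M\le|t|\le\tfrac12$, where the parametrix has already failed (the antipodal caustic on $S^d$ lies inside one period). The paper handles this regime not by further Finsler geometry but by a direct reduction to the endpoint local-smoothing inequality $\|e^{2\pi itP}f\|_{L^{p'}_xL^{p'}_t(I_0)}\lesssim\|f\|_{L^{p'}_{\alpha(p)-1/p'}}$ of Lee--Seeger (Lemma \ref{LocalSmoothingLargeTimesTheorem}); this works because on that $t$-range the folded symbol $b^{II}$ already carries a factor $R^{1/p'-\alpha(p)}$ (Lemma \ref{decompositionLemma}). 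Your sketch does not identify this mechanism.

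Your lower-bound argument is correct and is essentially what the paper does (Lemma \ref{lowerboundlemma}), citing Mitjagin's transference and the Euclidean characterization of \cite{KimQuasiradial}; your kernel-asymptotics computation is a valid direct substitute for the latter citation, modulo the standard care with the $e^{\pm 2\pi i|x|}$ interference.
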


\begin{remark} \label{RemarkHolder}
\normalfont
   The Hausdorff-Young inequality and Sobolev embedding theorem implies that any regulated function $m$ with $C_p(m) < \infty$ for $1 < p < 2d/(d+1)$ must be H\"{o}lder continuous of order $d(1/p - 1/2) - 1/2$. Thus one can think of $C_p(m)$ as a measure of the smoothness of the function $m$, and then Theorem \ref{CpVersionOfTheorem} aligns with classical results in harmonic analysis linking boundedness of multiplier operators and the smoothness of their symbol.

%    Indeed, the condition $C_p(m) < \infty$ can be viewed as a smoothness condition on the function $m$, thus aligning Theorem \ref{CpVersionOfTheorem} with classical results in harmonic analysis that relate the $L^p$ boundedness of a multiplier operator to the smoothness of it's symbol. The condition $C_p(m) < \infty$ specifies that the Fourier transform of $\langle \partial_t \rangle^{\alpha(p)} m$ lies in $L^{p}(\RR)$. The Hausdorff-Young inequality implies that $\langle \partial_t \rangle^{\alpha(p)} m \in L^{p'}$, and then the Sobolev embedding theorem implies $m$ must be H\"{o}lder continuous of order $d(1/p - 1/2) - 1/2$.
%However, it is important to note that, unlike in the study of Bochner-Riesz, the condition $C_p(m)$ does not mean $m$ is piecewise smooth. See Figure \ref{BadMultiplierFigure} for a nowhere differentiable function $m$ with $C_p(m) < \infty$.

% It is conjectured that the characterization $\| \mathfrak{M}_m \| \sim C_p(m)$ continues to hold for $1/2d < |1/p - 1/2| < 1/2$, though this conjecture is of significant difficulty, implying the Bochner-Riesz conjecture as a special case, and thus the restriction and Kakeya conjecture. Note that the conjectured range is the largest range under which the condition $C_p(m) < \infty$ implies that the functions $\{ m_h \}$ uniformly lie in some H\"{o}lder space of positive order.
%In particular, in the range of $p$ and $d$ we are considering, any function $m$ with $C_p(m) < \infty$ must be H\"{o}lder continuous of order $\frac{d+1}{2(d-1)}$.
\end{remark}

\begin{remark}
\normalfont
By duality, for any function $m: [0,\infty) \to \CC$, and for $1/p + 1/q = 1$,
\begin{equation}
    \| T_R \|_{L^p \to L^p} = \| T_R \|_{L^q \to L^q}.
\end{equation}
It thus follows from Theorem \ref{zonalconvolutionapplication} that for $2(d-1)/(d-3) < p < \infty$, and any regulated function $m: [0,\infty) \to \CC$ supported on a compact subset of $(0,\infty)$,
\begin{equation}
    \sup\nolimits_R \| m_R(P) \|_{L^p(S^d) \to L^p(S^d)} \sim C_q(m).
\end{equation} % = \left( \int_0^\infty \Big[ \langle t \rangle^{\alpha(p)} |\widehat{m}(t)| \Big]^{p'}\; dt \right)^{1/p'}. \]
We state our results for $1 < p < 2(d-1)/(d+1)$ since the geometric arguments we use in our proofs apply more naturally in the range $1 \leq p \leq 2$ than the range $2 \leq p \leq \infty$.
\end{remark}

\begin{remark} \label{ABRemark}
\normalfont
To get more explicit control over the implicit constants in Theorem \ref{CpVersionOfTheorem}, we can perform a dyadic decomposition. Given $m$ supported on an interval $(A,B)$, consider a decomposition $m_j(\lambda) = \chi(\lambda) m(2^j \lambda)$ for some fixed $\chi \in C_c^\infty(0,\infty)$ with $\sum \chi(\lambda / 2^j) = 1$. Applying Theorem \ref{CpVersionOfTheorem} and the triangle inequality, we see that
\begin{equation} \label{ABVersionOfTheorem}
    \sup\nolimits_R \| m_R(P) \|_{L^p(S^d) \to L^p(S^d)} \lesssim |\log(B/A)| \sup\nolimits_{j \in \ZZ} C_p(m_j).
\end{equation}
%
%In  \cite{KimQuasiradial}, it was proved that for general functions $m$, not necessarily compactly supported,
%
%\begin{equation} \label{HNSResultnotcompact}
%    \| m(p(x,D)) \|_{L^p \to L^p} \sim \sup\nolimits_j C_p(m_j).
%\end{equation}
%
%Thus applying \eqref{mitjagininequality} and \eqref{HNSResultnotcompact}, we find that for any regulated multiplier $m$,
%
%\begin{equation} \label{DAIOWJDIOWAJCOIAWJC}
%    \sup\nolimits_{R \to \infty} \| m_R(P) \|_{L^p \to L^p} \gtrsim \sup\nolimits_j C_p(m_j).
%\end{equation}
%
In a follow-up paper, we plan to use methods of atomic decompositions to remove the dependence of this bound on $A$ and $B$, i.e. proving that for any regulated $m$,
\begin{equation} \label{FutureEquation}
    \sup\nolimits_R \| m_R(P) \|_{L^p(S^d) \to L^p(S^d)} \sim \sup\nolimits_{j \in \ZZ} C_p(m_j).
\end{equation}
One consequence would be a characterization of $L^p$ bounded spectral multipliers as in Theorem \ref{zonalconvolutionapplication}, but for multipliers that are not necessarily compactly supported.
%One consequence would be a general transference result, i.e. a version of Corollary \ref{maintheoremprime} for a general multiplier $m: [0,\infty) \to \CC$ rather than just for compactly supported ones.
\end{remark}

In order to prove Theorem \ref{zonalconvolutionapplication}, it is most natural to work in the language of spectral multipliers. Let $M$ be a compact manifold equipped with some volume density, and let $P$ be a classical elliptic pseudodifferential operator of order one on $M$, formally self-adjoint with respect to the volume density and with non-negative principal symbol $p: T^*M \to [0,\infty)$. Then there exists a discrete set $\Lambda_P \subset [0,\infty)$ and a decomposition $L^2(M) = \bigoplus_{\Lambda_P} \mathcal{V}_\lambda$, where $Pf = \lambda f$ for $f \in \mathcal{V}_\lambda$. Any function $f \in L^2(M)$ has an orthogonal decomposition $f = \sum\nolimits_{\lambda \in \Lambda_P} f_\lambda$, with $f_\lambda \in \mathcal{V}_\lambda$, and for any bounded function $m: \Lambda_P \to \CC$, we define
\begin{equation}
    m(P) f = \sum\nolimits_{\lambda \in \Lambda_P} m(\lambda) f_\lambda
\end{equation}
We call $m(P)$ a \emph{spectral multiplier}. For any $m: [0,\infty) \to \CC$ and $R > 0$, we define $m_R(\lambda) = m(\lambda / R)$ and consider the family of operators $m_R(P)$. We are able to prove a version of Theorem \ref{zonalconvolutionapplication} in this more general context, provided that the eigenvalues of the operator $P$ lie in an arithmetic progression and the principal symbol $p$ satisfies an appropriate curvature condition.

%The main result of this paper are necessary and sufficient conditions that a compactly supported function $m$ must satisfy in order to guarantee that the operators $\{ T_R \}$ are uniformly bounded on $L^p(S^d)$, for a limited range of $L^p$ spaces. Such bounded often occur in the study of the convergence of eigenfunction expansions in $L^p$, and we obtain new results characterizing when one obtains this convergence. More generally, we consider orthogonal expansions on an arbitrary compact manifold $M$, with respect to an orthogonal basis of eigenfunctions for an elliptic pseudodifferential operator $P$. Under the assumption that the eigenvalues of $P$ satisfy certain periodicity assumptions, and that the principal symbol of $P$ satisfies a curvature assumption, we prove necessary and sufficient conditions for the associated spectral multiplier operators to have uniformly bounded dilates on $L^p(M)$.

\begin{theorem} \label{CpVersionOfTheorem}
    Let $M$ be a compact manifold of dimension $d$, and let $P$ be a classical elliptic self-adjoint pseudodifferential operator of order one on $M$ with principal symbol $p: T^* M \to [0,\infty)$. Suppose that
    \begin{enumerate}
    \setlength\itemsep{0.5em}

        \item For each $x \in M$, the cosphere
        \begin{equation}
            S_x^* = \big\{ \xi \in T_x^*M : p(x,\xi) = 1 \big\}
        \end{equation}
        is a hypersurface in $T_x^* M$ with non-vanishing Gaussian curvature.

        \item The eigenvalues of $P$ are contained in an arithmetic progression.
        %
%        \[ \cos \left(2 \pi t_0 P \right) \]
        % cos(2 pi [  ] sqrt{l(l+n)}  )

        % t_0 
        % 

        % sin(2 pi t_0 )
        % cos( (n + m) t ) = cos(nt) cos(mt) - sin(nt) sin(mt)
        %                  = 1

%        \item The flow on $T^* M$ induced by the equations
        %
%        \begin{equation}
%            \frac{dx}{dt} = (\partial_\xi p)(x,\xi) \quad\text{and}\quad \frac{d\xi}{dt} = - (\partial_x p)(x,\xi)
%        \end{equation}
        %
%        is periodic.
    \end{enumerate}
    Then if $m$ is a bounded, regulated function compactly supported in $(0,\infty)$, then
    \begin{equation} \label{aiowjdowadjowai3424234}
        \sup\nolimits_R \| m_R(P) \|_{L^p(M) \to L^p(M)} \sim C_p(m),
    \end{equation}
    with implicit constants depending on the support of $m$.
\end{theorem}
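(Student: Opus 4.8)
The plan is to prove Theorem \ref{CpVersionOfTheorem} in two halves: the upper bound $\sup_R \|m_R(P)\|_{L^p \to L^p} \lesssim C_p(m)$ and the lower bound $C_p(m) \lesssim \sup_R \|m_R(P)\|_{L^p \to L^p}$. The unifying tool is the representation of $m_R(P)$ via the half-wave propagator: using the cosine transform, one writes $m_R(P) = c\int_0^\infty \widehat{m}(t)\, \cos(tP/R)\, dt$ (or equivalently a sum of $e^{\pm itP/R}$), so that $\|m_R(P)\|_{L^p \to L^p} \leq \int_0^\infty |\widehat{m}(t)|\, \|\cos(tP/R)\|_{L^p \to L^p}\, dt$ after a suitable decomposition. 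The arithmetic-progression hypothesis is what makes $\cos(tP/R)$ effectively periodic in $t$, so the $t$-integral localizes to a single period up to the weight $\langle t\rangle^{\alpha(p)}$, and the curvature hypothesis (nonvanishing Gaussian curvature of the cospheres $S_x^*$) is what gives the correct dispersive/fixed-time decay and, crucially, the quasi-orthogonality of time-averaged wave packets — this is precisely the Finsler-geometric input the abstract advertises.

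Let me lay out the upper bound more concretely. First I would perform the dyadic decomposition $m = \sum_j m_j$ as in Remark \ref{ABRemark}, reducing to estimating a single dyadic piece with $C_p(m_j)$ in place of $C_p(m)$ and paying a $|\log(B/A)|$ factor absorbed into the support-dependent constant; by scaling I may assume $m$ is supported in a fixed compact interval of $(0,\infty)$. The core estimate is then a fixed-time bound and a quasi-orthogonality bound for the operators $A_t^R := \cos(tP/R)$ (or their averaged versions): one needs something like $\|\psi(P/R)\, e^{itP/R}\|_{L^p \to L^p} \lesssim \langle t \rangle^{\alpha(p)}$ for $1 \le p \le 2$ in the stated range, which for $P = \sqrt{-\Delta}$ on $S^d$ recovers the classical bound governing the density of states / spherical harmonic projections, and in general follows from the stationary-phase analysis of the wave kernel under the curvature hypothesis (Seeger–Sogge–Stein-type parametrix). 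Then, because the weight and the support of $\widehat{m}$ interact through $C_p(m)$, one sums these over the relevant range of $t$ using the arithmetic-progression periodicity to control the number of "wraps" and a Minkowski/Hölder argument to pass from the $\ell^p$-type norm $C_p(m)$ to the operator norm — this is where working in the range $1 \le p \le 2$ is genuinely convenient, as the paper notes, because one can sum in $L^p$ via the triangle inequality rather than needing a square function.

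For the lower bound, I would test $m_R(P)$ against suitable functions: the natural candidates are (i) approximate reproducing kernels / "wave packet" data concentrated near a point, which force pointwise control of the kernel of $m_R(P)$ and hence, via the curvature hypothesis giving a lower bound on the oscillatory kernel along the geodesic flow, a lower bound by $\langle t\rangle^{\alpha(p)}|\widehat{m}(t)|$ for each dyadic block of $t$; and (ii) a $t$-dependent modulation/randomization (Khintchine or a clever deterministic choice) to assemble the individual-$t$ lower bounds into the full $L^p$ norm $C_p(m)$. Concretely, one picks $R$ comparable to the relevant frequency scale, localizes the kernel of $m_R(P)$ near a geodesic segment of length $\sim t$ emanating from a point, and exploits that on such a segment the kernel looks like the corresponding Euclidean (Finsler) wave kernel times $\widehat{m}(t)$; integrating the resulting lower bound in $L^p$ over the geodesic recovers the weight $\langle t\rangle^{\alpha(p)}$.

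The main obstacle will be the quasi-orthogonality estimate for time-averaged half-wave solutions under the pseudodifferential curvature hypothesis — i.e. showing that the pieces of $\cos(tP/R)$ at well-separated time scales (equivalently, at well-separated points along the geodesic flow) are almost orthogonal with the right constants, uniformly in $R$, when $P$ is a general elliptic $\Psi$DO whose cospheres have nonvanishing curvature rather than the round metric Laplacian. This is exactly the point where the excerpt's promised "connection between pseudodifferential operators satisfying an appropriate curvature condition and Finsler geometry" must be deployed: one reinterprets the Hamiltonian flow of $p$ as the geodesic flow of a Finsler metric, uses the resulting Finslerian exponential map and its Jacobian estimates (which encode the curvature hypothesis) to build a parametrix for $e^{itP}$ with controlled phase and amplitude, and then runs a stationary-phase / $TT^*$ argument on that parametrix. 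Handling the uniformity in $R$ and the endpoint nature of the range $1/(d-1) < |1/p - 1/2| < 1/2$ — where the bounds are sharp and there is no room to spare — is the delicate part; everything else (dyadic decomposition, passage between $C_p(m)$ and operator norms, the transference to $S^d$ via the arithmetic progression of $\sqrt{-\Delta + ((d-1)/2)^2}$'s spectrum) is comparatively routine bookkeeping.
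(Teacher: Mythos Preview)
Your proposal correctly identifies several structural ingredients --- the Fourier representation via wave propagators, the periodicity from the arithmetic-progression hypothesis, and the need for quasi-orthogonality coming from the Finsler geometry of the cospheres --- but there is a genuine gap in the mechanism for the upper bound, and the lower bound is handled quite differently in the paper.

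\textbf{Upper bound: the gap.} Your proposed route is essentially
\[
\|m_R(P)\|_{L^p\to L^p} \le \int_0^\infty |\widehat{m}(t)|\, \|\cos(tP/R)\|_{L^p\to L^p}\, dt,
\]
then invoke a fixed-time bound $\|\psi(P/R)e^{itP/R}\|_{L^p\to L^p} \lesssim \langle t\rangle^{\alpha(p)}$ and ``sum via Minkowski/H\"older''. But this produces $\int |\widehat{m}(t)|\langle t\rangle^{\alpha(p)}\,dt$, i.e.\ the $L^1$ weighted norm of $\widehat{m}$, not $C_p(m) = \|\langle t\rangle^{\alpha(p)}\widehat{m}\|_{L^p}$. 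There is no H\"older step that converts an $L^1$ integral of operator norms into an $L^p$ norm of the symbol without losing the endpoint; this is precisely why earlier results (e.g.\ Kim's $D_p(m)$ bound discussed in the paper) required extra smoothness on $m$. The paper does \emph{not} sum fixed-time operator norms. Instead it decomposes both the input $u = \sum_{x_0} u_{x_0}$ spatially at scale $1/R$ and the time interval $[-\varepsilon_M,\varepsilon_M]$ into length-$1/R$ pieces indexed by $t_0$, producing wave packets $f_{x_0,t_0} = T^I_{t_0}u_{x_0}$, and then proves an $L^p$ square-root cancellation estimate for $\sum_{(x_0,t_0)} f_{x_0,t_0}$. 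The $L^2$ quasi-orthogonality you mention (Proposition~\ref{theMainEstimatesForWave}) is only the starting point; the passage to $L^p$ uses a \emph{density decomposition} argument adapted from Heo--Nazarov--Seeger (Proposition~\ref{L2DensityProposition} and Lemma~\ref{LpBoundLemma}), in which one stratifies the index set $\mathcal{E} \subset \mathcal{X}_R \times \mathcal{T}_R$ by density, obtains $L^2$ bounds depending on the density via the Finsler quasi-orthogonality, and interpolates with trivial $L^1$ bounds on exceptional sets. This machinery is what produces the $\ell^p$-type norm $C_p(m)$ rather than an $\ell^1$ norm, and it is entirely absent from your outline.

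You also omit the two-regime split: the density-decomposition argument handles only times $|t|\le \varepsilon_M$ (Regime~I). For $\varepsilon_M \le |t| \le 1/2$ (Regime~II) the paper uses a separate reduction to the \emph{endpoint local smoothing inequality} $\|e^{2\pi itP}f\|_{L^{p'}(M\times I_0)} \lesssim \|f\|_{L^{p'}_{\alpha(p)-1/p'}}$ (Lemma~\ref{LocalSmoothingLargeTimesTheorem}), exploiting that $C_p(m)$ gives stronger control on $\widehat{m}$ at large $t$.

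\textbf{Lower bound.} Your test-function/Khintchine approach may be workable but is far more elaborate than what the paper does: the lower bound (Lemma~\ref{lowerboundlemma}) is a two-line argument citing Mitjagin's transference inequality $\sup_R\|m_R(P)\|_{L^p} \gtrsim \|m(p(x_0,D))\|_{L^p(\RR^d)}$ together with Kim's characterization $\|m(p(x_0,D))\|_{L^p(\RR^d)} \sim C_p(m)$ for quasiradial multipliers.
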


Theorem \ref{CpVersionOfTheorem} immediately implies Theorem \ref{zonalconvolutionapplication}.

\begin{proof}[Proof of Theorem \ref{zonalconvolutionapplication}]
    Let $\mathcal{H}_k(S^d)$ denote the space of all spherical harmonics of degree $k$. If $\Delta$ is the Laplace-Beltrami operator on $S^d$, then
    \begin{equation}
        \Delta f = -k(k+d-1) f \quad\quad\text{for any $f \in \mathcal{H}_k(S^d)$}.
    \end{equation}
    If we let $a = (d-1)/2$ and $P = (a^2 - \Delta)^{1/2} - a$, then
    \begin{equation}
        Pf = k f \quad\quad\text{for $f \in \mathcal{H}_k(S^d)$}.
    \end{equation}
    Thus $T_R = m_R(P)$. The operator $P$ is a pseudodifferential operator of order $1$ with principal symbol $p(x,\xi) = |\xi|_x$, where $|\xi|_x$ is the norm induced on $T^*M$ by the Riemannian metric on $S^d$. Thus $P$ satisfies the assumptions of Theorem \ref{CpVersionOfTheorem}, since the cospheres of $p$ are all ellipses, and $\Lambda_P = \NN$. But then Theorem \ref{zonalconvolutionapplication} immediately follows from the conclusion of Theorem \ref{CpVersionOfTheorem}. 
\end{proof}

%then $\sigma(P) = \NN$, and $P = \phi(\Delta)$, where $\phi(\lambda) = -d/2 + \sqrt{d^2/4 + \lambda}$. The function $\phi$ is a symbol of order $1/2$, and so Theorem 1 of \cite{Strichartz} implies $P$ is a pseudodifferential operator of order $1$, with principal symbol $p(x,\xi) = |\xi|_x$. The cospheres of $p$ are ellipses, and so have non-vanishing scalar curvature. Thus $P$ satisfies the assumptions of Theorem \ref{CpVersionOfTheorem}. Since $T_R = m(P/R)$, Theorem \ref{CpVersionOfTheorem} implies the operators $\{ T_R \}$ are uniformly bounded on $L^p(S^d)$ for $m: [0,\infty) \to \CC$ with compact support in $(0,\infty)$ if and only if $C_p(m) < \infty$.

\subsection{Applications to Eigenfunction Expansions}

In light of the uniform boundedness principle, Theorem \ref{CpVersionOfTheorem} also implies results for expansions in eigenvalues. If we assume \eqref{FutureEquation}, then we could actually obtain a complete characterization of the multipliers that induce convergent expansions in eigenvalues, namely, that for any regulated function $m: [0,\infty) \to \CC$, the condition
\begin{equation} \label{partialsumbound}
    \lim\nolimits_{R \to \infty} \| m_R(P) f - f \|_{L^p(M)} = 0 \quad\text{for any $f \in L^p(M)$}
\end{equation}
holds if and only if $m(0) = 1$ and $\sup_{j \in \ZZ} C_p(m_j) < \infty$. As it stands, Theorem \ref{CpVersionOfTheorem} does not imply a result directly comparable to \eqref{partialsumbound}, since if $m$ is compactly supported in $(0,\infty)$, then it is never true that $m(0) = 1$. But Theorem \ref{CpVersionOfTheorem} does imply a weaker result.

\begin{corollary} \label{eigenfunctionexpansioncharacterization}
    Suppose $M$ is a compact manifold of dimension $d$, and $P$ is a classical elliptic self-adjoint pseudodifferential operator of order one on $M$ satisfying the assumptions of Theorem \ref{CpVersionOfTheorem}. If $1 < p < 2 (d-1)/(d+1)$, then for any regulated function $m$ compactly supported in $(0,\infty)$, the condition
    \begin{equation} \label{eigenfunctionexpansionequation}
        \lim\nolimits_{R \to \infty} \| m_R(P) f \|_{L^p(M)} = 0 \quad\text{for any $f \in L^p(M)$}.
    \end{equation}
    holds if and only if $C_p(m) < \infty$.
\end{corollary}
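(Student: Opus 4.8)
The plan is to deduce the corollary directly from Theorem \ref{CpVersionOfTheorem}, using a density argument in one direction and the uniform boundedness principle in the other; no new harmonic analysis is needed beyond Theorem \ref{CpVersionOfTheorem} itself.

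For the direction $C_p(m) < \infty \Longrightarrow \eqref{eigenfunctionexpansionequation}$, Theorem \ref{CpVersionOfTheorem} gives $C := \sup_{R>0} \| m_R(P) \|_{L^p(M) \to L^p(M)} < \infty$, so it suffices to verify \eqref{eigenfunctionexpansionequation} on a dense subspace of $L^p(M)$. I would take the linear span $\mathcal{E}$ of the eigenfunctions of $P$, which is dense in $L^2(M)$ by the spectral theorem, hence dense in $L^p(M)$ since $p < 2(d-1)/(d+1) < 2$ and $M$ has finite volume. If $f \in \mathcal{E}$, then $f = \sum_{\lambda \in F} f_\lambda$ for some finite set $F \subset \Lambda_P$. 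Since $\supp(m)$ is a compact subset of $(0,\infty)$, say contained in $[a,b]$ with $a > 0$, we have $m(0) = 0$ (as $m$ is regulated and vanishes near $0$) and $m(\lambda/R) = 0$ whenever $R > \lambda/a$; hence $m_R(P) f = 0$ for all sufficiently large $R$, so \eqref{eigenfunctionexpansionequation} holds trivially for such $f$. For general $f \in L^p(M)$ and $\varepsilon > 0$, choose $g \in \mathcal{E}$ with $\| f - g \|_{L^p(M)} < \varepsilon$; then $\limsup_{R \to \infty} \| m_R(P) f \|_{L^p(M)} \leq C\varepsilon$, and letting $\varepsilon \to 0$ yields \eqref{eigenfunctionexpansionequation}.

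For the converse I would argue by contraposition. Assume $C_p(m) = \infty$, so that $\sup_{R > 0} \| m_R(P) \|_{L^p(M) \to L^p(M)} = \infty$ by Theorem \ref{CpVersionOfTheorem}. The key observation is that this supremum must be captured as $R \to \infty$: for any fixed $R_0 > 0$, the operators $\{ m_R(P) : 0 < R \leq R_0 \}$ all have rank bounded by $\#\{ \lambda \in \Lambda_P : \lambda \leq b R_0 \} < \infty$, since $\Lambda_P$ is discrete and $m(\lambda/R) \neq 0$ forces $\lambda \leq bR$, and each spectral projection $f \mapsto f_\lambda$ is bounded on $L^p(M)$; hence $\sup_{0 < R \leq R_0} \| m_R(P) \|_{L^p(M) \to L^p(M)} < \infty$. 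Therefore there is a sequence $R_n \to \infty$ with $\| m_{R_n}(P) \|_{L^p(M) \to L^p(M)} \to \infty$. If \eqref{eigenfunctionexpansionequation} held, then for every $f \in L^p(M)$ the sequence $m_{R_n}(P) f$ would converge to $0$, hence be bounded in $L^p(M)$, and the uniform boundedness principle would force $\sup_n \| m_{R_n}(P) \|_{L^p(M) \to L^p(M)} < \infty$ --- a contradiction. Thus \eqref{eigenfunctionexpansionequation} implies $C_p(m) < \infty$.

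The only step requiring genuine care --- and the closest thing to an obstacle --- is the finite-rank observation used to exclude the possibility that the operator norms $\| m_R(P) \|$ blow up only over a bounded range of $R$; this is precisely where the discreteness of $\Lambda_P$ and the separation of $\supp(m)$ from the origin enter. Everything else is a routine combination of Theorem \ref{CpVersionOfTheorem} with standard functional analysis.
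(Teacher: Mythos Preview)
Your proof is correct and follows essentially the same approach as the paper: both directions rest on Theorem \ref{CpVersionOfTheorem}, with the uniform boundedness principle handling the implication \eqref{eigenfunctionexpansionequation} $\Rightarrow$ $C_p(m) < \infty$ and a density argument (finite linear combinations of eigenfunctions) handling the converse. Your version is somewhat more careful than the paper's, in that you explicitly verify the finite-rank observation ensuring $\sup_{0 < R \leq R_0} \| m_R(P) \|_{L^p \to L^p} < \infty$, so that any blow-up of operator norms must occur along a sequence $R_n \to \infty$; the paper simply asserts that the uniform boundedness principle applies.
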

\begin{proof}
    The uniform boundedness principle implies that if \eqref{eigenfunctionexpansionequation} holds, then the operators $m_R(P)$ are uniformly bounded on $L^p(M)$, and Theorem \ref{CpVersionOfTheorem} implies this holds if and only if $C_p(m) < \infty$. Conversely, if $C_p(m) < \infty$, then by Theorem \ref{CpVersionOfTheorem}, the operators $m_R(P)$ are uniformly bounded on $L^p(M)$, and one can obtain \eqref{eigenfunctionexpansionequation} by first proving the result for functions $f$ that are finite linear combinations of eigenfunctions, and then applying an approximation argument since finite linear combinations of eigenfunctions are dense in $L^p(M)$.
\end{proof}

\subsection{A Resulting Transference Theorem} \label{transference}

One may rephrase Theorem \ref{CpVersionOfTheorem} as a \emph{transference theorem}, which gives an equivalence between $L^p$ operator bounds for zonal convolution operators, and $L^p$ operator bounds for Fourier multiplier operators on Euclidean space.

\begin{corollary} \label{maintheoremprime}
    Suppose $M$ is a compact manifold of dimension $d$, and $P$ is a classical elliptic self-adjoint pseudodifferential operator of order one on $M$ satisfying the assumptions of Theorem \ref{CpVersionOfTheorem}. For any regulated function $m$ compactly supported in $(0,\infty)$, consider the radial Fourier multiplier operator $m( |D| )$ on $\RR^d$ defined by
    \begin{equation} \label{FourierMultiplierEquation}
    m \big(|D| \big) f(x) = \int_{\RR^d} m(|\xi|) \widehat{f}(\xi) e^{2 \pi i \xi \cdot x}\; d\xi.
    \end{equation}
    Then for $1 < p < 2(d-1)/(d+1)$,
    %
%    \[ m_R(P): L^p(M) \to L^p(M) \quad\text{and}\quad m \big( |D| \big): L^p(\RR^d) \to L^p(\RR^d) \]
    %
%    satisfy
    %
    \begin{equation}
        \sup\nolimits_R \| m_R(P) \|_{L^p(M) \to L^p(M)} \sim \| m \big(|D| \big) \|_{L^p(\RR^d) \to L^p(\RR^d)}.
    \end{equation}
    %
%    where the implicit constant depends on $d$, $p$, and the support of $m$.
\end{corollary}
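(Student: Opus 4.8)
The plan is to deduce this transference statement from Theorem \ref{CpVersionOfTheorem} by showing that the quantity $C_p(m)$ is itself comparable to the $L^p(\RR^d) \to L^p(\RR^d)$ operator norm of the radial Fourier multiplier $m(|D|)$, for $m$ supported in a compact subset of $(0,\infty)$. Once this is established, chaining the two equivalences gives $\sup_R \| m_R(P) \|_{L^p \to L^p} \sim C_p(m) \sim \| m(|D|) \|_{L^p \to L^p}$, which is exactly the claim.

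The core of the argument is therefore a Euclidean statement: for $1 < p < 2(d-1)/(d+1)$ and $m$ regulated with compact support in $(0,\infty)$, one has $\| m(|D|) \|_{L^p(\RR^d)\to L^p(\RR^d)} \sim C_p(m)$, with implicit constants depending on the support of $m$. This is precisely the content of the characterization of compactly supported radial Fourier multipliers on $L^p$ in the range $|1/p - 1/2| > 1/(d-1)$, which is known: the lower bound $\| m(|D|)\|_{L^p\to L^p} \gtrsim C_p(m)$ follows by testing against suitable functions (standard Knapp-type or Besov-space lower bounds exploiting that $\widehat{m(|\cdot|)}$ decays like $|\cdot|^{-(d-1)/2}$ times $\widehat m$, so that the $L^p$ bound forces integrability of $\langle t\rangle^{\alpha(p)}|\widehat m(t)|$), while the upper bound $\| m(|D|)\|_{L^p\to L^p} \lesssim C_p(m)$ is the hard classical direction, resolved for this supercritical $p$-range by the work on radial Fourier multipliers (Heo--Nazarov--Seeger, and Nazarov--Seeger in the endpoint form). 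I would cite this characterization as an input, state it cleanly as a lemma specialized to compactly supported $m$, and note that the cosine transform $\widehat m$ appearing in \eqref{definitonOfCp} is the natural object since $m$ is supported away from the origin (so the one-dimensional cosine transform and the $d$-dimensional radial Fourier transform agree up to the $|\cdot|^{-(d-1)/2}$ Bessel asymptotics, modulo rapidly decaying error terms controlled using compact support in $(0,\infty)$).

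The only genuinely new step is assembling these pieces with the correct quantifiers. First I would record the Euclidean characterization as a lemma. Then the proof of the corollary is one line: apply Theorem \ref{CpVersionOfTheorem} to get the left equivalence and the lemma to get the right one, observing that both sets of implicit constants depend only on the support of $m$, which is fixed. I would also remark that the support-dependence can be made explicit via the dyadic decomposition in Remark \ref{ABRemark}, giving $\sup_R\| m_R(P)\|_{L^p\to L^p} \lesssim |\log(B/A)| \sup_j C_p(m_j) \lesssim |\log(B/A)| \sup_j \| m_j(|D|)\|_{L^p\to L^p}$ when $\operatorname{supp} m \subset (A,B)$.

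The main obstacle is not in the logic of the deduction but in locating and correctly invoking the Euclidean radial-multiplier characterization with the precise normalization used here: one must verify that the functional $C_p(m)$ in \eqref{definitonOfCp}, built from the one-variable cosine transform with the weight $\langle t\rangle^{(d-1)(1/p-1/2)}$, matches the Besov-type norm appearing in the radial-multiplier literature (typically phrased via $\| K\|_{L^p(\RR^d,\,|x|^{\,dp/p'}\,dx)}$-type conditions or via the Fourier transform of the convolution kernel on the line $\RR$). For $m$ supported in a fixed compact subset of $(0,\infty)$ this identification is routine — the Bessel function asymptotics $J_{d/2-1}(r)/r^{d/2-1} \sim c\, r^{-(d-1)/2}\cos(r - \theta_d) + O(r^{-(d+1)/2})$ turn the $d$-dimensional radial kernel into the line kernel $\widehat m$ times $|t|^{-(d-1)/2}$ plus an error that is harmless because $m$ vanishes near $0$ — but it must be carried out carefully, since the equivalence of operator norms is only claimed up to the support-dependent constants and one wants those constants to be uniform over translates/dilates of a fixed bump.
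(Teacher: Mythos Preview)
Your proposal is correct and follows essentially the same route as the paper: cite the Heo--Nazarov--Seeger characterization $\| m(|D|) \|_{L^p(\RR^d)\to L^p(\RR^d)} \sim C_p(m)$ for $1 < p < 2(d-1)/(d+1)$, then chain it with Theorem \ref{CpVersionOfTheorem}. The paper's proof is literally those two sentences; your additional discussion of Bessel asymptotics and normalization matching is not needed once you invoke \cite{HeoandNazarovandSeeger} directly, since that result is already stated in terms of the same $C_p(m)$.
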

\begin{proof}
    The main result of \cite{HeoandNazarovandSeeger} shows that, for $1 < p < 2 (d-1)/(d+1)$, and for any function $m: [0,\infty) \to \CC$, the radial Fourier multiplier operator defined in \eqref{FourierMultiplierEquation} satisfies
    \begin{equation} \label{HNSResult}
        \| m \big(|D| \big) \|_{L^p(\RR^d) \to L^p(\RR^d)} \sim C_p(m).
    \end{equation}
    Combining this result with \eqref{aiowjdowadjowai3424234} of Theorem \ref{CpVersionOfTheorem} immediately yields the claim.
\end{proof}

Transference principles that control $L^p$ bounds for Fourier multipliers on $\RR^d$ in terms of $L^p$ bounds for spectral multipliers on a manifold are classical. An essentially complete result was proved in a 1974 result of Mitjagin \cite{Mitjagin}, which states that for all $p \in [1,\infty]$, for any compact manifold $M$, and for any regulated function $m: [0,\infty) \to \CC$,
\begin{equation} \label{mitjagininequalityforlaplacian}
    \sup\nolimits_R \big\| m_R \big( P \big) \big\|_{L^p(M) \to L^p(M)} \gtrsim \big\| m \big( |D| \big) \big\|_{L^p(\RR^d) \to L^p(\RR^d)}.
\end{equation}
On the other hand, Corollary \ref{maintheoremprime} is the first result in the literature to obtain a transference principle in the opposite direction, i.e. controlling $L^p$ operator bounds for spectral multipliers on a manifold in terms of $L^p$ operator bounds for Fourier multipliers on $\RR^d$. No such principle has been proved for any $p \neq 2$ and any elliptic, self-adjoint elliptic pseudo-differential operator $P$ on a compact manifold.\footnote[1]{In making this statement we excuse the relatively simple case where $M = \TT^d$ is a flat torus and $P = \sqrt{-\Delta}$. Spectral multipliers for this $P$ are Fourier multipliers for the Fourier series on $\TT^d$, and so Poisson summation methods can be applied to yield a relatively simple proof of a transference bound for all $p \in [1,\infty)$. See Theorem 3.6.7 of \cite{Grafakos} for more details.}

Obtaining transference principles controlling $L^p$ bounds for Fourier multipliers in terms of $L^p$ bounds on spectral multipliers is heuristically simpler to establish than the reverse. This is best shown in the model case where $M$ is a Riemannian manifold and the principal symbol of $P$ is equal to $|\xi|_x$, the induced norm on $T^* M$ from the Riemannian metric on $M$. We can write $m_R(P) = m(P_R)$, where $P_R = P / R$ is a pseudodifferential operator whose principal symbol is the norm on $T^* M$ associated with the \emph{dilated metric} $g_R = R^2 g$. As $R \to \infty$, the metric $g_R$ gives the manifold $M$ less curvature and more volume, and so we might expect $M$ equipped with $P$ to behave more and more like $\RR^d$ equipped with the analogous operator $|D| = \sqrt{-\Delta}$. Indeed, Mitjagin's proof, very roughly speaking, consists of showing that, in coordinates, and with a limit taken in an appropriate sense,
\begin{equation} \label{limitingmitjaginequation}
    \lim\nolimits_{R \to \infty} m_R(P) = m \big(|D| \big).
\end{equation}
Taking operator norms on both sides of \eqref{limitingmitjaginequation} yields inequality \eqref{mitjagininequalityforlaplacian}.

On the other hand, it seems more difficult to obtain the converse inequality
\begin{equation} \label{upperboundmainresult}
    \sup\nolimits_R \big\| m_R(P) \big\|_{L^p(M) \to L^p(M)} \lesssim \big\| m \big(|D| \big) \big\|_{L^p(\RR^d) \to L^p(\RR^d)}
\end{equation}
This should be expected, since unlike \eqref{mitjagininequalityforlaplacian}, it is not immediately geometrically intuitive why an $L^p$ bound for a Fourier multiplier on a flat space should imply uniform $L^p$ bounds for a family of spectral multipliers on curved spaces.  Moreover, recent results on the failure of the Kakeya conjecture in three dimensional manifolds with non-constant sectional curvature \cite{DaiGongGuoZhang} provide weak evidence that the range of $L^p$ spaces to which inequality \eqref{upperboundmainresult} can be obtained may be more sensitive to the geometry of a manifold $M$ than the range of $L^p$ spaces under which \eqref{mitjagininequalityforlaplacian} holds. Obtaining bounds of the form \eqref{upperboundmainresult} therefore requires a more subtle analysis than required to obtain bounds of the form \eqref{mitjagininequalityforlaplacian}.

\subsection{Applications To Other Pseudodifferential Operators}

The assumptions of Theorem \ref{CpVersionOfTheorem} are relatively restrictive, but there are several other settings than the context of Theorem \ref{zonalconvolutionapplication} where Theorem \ref{CpVersionOfTheorem} implies new results. Recalling the proof of Theorem \ref{zonalconvolutionapplication} from Theorem \ref{CpVersionOfTheorem}, we analyzed the operator
\begin{equation} \label{periodicshiftequation}
    P = (a^2 - \Delta)^{1/2} - a
\end{equation}
on $S^d$, where $a = (d-1)/2$ and $\Delta$ is the Laplace-Beltrami operator on $S^d$. A similar construction works on the analysis of Laplace-Beltrami operators on projective spaces over any normed division algebra. Namely, if $M$ is such a projective space, then the eigenvalues of the Laplace-Beltrami operator on $M$ are quadratic functions with integer coefficients, and so we can find $a \in \QQ$ such that if we interpret \eqref{periodicshiftequation} as defining an operator on $M$, then $\Lambda_P = \NN$. Theorem \ref{CpVersionOfTheorem} then applies to $P$. The table below gives the choice of $a$ for each of these projective spaces.

\vspace{0.3em}
\begin{center}
\begin{tabular}{|c|c|c|}
    \hline
    Space & Eigenvalues of $\Delta$ & Choice of $a$\\
    \hline
    $\RR \PP^d$ & $-2k(2k+d-1)$ & $(d-1)/2$ \\
    $\CC \PP^d$ & $-4k(k+d)$ & $d$ \\
    $\HH \PP^d$ & $-4k(k+2d+1)$ & $2d+1$ \\
    $\mathbb{O} \PP^2$ & $-4k(k+11)$ & $11$ \\
    \hline
\end{tabular}
\end{center}
\vspace{0.3em} The spaces $\mathbb{S}^d$, $\RR \PP^d$, $\CC \PP^d$, $\HH \PP^d$, and $\mathbb{O} \PP^2$ give all of the \emph{compact rank one symmetric spaces} (see Chapter 3 of \cite{Besse} for a definition and classification). Thus Theorem \ref{CpVersionOfTheorem} can be applied to obtain bounds for explicit perturbations of the Laplace-Beltrami operator on all such spaces. One can think of the multiplier operators for such a perturbation as multiplier operators for expansions of functions for analogues of spherical harmonics on the other rank one symmetric spaces.

In the examples above, the spectral multipliers for the operators $P$ above are really just reparameterizations of spectral multipliers of $\sqrt{-\Delta}$, and so Theorem \ref{CpVersionOfTheorem} gives new bounds for multipliers of the Laplacian. More precisely, if $P = \sqrt{a^2 - \Delta} - a$ for some constant $a > 0$, then $\sqrt{-\Delta} = \phi(P)$, where $\phi(\lambda) = \sqrt{\lambda^2 + 2 a \lambda}$. Thus if $m: [0,\infty) \to \CC$ is a regulated function compactly supported in $(0,\infty)$, then Theorem \ref{CpVersionOfTheorem} implies
\begin{equation} \label{LaplacianResult}
    \| m( \sqrt{-\Delta} ) \|_{L^p(M) \to L^p(M)} = \| (m \circ \phi)(P) \|_{L^p(M) \to L^p(M)} \lesssim \sup\nolimits_j C_p( (m \circ \phi)_j ).
\end{equation}
In this case simple estimates can be used to obtain $L^p$ boundedness using the compact support of $m$. Namely, if $m$ is supported on $(A,B)$, then the triangle inequality, Weyl Law, and $L^p$ estimates for eigenfunctions tell us that
%
% B^d times | f_k |_{L^p}
\begin{equation} \label{trivialbound}
    \| m( \sqrt{-\Delta} ) \|_{L^p(M) \to L^p(M)} \lesssim B^{\delta(p)} \| m \|_{L^\infty[0,\infty)},
\end{equation}
for an exponent $\delta(p) > 0$ (see Lemma \ref{lowjLemma} for more details along these lines). Thus $C_p(m)$ need not be finite in order for $m( \sqrt{-\Delta} )$ to be bounded. However, if $B$ is large \eqref{LaplacianResult} likely gives more efficient upper bounds. Using Remark \ref{ABRemark}, we obtain
\begin{equation} \label{aodijaowidjwaio}
    \| m( \sqrt{-\Delta} ) \|_{L^p(M) \to L^p(M)} \lesssim |\log(B/A)|\; \sup\nolimits_j C_p(m_j),
\end{equation}
where $m_j(\lambda) = \chi(\lambda) m(\phi(2^j \lambda))$, which is a much better bound than \eqref{trivialbound} if $B$ is large relative to $\sup\nolimits_j C_p(m_j)$. Moreover, in a follow up paper, we hope to obtain \eqref{FutureEquation}, and thus a bound independent of $A$ and $B$ if $\sup\nolimits_j C_p(m_j) < \infty$. 

On the other hand, Theorem \ref{CpVersionOfTheorem} \emph{cannot} characterize those functions $m$ such that the spectral multipliers $m_R( \sqrt{-\Delta} )$ are uniformly bounded on $L^p(M)$, and this problem remains open on all compact Riemannian manifolds $M \neq \TT^d$ for $p \neq 2$. Theorem \ref{CpVersionOfTheorem} allows us to characterize when families of operators of the form $m( \phi(P/R) )$ are bounded, but the operators $m_R \big( \sqrt{-\Delta} \big) = m( \phi(P) / R )$ are \emph{not} of this form. If $m_{R,\phi}(\lambda) = m( \phi( R \lambda) / R)$ then we can use Theorem \ref{CpVersionOfTheorem} to conclude that
\begin{equation}
\begin{split}
    \sup\nolimits_R \| m_R \big( \sqrt{-\Delta} \big) \|_{L^p(M) \to L^p(M)} &= \sup\nolimits_R \| m_{R,\phi}(P) \|_{L^p(M) \to L^p(M)}\\
    &\lesssim \sup\nolimits_R C_p(m_{R,\phi}).
\end{split}
\end{equation}
However, $\sup\nolimits_R C_p(m_{R,\phi})$ is in general incomparable to $C_p(m)$, and it is not immediately apparent whether it is necessary for $\sup\nolimits_R C_p(m_{R,\phi})$ to be finite in order for the operators $m_R(\sqrt{-\Delta})$ to be uniformly bounded, or whether having $C_p(m)$ finite suffices, as has been verified in the special case of zonal input functions \cite{Alladi}.

There is one other class of operators to Theorem \ref{CpVersionOfTheorem} applies, obtained by considering Hamiltonian flows associated with pseudodifferential operators. Let $P_0$ be any classical elliptic self-adjoint pseudodifferential operator of order one on a compact manifold $M$, with principal symbol $p: T^* M \to [0,\infty)$. Then $p$ generates a Hamiltonian flow $\{ \alpha_t \}$ on $T^* M$ given in coordinates by the equations
\begin{equation} \label{qoiJIOJdoiwajfoiawjfoi}
    x'(t) = (\partial_\xi p)(x,\xi) \quad\text{and}\quad \xi'(t) = - (\partial_x p)(x,\xi).
\end{equation}
If $\Lambda_{P_0}$ is contained in an arithmetic progression $\{ \lambda_0 + k / \Pi : k \geq 0 \}$ for some $\Pi > 0$, then this flow must be periodic, of period $\Pi$ (this can be seen by analyzing the wavefront sets of the propagators $e^{2 \pi i t P_0}$ via the Propogation of Singularities Theorem). %Indeed, this follows from the fact that the wavefront set of the kernel of the wave propagator $e^{2 \pi i t P}$ is given by $\{ ( \alpha_t(\xi), \xi ) : \xi \in T^*M \}$, and that $e^{2 \pi i \Pi P}$ is a multiple of the identity operator, and thus the wavefront set of it's kernel is equal to $\{ (\xi,\xi) : \xi \in T^*M \}$.
The converse of this argument is also true, modulo a perturbation. Namely, if $P_0$ is an operator with constant sub-principal symbol, and it's corresponding Hamiltonian flow is periodic with period $\Pi$, then Lemma 29.2.1 of \cite{Hormander4} implies that there is $\lambda \in \RR$, and an operator $P$ commuting with $P_0$, such that the eigenvalues of $P$ are of the form $\{ k \Pi^{-1} : k \geq 0 \}$, and $P - (P_0 + \lambda I )$ is a pseudodifferential operator of order $-1$. If in addition, the cospheres associated with the principal symbol $p$ are ellipses, then $P$ satisfies all assumptions of Theorem \ref{CpVersionOfTheorem}, and so Theorem \ref{CpVersionOfTheorem} implies $L^p$ bounds for multipliers of $P$.

The primary examples of operators to which the above method applies are of the form $P_0 = \sqrt{-\Delta}$, where $\Delta$ is the Laplace-Beltrami operator on a compact Riemannian manifold $M$ with periodic geodesic flow.\footnote[1]{In a certain generalized sense, pertubations of Laplace-Beltrami operators are the \emph{only} operators to which we can apply Theorem \ref{CpVersionOfTheorem}. If $p: T^* M \to [0,\infty)$ is the principal symbol of an operator $P$ and has a cosphere with non-vanishing Gaussian curvature, then $p$ induces a Finsler metric on $M$, such that the Hamiltonian flow induced by $p$ is the geodesic flow for the Finsler metric (see Section \ref{estimatesforwavepackets} for more details). There is no canonical Laplace-Beltrami operator on a Finsler manifold, but the operator $P$ might be thought of as an analogue of $\sqrt{-\Delta}$ on $M$, at least up to a lower order pertubation.} The subprincipal symbol of $P_0$ is then always zero, so one may apply the results of the previous paragraph to perturb $P_0$ to obtain an operator $P$ to which Theorem \ref{CpVersionOfTheorem} applies. The only currently known examples of Riemannian manifolds with periodic geodesic flow other than the compact rank one symmetric spaces, which we have already encountered, are the \emph{Zoll manifolds for the sphere}, an infinite dimensional family of manifolds which are diffeomorphic to the sphere, but with a perturbed metric so that the manifolds are not \emph{isometric} to the sphere (see Chapter 4 of \cite{Besse} for more details). Zoll manifolds provide examples of manifolds $M$ and operators $P$ to which Theorem \ref{CpVersionOfTheorem} applies, but for which $M$ lacks a large symmetry group under which the operator $P$ is invariant. For a generic Zoll manifold there need not be any such symmetries.

One difference with this more general method than with the compact rank one symmetric spaces is that the class of spectral multipliers of $P_0$ is not necessarily equal to the class of spectral multipliers of $P$, and so Theorem \ref{CpVersionOfTheorem} cannot be applied to analyze all multipliers of $P_0$. Indeed, our assumptions do imply that the eigenvalues of $P_0$ occur in clusters contained in intervals of the form $I_k = [k \Pi^{-1} - C / k, k \Pi^{-1} + C/k]$ for some $C > 0$, but there can be multiple eigenvalues in this interval, and the eigenspaces of these eigenvalues will be identified when studying the operator $P$. One notable exception where such a situation does not occur is if each interval $I_k$ only contains a single eigenvalue (necessarily of high multiplicity) for suitably large $k$. Such manifolds are called \emph{maximally degenerate Zoll manifolds} \cite{Zelditch}, and then an analogue of \eqref{LaplacianResult} applies.

\subsection{Related Results} Boundedness of multipliers for spherical harmonic expansions have been studied throughout the 20th century. Classical methods involve the analysis of special functions and orthogonal polynomials, culminating in Bonami and Clerc's extension of the Marcinkiewicz multiplier theorem to multipliers for spherical harmonic expansions \cite{BonamiClerc}. Methods changed in the 1960s, when H\"{o}rmander introduced the powerful method of Fourier integral operators, which he used to obtain $L^p$ bounds for Bochner-Riesz multiplier expansions for spherical harmonics \cite{HormanderRiesz}, with later developments on $S^d$ and on other compact manifolds by Sogge \cite{SoggeSphericalHarmonics,SoggeSpectralClusters,SoggeRieszMeans}, Christ and Sogge \cite{ChristandSogge}, Seeger and Sogge \cite{SeegerSoggeBochnerRiesz}, Seeger \cite{SeegerEndpointMultipliers}, Tao \cite{Tao}, and Kim \cite{KimSpectral}. None of these results were able to \emph{completely} characterize the uniform $L^p$ boundedness of the multipliers $\{ T_R \}$, though the result of \cite{KimSpectral} is closest, characterizing boundedness at the level of Besov spaces, which we discuss in more detail. Our methods are heavily inspired by recent results on characterization of $L^p$ boundedness for radial Fourier multiplier operators on $\RR^d$ \cite{GarrigosandSeeger,HeoandNazarovandSeeger,Cladek,KimQuasiradial}, especially \cite{HeoandNazarovandSeeger}, which might be expected since the transference principle of \cite{Mitjagin} shows the results of \cite{HeoandNazarovandSeeger} are implied by the results of this paper.

In the remainder of this section, we compare and contrast Theorem \ref{CpVersionOfTheorem} with the result of Kim \cite{KimSpectral}. For a classical elliptic self-adjoint pseudo-differential operator $P$ of order one on a manifold $M$ whose principal symbol has cospheres with nonvanishing Gaussian curvature, but not necessarily satisfying the periodic spectrum assumption of Theorem \ref{CpVersionOfTheorem}, \cite{KimSpectral} shows that in the larger range $1 < p < 2(d+1)/(d+3)$, and for any regulated function $m$ with compact support in $(0,\infty)$,
\begin{equation} \label{KimResult123}
    \sup\nolimits_{R > 0} \| m_R(P) \|_{L^p(M) \to L^p(M)} \lesssim \| m \|_{L^p[0,\infty)} + D_p(m),
\end{equation}
where
% 1/p > (d+3)/ 2(d+1)
% alpha degress of smothness
% sum_j [2^{j alpha(p)} |L_j m|_{L^2}]^p
\begin{equation}
    D_p(m) = \bigg( \sum\nolimits_{j \in \ZZ} \bigg( \int_{2^j}^{2^{j+1}} \Big[ \langle t \rangle^{\alpha(p) + |1/p - 1/2|} |\widehat{m}(t)| \Big]^2\; dt \bigg)^{p/2}\; dt \bigg)^{1/p}.
\end{equation}
%
% | A_j m^ |_{l^p_j L^2}
%
% | sum_{j <= 0} |A_j m^| |_{L^2}
%   = ( sum_{j <= 0} |A_j m^|_{L^2}^2 )^{1/2}
%
%   << ( sum_{j = 0} |A_j m^|_{L^2}^{p_*} )^{1/p_*}
%
%   << WHAT WE HAVE
%
%
% Conversely,
%
% ( sum_{j <= 0} |A_j m^|_{L^2}^{p_*} )^{1/p_*}
%
%  <<  ( sum_{j <= 0} 2^{jdp_*/2} |chi m^|_{L^\infty}^{p_*} )^{1/p_*}
%  <<  ( sum_{j <= 0} 2^{jdp_*/2} |chi m^|_{W^{d/2,2}} )
%  <<  ( sum_{j <= 0} 2^{jdp_*/2} |m^|_{L^2} )
%
%   |A_j m^|_{L^2} << 2^{jd/2} |m^|_{L^infty} << 2^{jd/2} |m^|_{W^{}}
%
% W^{l,q} << W^{k,p}
% W^{0,infty} << W^{k,}
% 1/p - k/d = 0
% 1/p = k/d
% p = d/k
One has
\begin{equation} \label{AWOIDJWAOIFJWAOI34109412094319023}
    C_p(m) \lesssim \| m \|_{L^p[0,\infty)} + D_p(m)
\end{equation}
so that Theorem \ref{CpVersionOfTheorem} gives a stronger bound than than \eqref{KimResult123} in situations where both apply. We can obtain \eqref{AWOIDJWAOIFJWAOI34109412094319023} by applying H\"{o}lder's inequality for each $j$. However, a more intuitive, but less rigorous explanation follows from noticing that $D_p(m) < \infty$, roughly speaking, guarantees that $m$ has $\alpha(p) + |1/p - 1/2|$ derivatives in $L^2$, whereas $C_p(m) < \infty$, even more roughly speaking, says that $m$ has $\alpha(p)$ derivatives in $L^{p'}$. So one might expect a bound of the form \eqref{AWOIDJWAOIFJWAOI34109412094319023} by Sobolev embedding heuristics.

%An explicit example of a function to which our result applies to give $L^p$ boundedness, whereas \cite{KimSpectral} gives no result, is the highly oscillatory function
%
%\begin{equation} \label{badmultiplier}
%    m_s(\lambda) = \left( \sum_{j = 1}^\infty 2^{-js} e^{- 2 \pi i (2^j \lambda)} \right) \psi(\lambda),
%\end{equation}
%
%where $\psi \in C_c^\infty(\RR)$ is nonzero, which satisfies $D_p(m) = \infty$ and $C_p(m) < \infty$ for
%
%\[ \alpha(p) < s \leq \alpha(p) + |1/p - 1/2|. \]
%Notice that the smaller $|1/p - 1/2|$ is, the less smooth the function $m$ becomes (we will later see in Section \ref{PrelimSetup} that if $C_p(m) < \infty$, then $m$ is $\varepsilon(p)$ H\"{o}lder continuous, but where $\varepsilon(p) \to 0$ as $|1/p - 1/2|$ converges to $1/2d$ from above).

Kim's result follows some similar strategies to our paper. But the stronger $L^2$ estimates on the multiplier allow one to reduce the large time behavior of the wave equation to $L^2$ orthogonality and discrete analogues of the Tomas-Stein restriction theorem originally proved in \cite{SoggeSpectralClusters}, which for $1 \leq p < 2(d-1)/(d+1)$, are of the form
% 2(d+1)/(d+3)
% 1 - (d+3)/2(d+1)
% [d - 1] / 2(d+1)
% d/2(1/p - 1/p') - 1/2
% d(1/p - 1/2) - 1/2
% alpha(p) - 1/p
\begin{equation}
    \big\| \mathbb{I}_{[k,k+1]}(P) \big\|_{L^2(M) \to L^p(M)} \lesssim k^{\alpha(p) - 1/p'}.
\end{equation}
% 2(d+1)/(d+3)
%
% 1/p = 1/(d+1)
% chi( P - k )
%
% This is a rescaling of
%
% chi( k( P - 1) )
% 
% Then it's Fourier transform is
%
% e^{-2 pi i k t} k^{-1} chi^(t/k)
%
% k^{-1} ( int [ <t>^s |chi^(t/k)| ]^p dt )^{1/p}
%
% k^{-1} + k^{1/p-1+s}
%           1/p - 1 + (d-1)(1/p - 1/2)
%           d|1/p - 1/2| - 1/2
%
This method generalizes a technique previously applied to the study of Bochner-Riesz multipliers \cite{SoggeRieszMeans}. We are unable to apply this method in the proof of Theorem \ref{CpVersionOfTheorem}; reducing $L^p$ norms to $L^2$ norms in order to apply the Stein-Tomas theorem is precisely what causes a loss of $|1/p - 1/2|$ derivatives required for $D_p(m)$ to be finite, when compared to the finiteness of $C_p(m)$. One can view the calculations in Section \ref{regime2finalsection} of this paper, which reduce large time estimates of the half-wave equation to local smoothing bounds, as a substitute for this reduction, which avoids the direct use of a reduction to $L^2$ estimates. But these bounds require a more robust argument, which requires a greater understanding of solutions to the half-wave equation for large times, and thus we are forced to make the stricter periodic spectrum assumptions.

\subsection{Summary of Proof} To conclude the introductory sections of the paper, we summarize the various methods which occur in the proof of Theorem \ref{CpVersionOfTheorem}. In Lemma \ref{lowerboundlemma} of Section \ref{PrelimSetup}, we prove that
\begin{equation} \label{nonoppositeinequality}
    \sup\nolimits_R \| m_R(P) \|_{L^p(M) \to L^p(M)} \gtrsim C_p(m).
\end{equation}
We obtain \eqref{nonoppositeinequality} via a relatively simple argument, as might be expected from the discussion in Section \ref{transference}. The main difficulty is obtaining the opposite inequality
\begin{equation} \label{oppositeinequality}
    \sup\nolimits_R \| m_R(P) \|_{L^p(M) \to L^p(M)} \lesssim C_p(m).
\end{equation}
Unlike in the study of Euclidean multipliers, the behavior of the multipliers $m_R(P)$ on a \emph{compact} manifold for $R \lesssim 1$ is relatively benign. In Lemma \ref{lowjLemma} of Section \ref{PrelimSetup} that same section, another simple argument shows
\begin{equation} \label{TrivialLowFrequencyBound}
    \sup\nolimits_{R \leq 1} \left\| m_R(P) \right\|_{L^p(M) \to L^p(M)} \lesssim C_p(m).
\end{equation}
In that section we also setup notation for the rest of the paper. As a consequence of Proposition \ref{TjbLemma}, stated at the end of Section \ref{PrelimSetup},% but proved throughout Sections \ref{estimatesforwavepackets}, \ref{regime1firstsection} and \ref{regime2finalsection},
we can conclude that
\begin{equation} \label{dyadicMainReulst}
    \sup\nolimits_{R \geq 1} \| m_R(P) \|_{L^p(M) \to L^p(M)} \lesssim C_p(m),
\end{equation}
The upper bound \eqref{dyadicMainReulst} requires a more in depth analysis than \eqref{TrivialLowFrequencyBound}. Using the fact that $m$ is regulated, %so that the Fourier inversion formula can be interpreted in a pointwise sense,
we apply the Fourier inversion formula to write
\begin{equation}
    m_R(P) = \int_{-\infty}^\infty R \widehat{m}(Rt) e^{2 \pi i t P}\; dt,
\end{equation}
where $e^{2 \pi i t P}$ are wave propogators, which as $t$ varies, give 
%
%\[ e^{2 \pi i t P} = \sum\nolimits_\lambda e^{2 \pi i t \lambda} \mathcal{Q}_\lambda \]
%
solutions to the half-wave equation $\partial_t = 2 \pi i P$ on $M$. Studying $m_R(P)$ thus reduces to studying certain 'weighted averages' of the propagators $\{ e^{2 \pi i t P} \}$. We prove Proposition \ref{TjbLemma} using several new estimates for understanding these averages, including:
\begin{itemize}
    \item[(A)] Quasi-orthogonality estimates for averages of solutions to the half-wave equation on $M$, discussed in Section \ref{estimatesforwavepackets}, which arise from a connection between the theory of pseudodifferential operators whose principal symbols satisfy the curvature condition of Theorem \ref{CpVersionOfTheorem}, and Finsler metrics on the manifold $M$.

    \item[(B)] Variants of the density-decomposition arguments first used in \cite{HeoandNazarovandSeeger}, described in Section \ref{regime1firstsection}, which apply the quasi-orthogonality estimates obtained in Section \ref{estimatesforwavepackets} with a geometric argument which controls the `small time behavior' of solutions to the half-wave equation.

    \item[(C)] A new strategy to reduce the `large time behavior' of the half-wave equation to an endpoint local smoothing inequality for the half-wave equation on $M$, described in Section \ref{regime2finalsection}.
\end{itemize}
Equations \eqref{TrivialLowFrequencyBound} and \eqref{dyadicMainReulst} immediately imply \eqref{oppositeinequality}, which together with \eqref{nonoppositeinequality} completes the proof of Theorem \ref{CpVersionOfTheorem}.

\section{Preliminary Setup} \label{PrelimSetup}

%
%
%\[ M_R = 2 \int_0^\infty R \widehat{h}(Rt) \cos(2 \pi t P)\; dt, \]
%
%where
%
%\[ \cos(2 \pi t P) = \sum_\lambda \cos(2 \pi t \lambda) \mathcal{P}_\lambda \]
%
%is the Fourier multiplier giving solutions to the wave equation
%
%\[ \partial_t^2 u + (2 \pi P)^2 = 0. \]
%
%
%
%We begin by listing some fixed time bounds for the propogators $\{ e^{2 \pi i t P} \}$. The propogators are unitary, so for $t \in \RR$,
%
%\[ \| e^{2 \pi i t P} f \|_{L^2(S^d)} = \| f \|_{L^2(S^d)}. \]
%
%For $1 < p < \infty$, we have
%
%\[ \| e^{2 \pi i t P} f \|_{L^p(S^d)} \lesssim_{p,t_0} \| f \|_{L^p_{s_p}(S^d)} \quad\text{uniformly for $t > 0$}, \]
%
%which follows from Corollary 6.2.3 of \cite{Sogge}, and the periodicity of the wave propogators $\{ e^{2 \pi i t P} \}$.
%

We now begin with the details of the proof of Theorem \ref{CpVersionOfTheorem}, following the path laid out at the end of the introduction. To begin with, we prove the lower bound \eqref{nonoppositeinequality}.

\begin{lemma} \label{lowerboundlemma}
    Suppose $M$ is a compact manifold of dimension $d$, and $P$ is a classical elliptic self-adjoint pseudodifferential operator of order one on $M$ satisfying the assumptions of Theorem \ref{CpVersionOfTheorem}. Then for $1 < p < 2(d-1)/(d+1)$, and for any regulated function $m: [0,\infty) \to \CC$,
    \begin{equation}
        \sup\nolimits_R \| m_R(P) \|_{L^p(M) \to L^p(M)} \gtrsim C_p(m).
    \end{equation}
\end{lemma}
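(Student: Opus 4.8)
The plan is to deduce the lower bound from a transference argument of Mitjagin type, combined with the known sharp lower bounds for Euclidean multipliers attached to a norm with curved unit sphere. Fix a point $x_0 \in M$ and a coordinate chart $\kappa : U \to \RR^d$ with $\kappa(x_0) = 0$, chosen so that in these coordinates the volume density of $M$ equals Lebesgue measure at $x_0$. Let $p_0(\eta) = p(x_0,\eta)$ denote the principal symbol of $P$ frozen at $x_0$, and let $m(p_0(D))$ be the Fourier multiplier operator on $\RR^d$ with symbol $\eta \mapsto m(p_0(\eta))$. The first step is to show
\begin{equation}
    \sup\nolimits_R \| m_R(P) \|_{L^p(M) \to L^p(M)} \gtrsim \sup \frac{\| m(p_0(D)) g \|_{L^p(\RR^d)}}{\| g \|_{L^p(\RR^d)}},
\end{equation}
where the supremum on the right is over nonzero Schwartz functions $g$ whose Fourier transform is supported in a fixed compact subset of $\RR^d \setminus \{0\}$.

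To prove this, fix such a $g$ and a cutoff $\chi \in C_c^\infty(U)$ equal to $1$ near $x_0$, and for large $R$ set $g_R = \chi \cdot (g \circ R\kappa)$, so that $\| g_R \|_{L^p(M)} = (1 + o(1)) R^{-d/p} \| g \|_{L^p(\RR^d)}$ and $\widehat{g_R}$ concentrates at frequencies of size $\sim R$. Since $m$ is regulated, $m_R(P) = \int_{\RR} \widehat{m}(s)\, e^{2\pi i (s/R) P}\, ds$. On the frequency band relevant to $g_R$ the propagator $e^{2\pi i (s/R) P}$ is, for $s$ in any fixed compact set, described by the Lax--H\"{o}rmander parametrix; since it transports wave packets a Finsler distance $O(1/R)$, in the rescaled variable $y = R\kappa(x)$ it converges as $R \to \infty$ to the constant-coefficient propagator $e^{2\pi i s\, p_0(D)}$. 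Integrating against $\widehat{m}$ and undoing the rescaling yields
\begin{equation}
    m_R(P) g_R = \chi \cdot \big[ (m(p_0(D)) g) \circ R\kappa \big] + o\big( R^{-d/p} \big)
\end{equation}
in $L^p(M)$ as $R \to \infty$; taking norms, dividing by $\| g_R \|_{L^p(M)}$, and letting $R \to \infty$ gives the claimed inequality after a supremum over $g$.

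It remains to bound the right-hand side below by $C_p(m)$. By assumption (1) of Theorem \ref{CpVersionOfTheorem}, the cosphere $S^*_{x_0} = \{ \eta : p_0(\eta) = 1 \}$ is a hypersurface of non-vanishing Gaussian curvature, so by Hadamard's ovaloid theorem it bounds a convex body and $p_0$ is a norm on $T^*_{x_0} M \cong \RR^d$ whose unit sphere has non-vanishing curvature. Consequently the Fourier transform of the surface measure on $S^*_{x_0}$ decays to order $(d-1)/2$, exactly as for the Euclidean sphere, and the lower bound of Heo--Nazarov--Seeger \cite{HeoandNazarovandSeeger}, in the form valid for norms with curved unit spheres established in \cite{Cladek, KimQuasiradial}, gives for $1 < p < 2(d-1)/(d+1)$ that $\| m(p_0(D)) g \|_{L^p} \gtrsim C_p(m) \| g \|_{L^p}$ for a suitable $g$ with compactly supported Fourier transform away from the origin; here the exponent $\alpha(p) = (d-1)(1/p - 1/2)$ in \eqref{definitonOfCp} is precisely the order of decay of this surface measure. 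Combining with the first step proves the lemma. (Equivalently, one may avoid the reference entirely and test $m_R(P)$ directly against a bump of width $1/R$ centered at $x_0$, reading off $C_p(m)$ from the size of the resulting wave on the annulus $1/R \lesssim d(x, x_0) \lesssim 1$; the curvature of $S^*_{x_0}$ enters through stationary phase in the $(d-1)$ angular variables.)

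The main obstacle is the rigorous justification of the transference step: one must control the contribution of the subprincipal and lower-order parts of $P$ (which perturb the symbol of $P/R$ by $O(1/R)$ and vanish in the limit), the spatial variation of $p(x,\eta)$ across the $O(1/R)$-ball about $x_0$ (again $O(1/R)$), and the passage between $L^p(M)$ and $L^p(\RR^d)$ norms through the chart and the volume density. None of this is deep, but it requires the Lax--H\"{o}rmander parametrix with uniform control over the shrinking time interval on which it is used; in particular the periodicity hypothesis (2) of Theorem \ref{CpVersionOfTheorem} is not needed here, since only small-time behavior of the propagator enters.
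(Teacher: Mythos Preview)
Your proposal is correct and follows essentially the same two-step route as the paper: first a Mitjagin-type transference to the frozen-coefficient quasiradial multiplier $m(p(x_0,D))$ on $\RR^d$, then the sharp lower bound $\|m(p(x_0,D))\|_{L^p\to L^p}\gtrsim C_p(m)$ from \cite{KimQuasiradial}. The paper simply cites \cite{Mitjagin} for the first step, whereas you sketch the rescaling/parametrix argument behind it; your observation that only small-time behavior is needed (so assumption (2) plays no role) is also correct and implicit in the paper's citation.
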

\begin{proof}
Define the quasiradial Fourier multiplier $m( p(x_0,D) )$ on $\RR^d$ by
\begin{equation}
    m( p(x_0,D) ) f(x) = \int_{\RR^d} m(p(x_0,\xi)) \widehat{f}(\xi) e^{2 \pi i \xi \cdot x}\; d\xi.
\end{equation}
A result of Mitjagin \cite{Mitjagin} %\footnote{Mitjagin's original proof in \cite{Mitjagin} was published in German. See \cite{KenigStantonTomas} for a proof of the inequality written in the English mathematics literature.}
states that for any regulated $m$, and any $x_0 \in M$,
\begin{equation}
\label{mitjagininequality}
    \sup\nolimits_R \big\| m_R(P) \big\|_{L^p(M) \to L^p(M)} \gtrsim \big\| m \big( p(x_0,D) \big) \big\|_{L^p(\RR^d) \to L^p(\RR^d)}.
\end{equation}
Theorem 1.1 of \cite{KimQuasiradial} implies that
\begin{equation} \label{kiminequality}
    \big\| m \big( p(x_0,D) \big) \big\|_{L^p(\RR^d) \to L^p(\RR^d)} \sim C_p(m) \quad\text{and}\quad \big\| m\big( |D| \big) \big\|_{L^p(\RR^d) \to L^p(\RR^d)} \sim C_p(m).
\end{equation}
Putting together \eqref{mitjagininequality} and \eqref{kiminequality} then proves the claim. %, and \eqref{heonasainequality} proves the claim.
\end{proof}

In the remainder of the article, we focus on proving the upper bound \eqref{oppositeinequality}. For this purpose, we may assume without loss of generality that $\text{supp}(m) \subset [1,2]$. Let us briefly justify why. 
 Suppose we have proved
\begin{equation} \label{specialcasedyadicCp}
    \sup\nolimits_R \| m_R(P) \|_{L^p \to L^p} \sim C_p(m) \quad\text{if $\text{supp}(m) \subset [1,2]$.}
\end{equation}
Let $m$ be an arbitrary function whose support is a compact subset of $(0,\infty)$. Fix $\chi \in C_c^\infty(0,\infty)$ with $\text{supp}(\chi) \subset [1,2]$ and with $\sum \chi( \cdot / 2^j ) = 1$. If we set $m_j(\lambda) = \chi(\lambda) m(2^j \lambda)$, then $\text{supp}(m_j) \subset [1,2]$ for all $j$, and $m(\lambda) = \sum m_j(\lambda/2^j)$. Then
\begin{equation}
    \widehat{m}_j(t) = \widehat{\chi} * 2^{-j} \widehat{m}( \cdot / 2^j ),
\end{equation}
%
% ( int_t [<t>^a | int_s chi^(s) 2^{-j} m^((t-s)/2^j) |]^p )^{1/p}
%   << int_s ( int_t | <t>^a chi^(s) 2^{-j} m^((t-s)/2^j) |^p )^{1/p}
%   = 2^j int_s chi^(2^j s) ( int_t | <2^j t>^a m^(t-s) |^p )^{1/p}
%   = 2^j int_s chi^(2^j s) ( int_t |[<2^j t>^a / <t-s>^a ] <t - s>^a m^(t-s)|^p )^{1/p}
%   = 2^j int_s chi^(2^j s) [max_t <2^j t>^a / <t - s>^a] C_p(m)
%   << C_p(m) 2^j int_s chi^(2^j s) 2^{ja} <s>^a
%   << C_p(m) 2^{ja}
%
% s = 0: <2^j t>^a / <t>^a      2^{ja}
% Works also for s <= 1
%       For s >= 1      (2^j s)^a
%   = C_p(m) 2^j int_s chi^(2^j s)
and the rapid decay of $\widehat{\chi}$ implies that $C_p(m_j) \lesssim_j C_p(m)$. Thus \eqref{specialcasedyadicCp} implies
\begin{equation}
\begin{split}
    &\sup\nolimits_R \| m_j(P/2^j R) \|_{L^p(M) \to L^p(M)}\\
    &\quad\quad= \sup\nolimits_R \| m_j(P/R) \|_{L^p(M) \to L^p(M)} \lesssim_j C_p(m_j) \lesssim C_p(m).
\end{split}
\end{equation}
Now the triangle inequality, summing over finitely many $j$, implies
\begin{equation}
    \sup\nolimits_R \| m(P/R) \|_{L^p(M) \to L^p(M)} \lesssim \sum\nolimits_j \sup\nolimits_R \| m_j(P/2^j R) \|_{L^p(M) \to L^p(M)} \lesssim C_p(m),
\end{equation}
which proves Theorem \ref{CpVersionOfTheorem} in general.

Given a multiplier $m$ supported on $[1/2,2]$, we define $T_R = m(P/R)$. We fix some geometric constant $\varepsilon_M \in (0,1)$, matching the constant given in the statement of Theorem \ref{TjbLemma}. Our goal is then to prove inequalities \eqref{TrivialLowFrequencyBound} and \eqref{dyadicMainReulst}. Proving \eqref{TrivialLowFrequencyBound} is simple because the operators $T_R$ are smoothing operators, uniformly for $0 < R < 1$, and $M$ is a compact manifold.

\begin{lemma} \label{lowjLemma}
    Let $M$ be a compact $d$-dimensional manifold, let $P$ be a classical elliptic self-adjoint pseudodifferential operator of order one. If $1 < p < 2d/(d+1)$, and if $m: [0,\infty) \to \CC$ is a regulated function with $\text{supp}(m) \subset [1/2,2]$, then
    \begin{equation}
        \sup\nolimits_{R \leq 1} \| T_R \|_{L^p(M) \to L^p(M)} \lesssim C_p(m).
    \end{equation}
\end{lemma}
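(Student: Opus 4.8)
The plan is to exploit the fact that for $R \leq 1$, the operator $T_R = m(P/R)$ is supported on eigenvalues $\lambda$ with $\lambda/R \in [1/2,2]$, hence $\lambda \leq 2R \leq 2$, so $T_R$ projects onto a fixed finite-dimensional space $\mathcal{V} = \bigoplus_{\lambda \leq 2} \mathcal{V}_\lambda$ of smooth eigenfunctions. The idea is to bound the $L^p \to L^p$ operator norm of $T_R$ by something uniform in $R \leq 1$, and then compare that uniform bound to $C_p(m)$.

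First I would fix an $L^2$-orthonormal basis $\{e_1,\dots,e_N\}$ of $\mathcal{V}$ consisting of eigenfunctions, $Pe_k = \lambda_k e_k$ with $\lambda_k \in [0,2]$. Since the $e_k$ are smooth on a compact manifold, they lie in $L^r(M)$ for every $r \in [1,\infty]$; let $A = \max_k \big( \|e_k\|_{L^p(M)} \|e_k\|_{L^{p'}(M)} \big)$, a finite constant depending only on $M$ and $P$ (and $p$). Writing $T_R f = \sum_{k} m(\lambda_k/R)\langle f, e_k\rangle e_k$, Hölder's inequality gives
\begin{equation}
    \| T_R f \|_{L^p(M)} \leq \sum_{k=1}^N |m(\lambda_k/R)|\, \|e_k\|_{L^{p'}(M)} \|f\|_{L^p(M)}\, \|e_k\|_{L^p(M)} \leq N A\, \|m\|_{L^\infty[0,\infty)}\, \|f\|_{L^p(M)},
\end{equation}
so $\sup_{R \leq 1} \|T_R\|_{L^p(M)\to L^p(M)} \lesssim \|m\|_{L^\infty[0,\infty)}$, with the implicit constant depending only on $M$, $P$, and $p$.

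The remaining step is to show $\|m\|_{L^\infty[0,\infty)} \lesssim C_p(m)$. This follows from the transference/equivalence already quoted in the excerpt: by \eqref{kiminequality} (Theorem 1.1 of \cite{KimQuasiradial}), $\|m(|D|)\|_{L^p(\RR^d)\to L^p(\RR^d)} \sim C_p(m)$; by interpolation with the trivial $L^2$ bound and duality, the $L^2(\RR^d)\to L^2(\RR^d)$ operator norm of $m(|D|)$, which equals $\|m\|_{L^\infty[0,\infty)}$, is dominated by $\|m(|D|)\|_{L^p(\RR^d)\to L^p(\RR^d)} \sim C_p(m)$. (Here the hypothesis $1 < p < 2d/(d+1)$ guarantees we are in the range where \eqref{kiminequality} applies.) Combining the two displays yields $\sup_{R \leq 1}\|T_R\|_{L^p(M)\to L^p(M)} \lesssim C_p(m)$, as claimed.

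There is no serious obstacle here; the only mild subtlety is making sure the finite-dimensional space $\mathcal{V}$ is chosen once and for all (it does not depend on $R$ as long as $R \leq 1$, since $\mathrm{supp}(m)\subset[1/2,2]$ forces $\lambda \leq 2R \leq 2$), and keeping track of the fact that the constant $A$ and the dimension $N$ depend only on $M$, $P$, and $p$, not on $m$ or $R$. An alternative to the last step would be to cite directly that $\|m\|_\infty \lesssim C_p(m)$ for compactly supported $m$, but routing it through the Euclidean multiplier equivalence already invoked in Lemma \ref{lowerboundlemma} keeps the argument self-contained within the paper's framework.
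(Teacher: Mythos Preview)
Your proof is essentially the same as the paper's: reduce to a fixed finite-dimensional eigenspace, use H\"older to get $\sup_{R\le 1}\|T_R\|_{L^p\to L^p}\lesssim \|m\|_{L^\infty}$, then bound $\|m\|_{L^\infty}\lesssim C_p(m)$.

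The only difference is in this last step. The paper does it directly: by Fourier inversion and H\"older,
\[
|m(\lambda)|\le \int|\widehat m(t)|\,dt \le \|\langle t\rangle^{\alpha(p)}\widehat m\|_{L^p}\,\|\langle t\rangle^{-\alpha(p)}\|_{L^{p'}},
\]
and $\alpha(p)p'>1$ precisely when $p<2d/(d+1)$, so $\|m\|_{L^\infty}\lesssim C_p(m)$ on the full stated range. Your route through \eqref{kiminequality} and interpolation is correct in principle, but it is unnecessarily heavy, and within the paper \eqref{kiminequality} is only invoked for $1<p<2(d-1)/(d+1)$, strictly smaller than the range $1<p<2d/(d+1)$ in the lemma. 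So as written your final step does not literally cover the whole range claimed; the elementary H\"older argument does.
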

\begin{proof}
    Let $T_R = m_R(P)$. The set $\Lambda_P \cap [0,2]$ is finite. For each $\lambda \in \Lambda_P$, choose a finite orthonormal basis $\mathcal{E}_\lambda$. Then we can write
    \begin{equation}
        T_R = \sum\nolimits_{\lambda \in \Lambda_P} \sum\nolimits_{e \in \mathcal{E}_\lambda} \langle f, e \rangle e.
    \end{equation}
    Since $\mathcal{V}_\lambda \subset C^\infty(M)$, H\"{o}lder's inequality implies
    \begin{equation}
    \begin{split}
        \| \langle f, e \rangle e \|_{L^p(M)} &\leq \| f \|_{L^p(M)} \| e \|_{L^{p'}(M)} \| e \|_{L^p(M)} \lesssim_\lambda \| f \|_{L^p(M)}.
    \end{split}
    \end{equation}
    But this means that
    \begin{equation}
        \left\| T_R f \right\|_{L^p(M)} \leq \sum\nolimits_{\lambda \in \Lambda_P \cap [0,2]} \sum\nolimits_{e \in \mathcal{E}_\lambda} |m(\lambda/R)| \| \langle f, e \rangle e \|_{L^p(M)} \lesssim \| m \|_{L^\infty[0,\infty)}.
    \end{equation}
    The proof is completed by noting that for $1 < p < 2d/(d+1)$, the Sobolev embedding theorem guarantees that $\| m \|_{L^\infty[0,\infty)} \lesssim C_p(m)$.
    %But this follows because $C_p(m)$ gives uniform bounds on the $L^p$ norm of the Fourier transform of $\langle \partial_\lambda \rangle^{\alpha(p)} m_h$. We see this immediately by applying the Sobolev embedding theorem and the Hausdorff-Young inequality; since the supports of the functions $m_j(\cdot/2^j)$ have bounded overlap, then
    %
    %\[ \sup\nolimits_\lambda |m(\lambda)| \lesssim \sup\nolimits_j \sup\nolimits_{\lambda > 0} |m_j(\lambda)| \lesssim \sup\nolimits_j \| m_j \|_{W_{\alpha(p)}^{\max(p,p')}} \lesssim C_p(m). \]
    %
    %Thus we have proved what was required.
    \begin{comment}

    For each $j$, we can apply the Fourier inversion formula to write
    %
    \[ m_j(\lambda) = \int \widehat{m}_j(t) e^{2 \pi i t \lambda}\; dt. \]
    %
    H\"{o}lder's inequality then implies that
    %
    \[ |m_j(\lambda)| \leq \| \langle t \rangle^{\alpha(p)} \widehat{m}_j(t) \|_{L^p(\RR)} \left( \int \langle t \rangle^{- \alpha(p) p'}\; dt \right)^{1/p'} \]
    %
    % |D^s m_j| <= \| t^{alpha(p) - s} t^s m^_j |_p ( int t^{(s - alpha(p))p'} dt ) \|
    %
    % sum_j 2^{1 + (s - alpha(p)) p'}
    %
    % For p < 2d/(d+1)
    %
    % 1 + (s - alpha(p)) p' < 0
    % s < alpha(p) + 1/p - 1
    % s < (d-1)|1/p - 1/2| + 1/p - 1
    % d/p - (d+1)/2
    % s < d|1/p - 1/2| - 1/2
    % 1 - alpha(p)p'
    %
    % 1 - (d-1)|p'/2 - 1|
    % p' > 2d/(d-1)
    % 1/p' < (d-1)/2d
    % 1/p > (d+1)/2d
    For $d \geq 4$ and for $p < 2d/(d+1)$, we have
    %
    \[ \left( \int \langle t \rangle^{- \alpha(p) p'}\; dt \right)^{1/p'} < \infty, \]
    %
    Summing in $j$, and using the almost disjoint support of the functions $\{ m_j \}$ thus gives $\| m \|_{L^\infty(\RR)} \lesssim C_p(m)$.
    \end{comment}
\end{proof}

Lemma \ref{lowjLemma} is relatively simple to prove because the operators $\{ T_R: 0 \leq R \leq 1 \}$ are supported on a common, finite dimensional subspace of $L^2(M)$. We thus did not have to perform any analysis of the interactions between different eigenfunctions, because the triangle inequality is efficient enough to obtain a finite bound. For $R > 1$, things are not so simple, and so proving \eqref{dyadicMainReulst} requires a more refined analysis of multipliers than \eqref{TrivialLowFrequencyBound}. A standard method, originally due to H\"{o}rmander \cite{Hormander2}, is to apply the Fourier inversion formula to write, for a regulated function $h: \RR \to \CC$,
\begin{equation}
    h(P) = \int_{-\infty}^\infty \widehat{h}(t) e^{2 \pi i t P}\; dt,
\end{equation}
where $e^{2 \pi i t P}$ is the spectral multiplier operator on $M$ which, as $t$ varies, gives solutions to the half-wave equation $\partial_t = 2 \pi i P$ on $M$. Thus we conclude that
\begin{equation}
    T_R = \int_{-\infty}^\infty R \widehat{m}(R t) e^{2 \pi i t P}\; dt.
\end{equation}
Writing $T_R$ in this form, we are  lead to obtain estimates for averages of the wave equation tested against a relatively non-smooth function $R \widehat{m}(Rt)$. % We begin pursue these estimates in the next section of the paper. In the remainder of this section, we introduce the required notation to carry out this task.

Fix a bump function $q \in C_c^\infty(\RR)$ with $\supp(q) \subset [1/2,4]$ and $q(\lambda) = 1$ for $\lambda \in [1,2]$, and define $Q_R = q(P/R)$. % has range contained in the finite dimensional subspace $V_R$ of $C^\infty(M)$ spanned by eigenfunctions of $P$ with eigenvalues in $[2^{j-2},2^{j+2}]$. Since $P$ is elliptic, it is often a useful heuristic that elements of $V_R$ are `frequency localized' at a scale $R$.
We write
\begin{equation}
    T_R = Q_R \circ T_R \circ Q_R = \int_{\RR} R \widehat{m}(R t) (Q_R \circ e^{2 \pi i t P} \circ Q_R)\; dt,
\end{equation}
and view the operators $(Q_R \circ e^{2 \pi i t P} \circ Q_R)$ as `frequency localized' wave propagators.

Replacing the operator $P$ by $aP + b$ for appropriate $a,b \in \RR$, we may assume without loss of generality that all eigenvalues of $P$ are integers. It follows that $e^{2 \pi i (t + n) P} = e^{2 \pi i t P}$ for any $t \in \RR$ and $n \in \ZZ$. Let $I_0$ denote the interval $[-1/2,1/2]$. We may then write
\begin{equation}
    T_R = \int_{I_0} b_R(t) (Q_R \circ e^{2 \pi i tP} \circ Q_R)\; dt,
\end{equation}
where $b_R: I_0 \to \CC$ is the periodic function
\begin{equation}
    b_R(t) = \sum\nolimits_{n \in \ZZ} R \widehat{m}(R (t + n)).
\end{equation}
% Sphere 1, but we scale is by 1/2pi
% Sphere of radius 1/2 pi, has sectional curvatures kappa = 2 pi
% so 1/4
We split our analysis of $T_R$ into two regimes: regime $\text{I}$ and regime $\text{II}$. In regime $\text{I}$, we analyze the behaviour of the wave equation over times $0 \leq |t| \leq \varepsilon_M$ by decomposing this time interval into length $1/R$ pieces, and analyzing the interactions of the wave equations between the different intervals. In regime $\text{II}$, we analyze the behaviour of the wave equation over times $\varepsilon_M \leq |t| \leq 1$. Here we need not perform such a decomposition, since the boundedness of $C_p(m)$ gives better control on the function $b_R$ over these times.
%Heuristically, the result is just a discretization of the condition that $C_p(m) < \infty$, but using the additional fact that $b_j$ was obtained from a periodization of the Fourier transform of a function with `frequency support' on an annulus of radius $2^j$, and thus locally constant at a scale $1/2^j$. This allows us to replace $L^p$ norms with $L^1$ norms on intervals of length $1/2^j$ without incurring any loss.

\begin{lemma} \label{decompositionLemma}
    Fix $\varepsilon > 0$. Let $\mathcal{T}_R = \ZZ/R \cap [-\varepsilon, \varepsilon]$ and define $I_t = [t - 1/R, t + 1/R]$. For a function $m: [0,\infty) \to \CC$, define a periodic function $b: I_0 \to \CC$ by setting
    \begin{equation}
        b(t) = \sum\nolimits_{n \in \ZZ} R \widehat{m}(R(t + n)).
    \end{equation}
    Then we can write $b = \left( \sum\nolimits_{t_0 \in \mathcal{T}_R} b_{t_0}^I \right) + b^{II}$, where
    \begin{equation} \supp(b_{t_0}^I) \subset I_{t_0} \quad\text{and}\quad \supp(b_R^{II}) \subset I_0 \smallsetminus [-\varepsilon,\varepsilon].
    \end{equation}
    Moreover, we have
    \begin{equation} \label{DKAPDKAWIODJAWOI}
        \left( \sum\nolimits_{t_0 \in \mathcal{T}_R} \Big[ \| b^I_{t_0} \|_{L^p(I_0)} \langle R t_0 \rangle^{\alpha(p)} \Big]^p \right)^{1/p} \lesssim R^{1/p'} C_p(m)
    \end{equation}
        and
    \begin{equation} \label{DWAIOJDAOIWDJWAIODJIOJD}
        \| b^{II} \|_{L^p(I_0)} \lesssim R^{1/p' - \alpha(p)} C_p(m).
    \end{equation}
\end{lemma}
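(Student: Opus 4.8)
The plan is to deduce everything from a single weighted $L^p$ estimate on the periodization $b$, namely
\[
\Big( \int_{I_0} \langle Rt\rangle^{\alpha(p)p}\,|b(t)|^p\; dt \Big)^{1/p} \ \lesssim\ R^{1/p'}\,C_p(m),
\]
valid for $R\ge 1$, which is the range in which the lemma will be applied. Granting this, the decomposition comes from a routine smooth partition of unity, and the two claimed bounds follow by localizing the weighted estimate. The whole argument rests on the fact that $\alpha(p)=(d-1)(1/p-1/2)>1$ precisely when $1<p<2(d-1)/(d+1)$.

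\emph{The partition and the reduction.} First I would choose a smooth partition of unity $\{\psi_{t_0}\}_{t_0\in\mathcal{T}_R}$ with $\supp\psi_{t_0}\subset I_{t_0}$, $0\le\psi_{t_0}\le 1$, and $\Psi:=\sum_{t_0\in\mathcal{T}_R}\psi_{t_0}$ identically $1$ on a neighbourhood of $[-\varepsilon,\varepsilon]$ with $0\le\Psi\le1$ throughout; this is possible since $[-\varepsilon,\varepsilon]$ is compactly contained in $\bigcup_{t_0\in\mathcal{T}_R}\mathrm{int}(I_{t_0})$. Set $b_{t_0}^I=\psi_{t_0}b$ and $b^{II}=(1-\Psi)b$. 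Since $b$ is the restriction of a $1$-periodic function to $I_0$ and $b^{II}$ vanishes near $[-\varepsilon,\varepsilon]$, the stated support conditions are immediate. For \eqref{DKAPDKAWIODJAWOI}, using $|\psi_{t_0}|\le1$, the elementary inequality $\langle Rt_0\rangle\le 2\langle Rt\rangle$ for $t\in I_{t_0}$ (here $|Rt_0-Rt|\le1\le\langle Rt\rangle$), and the at-most-twice overlap of the $I_{t_0}$, one gets
\[
\sum_{t_0\in\mathcal{T}_R}\langle Rt_0\rangle^{\alpha(p)p}\|b_{t_0}^I\|_{L^p(I_0)}^p \ \le\ \sum_{t_0}\int_{I_{t_0}}\langle Rt_0\rangle^{\alpha(p)p}|b|^p \ \lesssim\ \int_{I_0}\langle Rt\rangle^{\alpha(p)p}|b(t)|^p\;dt,
\]
which is $\lesssim R^{p/p'}C_p(m)^p$ by the weighted estimate; taking $p$-th roots gives \eqref{DKAPDKAWIODJAWOI}. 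For \eqref{DWAIOJDAOIWDJWAIODJIOJD}, the function $b^{II}$ is supported in $I_0\smallsetminus[-\varepsilon,\varepsilon]$, where $\langle Rt\rangle\ge R|t|\ge R\varepsilon$, so
\[
\|b^{II}\|_{L^p(I_0)} \ \le\ (R\varepsilon)^{-\alpha(p)}\,\|\langle R\cdot\rangle^{\alpha(p)}b\|_{L^p(I_0)} \ \lesssim_{\varepsilon}\ R^{1/p'-\alpha(p)}\,C_p(m),
\]
again by the weighted estimate.

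\emph{The weighted estimate.} Here I would write $b=\sum_{n\in\ZZ}b_n$ with $b_n(t)=R\widehat m(R(t+n))$; the estimates below show this series converges absolutely in $L^p(I_0)$, so Minkowski's inequality gives $\|\langle R\cdot\rangle^{\alpha(p)}b\|_{L^p(I_0)}\le\sum_n\|\langle R\cdot\rangle^{\alpha(p)}b_n\|_{L^p(I_0)}$. For $n=0$, the substitution $s=Rt$ together with the evenness of $\widehat m$ gives $\|\langle R\cdot\rangle^{\alpha(p)}b_0\|_{L^p(I_0)}=R^{1/p'}\|\langle s\rangle^{\alpha(p)}\widehat m(s)\|_{L^p(|s|\le R/2)}\lesssim R^{1/p'}C_p(m)$. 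For $|n|\ge1$ and $t\in I_0$ one has $\langle Rt\rangle\lesssim R$ (using $R\ge1$) while $\langle R(t+n)\rangle\ge R|t+n|\ge R|n|/2$, hence $\langle Rt\rangle^{\alpha(p)}\lesssim|n|^{-\alpha(p)}\langle R(t+n)\rangle^{\alpha(p)}$, and substituting $s=R(t+n)$ yields
\[
\|\langle R\cdot\rangle^{\alpha(p)}b_n\|_{L^p(I_0)} \ \lesssim\ |n|^{-\alpha(p)}R^{1/p'}\,\|\langle s\rangle^{\alpha(p)}\widehat m(s)\|_{L^p(\RR)} \ \lesssim\ |n|^{-\alpha(p)}R^{1/p'}C_p(m).
\]
Summing over $n$ and using $\sum_{n\ne0}|n|^{-\alpha(p)}<\infty$ completes the proof.

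\emph{Main obstacle.} There is no deep obstacle; the lemma is essentially bookkeeping once one isolates the right weighted estimate. The one point that must be handled with care is the convergence of the periodization — the sum over $n$ in the last step: the series $\sum_{n\ne0}|n|^{-\alpha(p)}$ converges exactly because $\alpha(p)>1$, i.e.\ $p<2(d-1)/(d+1)$, and one must keep $R\ge1$ so that the crude bound $\langle Rt\rangle\lesssim R$ on $I_0$ can be traded against a factor $|n|^{-\alpha(p)}$ in the tail; one readily checks that for small $R$ the estimate \eqref{DKAPDKAWIODJAWOI} in the stated form would fail, so the restriction $R\ge1$ is genuinely needed rather than merely convenient. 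The remaining ingredients — the partition of unity, the at-most-twice overlap of the $I_{t_0}$, and the comparability $\langle Rt_0\rangle\sim\langle Rt\rangle$ on $I_{t_0}$ — are elementary.
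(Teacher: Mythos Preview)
Your proof is correct and follows essentially the same route as the paper's: both construct the decomposition via a partition of unity subordinate to $\{I_{t_0}\}$, split $b=\sum_n b_n$ with $b_n(t)=R\widehat m(R(t+n))$, handle $n=0$ by the change of variables $s=Rt$, and control the tail $n\neq0$ using $\alpha(p)>1$. The only organizational difference is that you package everything into a single weighted estimate $\|\langle R\cdot\rangle^{\alpha(p)}b\|_{L^p(I_0)}\lesssim R^{1/p'}C_p(m)$ and sum the $b_n$ via Minkowski, whereas the paper keeps the pieces separate and sums via H\"older in $\ell^p$--$\ell^{p'}$; both summations rely on the same convergence $\sum_{n\neq0}|n|^{-\alpha(p)}<\infty$ (resp.\ $\sum|n|^{-\alpha(p)p'}<\infty$), and your explicit remark that $R\ge1$ is genuinely needed is apt.
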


The proof is a simple calculation which we relegate to the appendix.

%\begin{remark} \label{constantspecificationremark}
%    \normalfont
%    The choice of $\varepsilon_M$ is made so that the geometric arguments of Section \ref{estimatesforwavepackets} work. Namely, $\varepsilon_M$ needs to be small enough so that geodesics normal to a family of hypersurfaces contained within a set $U$ with diameter $\varepsilon_M$ do not intersect. If all sectional curvatures of the manifold $M$ are bounded by $\kappa$, then we may pick $\varepsilon_M$ to be any value smaller than $0.2 \pi \kappa^{-1/2}$.  Given our choice of $\varepsilon_M$, we then pick $C_M$ to be any constant greater than or equal to $\log(2 / \varepsilon_M)$. This is chosen precisely so that for $j > C_M$, $\varepsilon_M \geq 2^{1-j}$, so that the support restriction of $b_j^{II}$ is possible given the assumption on the supports of the functions $\{ b_j^I \}$.
%    In particular, the sphere with radius $1/2\pi$ (and unit period geodesic flow) has constant sectional curvature $(2 \pi)^2$, so here we can choose $\varepsilon_M$ to be any value smaller than $0.1$. In particular, we can choose $\varepsilon_M = 0.05$ and $C_M = 2$.
%\end{remark}
The following proposition, a kind of $L^p$ square root cancellation bound, implies \eqref{dyadicMainReulst} once we take Lemma \ref{decompositionLemma} into account. % and comparing \eqref{DKAPDKAWIODJAWOI} with \eqref{ejqwoifjeoifjwqoifjwqoi} and \eqref{DWAIOJDAOIWDJWAIODJIOJD} with \eqref{DPOIJAOIWDJQWIOFJQOIVJIEOVNFNJNVNV},
Since we already proved \eqref{TrivialLowFrequencyBound}, proving this proposition completes the proof of Theorem \ref{CpVersionOfTheorem}.

\begin{prop} \label{TjbLemma}
    Let $P$ be a classical elliptic self-adjoint pseudodifferential operator of order one on a compact manifold $M$ of dimension $d$ satisfying the assumptions of Theorem \ref{CpVersionOfTheorem}. Fix $R > 0$ and suppose $1 < p < 2 (d-1)/(d+1)$. Then there exists $\varepsilon_M > 0$ with the following property. Consider any function $b: I_0 \to \CC$, and suppose we can write $b = \sum\nolimits_{t_0 \in \mathcal{T}_R} b_{t_0}^I + b^{II}$, where $\supp(b_{t_0}^I) \subset I_{t_0}$ and $\supp(b_R^{II}) \subset I_0 \smallsetminus [-\varepsilon_M,\varepsilon_M]$. Define operators $T^I = \sum\nolimits_{t_0 \in \mathcal{T}_R} T^I_{t_0}$ and $T^{II}$, where
    \[ T_{t_0}^I = \int b_{t_0}^I(t) ( Q_R \circ e^{2 \pi i tP} \circ Q_R )\; dt\ \ \text{and}\ \ T^{II} = \int b^{II}(t) ( Q_R \circ e^{2 \pi i tP} \circ Q_R)\; dt. \]
    Then
    \begin{equation} \label{ejqwoifjeoifjwqoifjwqoi}
        \| T^I \|_{L^p \to L^p} \lesssim R^{-1/p'} \left( \sum\nolimits_{t_0 \in \mathcal{T}_R} \Big[ \| b^I_{t_0} \|_{L^p(I_0)} \langle R t_0 \rangle^{\alpha(p)} \Big]^{p} \right)^{1/p}
    \end{equation}
    and
    \begin{equation} \label{DPOIJAOIWDJQWIOFJQOIVJIEOVNFNJNVNV}
        \| T^{II} \|_{L^p \to L^p} \lesssim R^{\alpha(p) - 1/p'} \| b^{II} \|_{L^p(I_0)}.
    \end{equation}
\end{prop}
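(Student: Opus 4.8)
The plan is to prove the two estimates \eqref{ejqwoifjeoifjwqoifjwqoi} and \eqref{DPOIJAOIWDJQWIOFJQOIVJIEOVNFNJNVNV} separately, both starting from the wave-packet description of the frequency-localized propagators $Q_R\circ e^{2\pi i tP}\circ Q_R$ obtained in Section \ref{estimatesforwavepackets}. Under the cosphere curvature hypothesis of Theorem \ref{CpVersionOfTheorem}, the principal symbol $p$ is Legendre-dual to a Finsler metric on $M$, and for $|t|$ in the relevant range $Q_R\circ e^{2\pi i tP}\circ Q_R$ is, modulo a rapidly decaying remainder, a Fourier integral operator associated with the Finsler geodesic flow; its Schwartz kernel is a sum of wave packets supported within Finsler distance $R^{-1}$ of the Finsler geodesic sphere of radius $|t|$, with the curvature of the cospheres supplying the non-degeneracy needed for the sharp amplitude bounds. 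I would isolate this once and feed it into both regimes. It also explains the common factor $R^{-1/p'}$: the function $b^I_{t_0}$ (resp.\ $b^{II}$) is integrated against a propagator over a set of measure $\lesssim R^{-1}$ (resp.\ $\lesssim 1$), so H\"older's inequality in $t$ converts its $L^p$ norm into the $L^1$ norm that naturally pairs with a bounded family of propagators, at the cost of exactly $R^{-1/p'}$.

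\emph{Regime I.} Each $T^I_{t_0}$ is a single-shell operator at Finsler radius $\rho=|t_0|$ and thickness $R^{-1}$; the $L^p\to L^p$ norm of such a shell with unit amplitude is $\sim\langle Rt_0\rangle^{\alpha(p)}$, precisely the weight appearing in \eqref{ejqwoifjeoifjwqoifjwqoi}, and this is where the cosphere curvature enters, the shells being $R^{-1}$-neighborhoods of curved hypersurfaces. To sum over $t_0$ with the $\ell^p$ gain I would adapt the density-decomposition method of \cite{HeoandNazarovandSeeger}: decompose the input as $f=\sum_k f_k$ so that on $\supp(f_k)$ the overlap function of the system of Finsler annuli of radii $\{|t_0|\}$ has size $\sim 2^k$; estimate $\|T^I_{t_0}f_k\|_{L^p}$ by interpolating the trivial bound against the $L^2$ bound that exploits the density $2^k$ together with the smallness of the measure of the corresponding level set; then sum in $k$, and sum in $t_0$ using the quasi-orthogonality estimates of Section \ref{estimatesforwavepackets} — shells of distinct radii giving nearly orthogonal operators — to replace the $\ell^1$ sum over $t_0$ by the $\ell^p$ sum. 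This is the content of Section \ref{regime1firstsection}.

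\emph{Regime II.} Since $b^{II}$ is supported in $\varepsilon_M\le|t|\le 1$, no delicate decomposition is needed; the estimate \eqref{DWAIOJDAOIWDJWAIODJIOJD} of Lemma \ref{decompositionLemma} already carries the required smoothing. I would reduce \eqref{DPOIJAOIWDJQWIOFJQOIVJIEOVNFNJNVNV} to an endpoint local smoothing inequality for the half-wave equation on $M$, namely that for $g$ frequency-localized at scale $R$ and $\beta$ supported in $\varepsilon_M\le|t|\le 1$,
\[
\Big\| \int \beta(t)\,\big(Q_R\circ e^{2\pi i tP}\circ Q_R\big)g\; dt \Big\|_{L^p(M)} \lesssim R^{\alpha(p)-1/p'}\,\|\beta\|_{L^p(I_0)}\,\|g\|_{L^p(M)};
\]
applying this with $\beta=b^{II}$ and $g=f$ gives \eqref{DPOIJAOIWDJQWIOFJQOIVJIEOVNFNJNVNV} directly. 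By duality and H\"older's inequality in $t$, this displayed bound is equivalent to a sharp local smoothing estimate of the form $\|e^{2\pi i tP}Q_R h\|_{L^{p'}_{t,x}([\varepsilon_M,1]\times M)}\lesssim R^{\alpha(p)-1/p'}\|h\|_{L^{p'}(M)}$, valid in the range $1<p<2(d-1)/(d+1)$. Its proof, which exploits the periodicity of the Finsler geodesic flow to keep the Lax--H\"ormander parametrix and the oscillatory-integral estimates built from it valid and uniform over all the relevant times, together with the cosphere curvature to obtain the sharp exponent, is carried out in Section \ref{regime2finalsection}. Combining \eqref{ejqwoifjeoifjwqoifjwqoi} with \eqref{DKAPDKAWIODJAWOI}, and \eqref{DPOIJAOIWDJQWIOFJQOIVJIEOVNFNJNVNV} with \eqref{DWAIOJDAOIWDJWAIODJIOJD}, then yields \eqref{dyadicMainReulst}.

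\emph{Main obstacle.} The crux is twofold. In Regime I it is making the \cite{HeoandNazarovandSeeger} density decomposition function in the variable-coefficient Finsler setting — in particular, proving the quasi-orthogonality of the shell operators $T^I_{t_0}$ when the shells are geodesic spheres of a curved Finsler metric rather than round Euclidean spheres — which is exactly what forces the curvature hypothesis and the Fourier integral operator analysis of Section \ref{estimatesforwavepackets}. In Regime II it is establishing the \emph{endpoint} (no $\varepsilon$-loss) local smoothing inequality uniformly over the relevant times: any positive power of $R$ lost there would destroy the sharp constant $C_p(m)$, and it is precisely the need to control the propagator over large times that forces the periodic-spectrum hypothesis.
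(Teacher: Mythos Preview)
Your proposal is essentially correct and matches the paper's approach: Regime I is handled via the density-decomposition argument of \cite{HeoandNazarovandSeeger} (Lemma \ref{regime1Lemma}, built on the $L^2$ density estimate Proposition \ref{L2DensityProposition} and the quasi-orthogonality estimates of Proposition \ref{theMainEstimatesForWave}), and Regime II is reduced by duality and H\"older in $t$ to the endpoint local smoothing inequality, exactly as in Lemma \ref{LocalSmoothingLargeTimesTheorem}.

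There is one misattribution worth correcting. You write that the proof of the endpoint local smoothing estimate ``exploits the periodicity of the Finsler geodesic flow'' and that ``it is precisely the need to control the propagator over large times that forces the periodic-spectrum hypothesis.'' In fact the local smoothing estimate \eqref{thelocalsmoothinginequality} is on the fixed bounded interval $I_0=[-1/2,1/2]$ and requires \emph{only} the cosphere curvature hypothesis, not periodicity; the paper simply quotes it from \cite{LeeSeeger} (and remarks it also follows from Lemma \ref{LpBoundLemma}). The periodic-spectrum assumption is used \emph{before} Proposition \ref{TjbLemma}, in Section \ref{PrelimSetup}, to collapse the integral $\int_{-\infty}^\infty R\widehat{m}(Rt)e^{2\pi i tP}\,dt$ onto $I_0$ via $e^{2\pi i (t+n)P}=e^{2\pi i tP}$. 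Thus periodicity is what allows the large-time behavior to be handled by a \emph{finite-time} local smoothing bound, rather than being an ingredient in that bound's proof. Within the proof of Proposition \ref{TjbLemma} itself, neither Regime I (see the opening of Section \ref{estimatesforwavepackets}) nor Regime II invokes periodicity.
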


Proposition \ref{TjbLemma} splits the main bound of the paper into two regimes: regime $\text{I}$ and regime $\text{II}$. Noting that we require weaker bounds in \eqref{DPOIJAOIWDJQWIOFJQOIVJIEOVNFNJNVNV} than in \eqref{ejqwoifjeoifjwqoifjwqoi}, the operator $T^{II}$ will not require as refined an analysis as for the operator $T^I$, and we obtain bounds on $T^{II}$ by a reduction to an endpoint local smoothing inequality in Section \ref{regime2finalsection}. On the other hand, to obtain more refined estimates for the $L^p$ norms of quantities of the form $f = T^I u$, we consider a decompositions of the form $u = \sum_{x_0} u_{x_0}$, where $u_{x_0}: M \to \CC$ is supported on a ball $B(x,1/R)$ of radius $1/R$ centered at $x_0$. We then have $\smash{f = \sum\nolimits_{(x_0,t_0)} f_{x_0,t_0}}$, where $f_{x_0,t_0} = T^I_{t_0} u_{x_0}$. To control $f$ we must establish an $L^p$ square root cancellation bound for the functions $\{ f_{x_0,t_0} \}$. In the next section, we study the $L^2$ quasi-orthogonality of these functions, which we use as a starting point to obtain the required square root cancellation.

\section{Quasi-Orthogonality Estimates For Solutions to the Half-Wave Equation Obtained From High-Frequency Wave Packets} \label{estimatesforwavepackets}

The discussion at the end of Section \ref{PrelimSetup} motivates us to consider estimates for functions obtained by taking averages of the wave equation over a small time interval, with initial conditions localized to a particular part of space. In this section, we study the $L^2$ orthogonality of such quantities. We do not exploit periodicity of the Hamiltonian flow in this section since we are only dealing with estimates for the half wave-equation for \emph{small times}. Our results here thus hold for any manifold $M$, and any operator $P$ whose principal symbol has cospheres with non-vanishing Gaussian curvature.

In order to prove the required quasi-orthogonality estimates of the functions $\{ f_{x_0,t_0} \}$, we find a new connection between \emph{Finsler geometry} on the manifold $M$ and the behaviour of the operator $P$. We will not assume any knowledge of Finsler geometry in the sequel, describing results from the literature needed when required and trying to relate the tools we use to their Riemannian analogues. Moreover, in the case where $p(x,\xi) = \sqrt{\sum g^{jk}(x) \xi_j \xi_k}$ for some Riemannian metric $g$ on $M$, the Finsler geometry we study will simply be the Riemannian geometry on $M$ given by the metric $g$. We begin this section by briefly outlining the relevant concepts of Finsler geometry required to state Theorem \ref{theMainEstimatesForWave}, relegating more precise details to Subsection \ref{critpointsection} where Finsler geometry arises more explicitly in our proofs.

%In this section, we briefly describe the Finsler geometry that will come into play in our argument, mainly relying on results in \cite{BaoChern}. %, giving explicit citations in the literature for those who wish to read the results in more detail.
%This geometry will only come into play in the next two sections, and only seriously so in the second such section.

For our purposes, Finsler geometry is very akin to Riemannian geometry, though instead of a smoothly varying \emph{inner product} being given on the tangent spaces of $M$, in Finsler geometry we are given a smoothly varying \emph{vector space norm} $F: TM \to [0,\infty)$ on the tangent spaces of $M$. Many results in Riemannian geometry carry over to the Finsler setting. In particular, we can define the length of a curve $c: I \to M$ via it's derivative $c': I \to TM$ by the formula
\begin{equation}
    L(c) = \int_I F(c'),
\end{equation}
and thus get a theory of metric, geodesics, and curvature as in the Riemannian setting. The main quirk of the theory for us is that the metric induced from this length function is not symmetric. Indeed, let
\begin{equation}
    d_+(p_0,p_1) = \inf \big\{ L(c) : c(0) = p_0\ \text{and}\ c(1) = p_1 \big\},
\end{equation}
and set $d_-(p_0,p_1) = d_+(p_1,p_0)$, i.e. so that
\begin{equation}
    d_+(p_0,p_1) = \inf \big\{ L(c) : c(0) = p_1\ \text{and}\ c(1) = p_0 \big\}.
\end{equation}
In the Riemannian setting, $d_+$ and $d_-$ are both equal to the usual Riemannian metric, but on a general Finsler manifold one has $d_+ \neq d_-$, and these functions are distinct quasi-metrics on $M$ (though a compactness argument shows $d_M^+(x_0,x_1) \sim d_M^-(x_0,x_1)$). Geodesics from a point $p_0$ to a point $p_1$ need not be geodesics from $p_1$ to $p_0$ when reversed. A metric can be obtained by setting $d_M = (d_M^+ + d_M^-) / 2$, and we will use this as the canonical metric on $M$ in what follows.

Finsler geometry arises here because, if $p: T^* M \to [0,\infty)$ is a homogeneous function satisfying the curvature assumptions of Theorem \ref{CpVersionOfTheorem}, then a Finsler metric arises by taking the `dual norm' of $p$, i.e. setting
\begin{equation}
    F(x,v) = \sup \big\{ \xi(v) : \xi \in T^*_x M\ \text{and}\ p(x,\xi) = 1 \big\}.
\end{equation}
Thus $p$ induces quasimetrics $d_+$ and $d_-$ on $M$ via the metric $F$ as defined above. We now use these quasimetrics to state Proposition \ref{theMainEstimatesForWave}.

\pagebreak[3]

\begin{prop} \label{theMainEstimatesForWave}
    Let $M$ be a compact manifold of dimension $d$, and let $P$ be a classical elliptic self-adjoint pseudodifferential operator of order one whose principal symbol satisfies the curvature assumptions of Theorem \ref{CpVersionOfTheorem}. Then there exists $\varepsilon_M > 0$ such that for all $R \geq 0$, the following estimates hold:
    \begin{itemize}[leftmargin=8mm]
        \item (Pointwise Estimates) Fix  $|t_0| \leq \varepsilon_M$ and $x_0 \in M$. Consider any two measurable functions $c: \RR \to \CC$ and $u: M \to \CC$, with $\| c \|_{L^1(\RR)} \leq 1$ and $\| u \|_{L^1(M)} \leq 1$, and with $\supp(c) \subset I_{t_0}$ and $\supp(u) \subset B(x_0,1/R)$. Define $S: M \to \CC$ by setting
        \begin{equation}
            S = \int c(t) (Q_R \circ e^{2 \pi i t P} \circ Q_R) \{ u \}\; dt.
        \end{equation}
        Then for any $K \geq 0$, and any $x \in M$,
        \begin{equation}
            |S(x)| \lesssim_K \frac{R^d}{\langle R d_M(x_0,x) \rangle^{\frac{d-1}{2}}} \max\nolimits_{\pm} \Big\langle R \big| t_0 \pm d_M^\pm(x,x_0) \big| \Big\rangle^{-K}. 
        \end{equation}

        \item (Quasi-Orthogonality Estimates) Fix $|t_0 - t_1| \leq \varepsilon_M$, and $x_0, x_1 \in M$. Consider any two pairs of functions $c_0,c_1: \RR \to \CC$ and $u_0,u_1: M \to \CC$ such that, for each $\nu \in \{ 0, 1 \}$, $\| c_\nu \|_{L^1(\RR)} \leq 1$, $\| u_\nu \|_{L^1(M)} \leq 1$, $\supp(c_\nu) \subset I_{t_\nu}$, and $\supp(u_\nu) \subset B(x_\nu,1/R)$. Define $S_\nu: M \to \CC$ by setting
        \begin{equation}
            S_\nu = \int c_\nu(t) (Q_R \circ e^{2 \pi i t P} \circ Q_R) \{ u_\nu \}\; dt.
        \end{equation}
        Then for any $K \geq 0$,
        \begin{equation} \label{AWOICJAWOIVJEAO120321903120938}
            \left| \langle S_0, S_1 \rangle \right| \lesssim_K \frac{R^d}{\langle R d_M(x_0,x_1) \rangle^{\frac{d-1}{2}}} \max\nolimits_{\pm} \Big\langle R \big| (t_0 - t_1) \pm d^\pm(x_0,x_1) \big| \Big\rangle^{-K}.
        \end{equation}
    \end{itemize}
\end{prop}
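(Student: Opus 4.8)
The plan is to reduce both estimates to a single \emph{fixed-time} dispersive bound for the frequency-localized propagator and then to establish that bound via the short-time Lax--H\"{o}rmander parametrix together with stationary phase, the curvature hypothesis of Theorem \ref{CpVersionOfTheorem} entering precisely through the non-degeneracy of one Hessian. The bound I would isolate is: there are $\varepsilon_M', R_1 > 0$ depending only on $M$ and $P$ such that, for every $\varrho \in C_c^\infty$ with $\supp(\varrho) \subset [1/2,4]$, every $R \ge R_1$, every $|\tau| \le \varepsilon_M'$, and every $N \ge 0$, the Schwartz kernel $\mathcal{K}^R_\tau$ of $\varrho(P/R) e^{2\pi i \tau P}$ satisfies
\begin{equation} \label{star}
    \big|\mathcal{K}^R_\tau(x,y)\big| \lesssim_{\varrho,N} \frac{R^d}{\langle R\, d_M(x,y) \rangle^{\frac{d-1}{2}}}\; \max\nolimits_{\pm} \Big\langle R \big| \tau \pm d_M^\pm(x,y) \big| \Big\rangle^{-N}.
\end{equation}
Granting \eqref{star}, both assertions of the Proposition follow by soft manipulations. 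Since functions of $P$ commute, $Q_R e^{2\pi i t P} Q_R = q(P/R)^2 e^{2\pi i t P}$, so $S(x) = \int_{I_{t_0}}\!\int_{B(x_0,1/R)} c(t)\, \mathcal{K}^R_t(x,y)\, u(y)\, dy\, dt$ with $\varrho = q^2$; pulling $c$ and $u$ out in $L^1$ bounds $|S(x)|$ by $\sup\{\,|\mathcal{K}^R_t(x,y)| : t \in I_{t_0},\ y \in B(x_0,1/R)\,\}$, and since $|t-t_0| \le 1/R$ and $|d_M^\pm(x,y) - d_M^\pm(x,x_0)| \lesssim d_M(y,x_0) < 1/R$, the quantities appearing in \eqref{star} at $(t,y)$ are controlled by those at $(t_0,x_0)$, giving the pointwise estimate. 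For the quasi-orthogonality estimate, commutation gives $(Q_R e^{2\pi i s P} Q_R)^*(Q_R e^{2\pi i t P} Q_R) = q(P/R)^4 e^{2\pi i (t-s) P}$, whence
\begin{equation}
    \langle S_0, S_1 \rangle = \int\!\!\int c_0(t)\, \overline{c_1(s)}\, \Big\langle q(P/R)^4 e^{2\pi i (t-s) P} u_0,\, u_1 \Big\rangle\, dt\, ds,
\end{equation}
and pulling $c_0, c_1, u_0, u_1$ out in $L^1$ while noting that $t-s$ ranges over a $2/R$-neighbourhood of $t_0 - t_1$ (so $|t-s| \le \varepsilon_M + 2/R \le \varepsilon_M'$ for $R \ge R_1$) reduces matters again to \eqref{star}, with $\varrho = q^4$ and $\tau = t-s$, using the same comparabilities about $x_0, x_1, t_0 - t_1$. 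The remaining range $R \le R_1$ is handled crudely: either $[R/2,4R]$ contains no eigenvalue of $P$ and both sides vanish, or the finitely many eigenvalues there, as well as $R\, d_M$ and $R|t_0 \pm d_M^\pm|$, are all bounded in terms of $M$, so the trivial bounds $|S(x)| \lesssim_M 1$ and $|\langle S_0,S_1\rangle| \lesssim_M 1$ coming from $L^1\!\to\! L^\infty$ and $L^1\!\to\! L^2$ estimates for $Q_R e^{2\pi i t P} Q_R$ on the finite-dimensional range of $Q_R$ (as in Lemma \ref{lowjLemma}) are absorbed into the right-hand sides up to a constant depending on $M$ and $N$. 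One then takes $\varepsilon_M = \varepsilon_M'/2$.

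To prove \eqref{star} I would pick $\varepsilon_M'$ below the injectivity radius of the Finsler metric $F$ dual to $p$, which is bounded below by compactness of $M$ and the curvature hypothesis. For $|\tau| \le \varepsilon_M'$ the Lax--H\"{o}rmander construction writes $e^{2\pi i \tau P}$, modulo a smoothing family depending smoothly on $\tau$, as a finite sum of oscillatory integral operators with phases $\varphi(\tau,x,\xi)$ solving the eikonal equation for $p$, $\varphi(0,x,\xi) = x\cdot\xi$, and with amplitudes in a fixed symbol class uniformly in $\tau$; composing with $\varrho(P/R)$ localizes the amplitude to $p(x,\xi) \sim R$ and shrinks the smoothing remainder to $O_N(R^{-N})$. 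Rescaling $\xi = R\omega$ and using homogeneity of $\varphi$ presents $\mathcal{K}^R_\tau(x,y)$ as a finite sum of integrals $R^d \int e^{iR(\varphi(\tau,x,\omega) - y\cdot\omega)} a(\tau,x,y,\omega)\, d\omega$ with $a$ a $0$-order symbol in $\omega$ on an annular region, uniformly in $R \ge R_1$ and $|\tau| \le \varepsilon_M'$. The gradient $\omega \mapsto \nabla_\omega\varphi(\tau,x,\omega)$ parametrizes a piece of the Finsler quasi-sphere of radius $|\tau|$ about $x$ — the $d_M^+$- or $d_M^-$-sphere according to the direction of propagation — because the Hamilton flow of $p$ projects onto the Finsler geodesic flow; hence $\varphi(\tau,x,\omega) - y\cdot\omega$ has a critical point in $\omega$ only when $\tau \pm d_M^\pm(x,y) = 0$, and otherwise $|\nabla_\omega(\varphi - y\cdot\omega)| \gtrsim \min_\pm|\tau \pm d_M^\pm(x,y)|$, so repeated non-stationary integration by parts produces the factor $\max_\pm\langle R|\tau \pm d_M^\pm(x,y)|\rangle^{-N}$ for every $N$. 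This already handles every $(x,y)$ with $y$ off the light cone of $x$, in particular those with $d_M(x,y)$ bounded below.

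It remains to treat $y$ near the light cone of $x$. If $R\, d_M(x,y) \lesssim 1$ the trivial bound $O(R^d)$ matches \eqref{star}; if $R\, d_M(x,y) \gg 1$, so $|\tau| \sim d_M(x,y) \gg 1/R$, I would apply stationary phase in the $d-1$ directions tangent to the quasi-sphere, the radial direction being non-oscillatory and contributing a bounded factor. Here the curvature hypothesis is essential: the transverse Hessian of $\varphi$ at the critical point is non-degenerate with $|\det|^{1/2} \sim |\tau|^{(d-1)/2}$ \emph{precisely because} the cospheres $S_x^*$ of $p$ — equivalently, by Legendre duality and Hadamard's ovaloid theorem, the indicatrices and small geodesic spheres of $F$ — have non-vanishing Gaussian curvature, uniformly over the compact $M$. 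Stationary phase then yields $R^d (R|\tau|)^{-(d-1)/2} \sim R^d \langle R\, d_M(x,y)\rangle^{-(d-1)/2}$, and the residual rapid decay in $\langle R|\tau \pm d_M^\pm(x,y)|\rangle$ comes from the value of the phase at the critical point, which by Euler's relation for the homogeneous $\varphi$ equals, up to sign, the signed distance from $y$ to the quasi-sphere. Summing the finitely many pieces of the parametrix gives \eqref{star}.

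The analytic scaffolding above — short-time parametrix, rescaling, stationary and non-stationary phase — is classical; the step I expect to be the main obstacle is the geometric one: showing that the eikonal phase for $p$ is governed by the asymmetric Finsler quasi-distances $d_M^\pm$, so its critical set is exactly the quasi-sphere and its critical value is the signed distance to it, and that the transverse Hessian inherits non-degeneracy, all with \emph{uniform} quantitative control over $M$ and over $|\tau| \le \varepsilon_M'$ from the cosphere-curvature hypothesis. Establishing this needs just enough Finsler geometry — the geodesic flow as the Hamilton flow of $p$, convexity and non-vanishing curvature of small quasi-spheres, and smooth \emph{approximate} normal coordinates (genuine Finsler normal coordinates being only $C^1$ at the origin) — which is developed in Subsection \ref{critpointsection}. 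A secondary, persistent annoyance is the bookkeeping of signs relating the sign of $\tau$, the coupled $\pm$ in $d_M^\pm$, and the windows $I_{t_0}$ and $I_{t_0 - t_1}$; the symmetric distance $d_M$ and the $\max_\pm$ in the statement serve as buffers against it.
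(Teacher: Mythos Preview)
Your proposal is correct and follows essentially the same route as the paper. Both arguments reduce the quasi-orthogonality estimate to the pointwise one via the group property of $e^{2\pi i t P}$ (the paper writes $\langle S_0,S_1\rangle = \int c(t)\,\langle (Q^2 e^{2\pi i t P} Q^2) u_0, u_1\rangle$ with $c = c_0 * \widetilde{c_1}$, exactly your commutation trick), then use the Lax--H\"{o}rmander parametrix and stationary phase, with the Finsler-geometric critical-point analysis of Subsection~\ref{critpointsection} supplying the non-degeneracy of the Hessian and the identification of critical values with $\pm d_M^\pm$.

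The only real difference is organizational. You isolate a clean fixed-time kernel bound \eqref{star} for $\varrho(P/R)e^{2\pi i \tau P}$ and then integrate $c$ and $u$ out in $L^1$. The paper instead carries the full integral for $S_\alpha$ through, integrating away the ``highly localized'' variables (including the $t$-integral over $I_{t_0}$) to arrive directly at an oscillatory integral $R^d\int a(x,\xi)\,e^{2\pi i R[\phi(x,x_0,\xi)+t_0 p(x_0,\xi)]}\,d\xi$; it then passes to polar-type coordinates on the cosphere $S_{x_0}^*$, applies stationary phase in the angular variable via Proposition~\ref{triangleLemma} (two critical points, values $\pm d_M^\pm(x_0,x)$, Hessian $\gtrsim d_M^\pm$), and finally integrates by parts in the radial variable to produce $\langle R|t_0 \pm d_M^\pm|\rangle^{-K}$. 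Your ``tangential stationary phase plus radial non-oscillation'' is the same computation in different words. The geometric core you flag as the main obstacle---connecting the eikonal phase to the quasi-distances $d_M^\pm$ with uniform Hessian control---is precisely what the paper proves as Lemma~\ref{HamiltonLemma} and Proposition~\ref{triangleLemma}.
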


%\begin{remark} The choice of balls $B(x_0,1/R)$ in the statement above is somewhat arbitrary. We assume we have fixed some choice of open sets $B(x,\delta)$ in $M$ for each $x \in M$ and $0 < \delta < 1$, such that for each coordinate chart $F: U \to \RR^d$ and each compact set $K \subset U$, there exists $C > 1$ such that for all $x \in K$ and $0 < \delta < 1$,
%%
%\begin{equation}
%    B(F(x), C^{-1} \delta) \subset F(B(x,\delta)) \subset B(F(x), C \delta),
%\end{equation}
%
%and where the balls in $\RR^d$ are the usual Euclidean balls. The particular choice given will only effect the magnitude of $\varepsilon_M$ and the implicit constants in the statement of the proposition. Fixing an arbitrary Riemannian metric on $M$ and then letting $B(x,\delta)$ be the resulting metric balls will suffice for our purposes.
%\end{remark}

%    \begin{figure}[h]
%        \centering
%        \includegraphics[width=0.3\textwidth]{TangenciesOnSphere.png}
%    \end{figure}

\begin{remark}
Estimate \eqref{AWOICJAWOIVJEAO120321903120938} is a variable coefficient analogue of Lemma 3.3 of \cite{HeoandNazarovandSeeger}. The pointwise estimate tells us that the function $S$ is concentrated on a geodesic annulus of radius $|t_0|$ centered at $x_0$ and thickness $O(1/R)$. %The annulus has thickness $O(1/R)$, and on this annulus the function $S$ has magnitude at most $O(R^{\frac{d+1}{2}} |t_0|^{- \frac{d-1}{2}})$.
The quasi-orthogonality estimate tells us that the two functions $S_0$ and $S_1$ are only significantly correlated with one another if the two annuli on which the majority of the support of $S_0$ and $S_1$ lie are internally or externally tangent to one another, depending on whether $t_0$ and $t_1$ have the same or opposite sign respectively. %, and then
%
%\begin{equation} \label{DIOAWJDOIWJAFOIJWAFcCw}
%    |\langle S_0, S_1 \rangle| \lesssim R^{\frac{d+1}{2}} |t_0 - t_1|^{- \frac{d-1}{2}}.
%\end{equation}
%
%, though the upper bounds here look superficially different, because here we are using the half wave equation to define our functions $\{ S_\nu \}$, whereas in \cite{HeoandNazarovandSeeger} the analogous functions are simply defined by taking smooth functions adapted to certain annuli. Normalizing and rescaling appropriately causes the bounds to match.
\end{remark}

\begin{proof}[Proof of Proposition \ref{theMainEstimatesForWave}]

To simplify notation, in the following proof we will suppress the use of $R$ as an index, for instance, writing $Q$ for $Q_R$. For both the pointwise and quasi-orthogonality estimates, we want to consider the operators in coordinates, so we can use the \emph{Lax-H\"{o}rmander Parametrix} to understand the wave propagators in terms of various oscillatory integrals, though we use a slight variant of the usual parametrix which will help us in the stationary phase arguments which occur when manipulating the resulting oscillatory integrals.

Start by covering $M$ by a finite family of suitably small open sets $\{ V_\alpha \}$, such that for each $\alpha$, there is a coordinate chart $U_\alpha$ compactly containing $V_\alpha$ and with $N(V_\alpha, 1.1 \varepsilon_M) \subset U_\alpha$. Let $\{ \eta_\alpha \}$ be a partition of unity subordinate to $\{ V_\alpha \}$. It will be convenient to define $V_\alpha^* = N(V_\alpha, 0.01 \varepsilon_M )$ for each $\alpha$. The next lemma allows us to approximate the operator $Q$, and the propagators $e^{2\pi i t P}$ with operators which have more explicit representations in the coordinate system $\{ U_\alpha \}$, with an error negligible to the results of Proposition \ref{theMainEstimatesForWave}.

% TODO: Diagram Here?

\pagebreak[3]

\begin{lemma} \label{pseudodifferentialCoordinateLemma}
    Suppose $\varepsilon_M$ is suitably small, depending on $M$ and $P$. For each $\alpha$, and $|t| \leq \varepsilon_M$, there exists Schwartz operators $Q_\alpha$ and $W_\alpha(t)$, each with kernels supported on $U_\alpha \times V^*_\alpha$, such that the following holds:
    \begin{itemize}[leftmargin=8mm]
    \setlength\itemsep{0.5em}
        \item For any $u: M \to \CC$ with $\| u \|_{L^1(M)} \leq 1$ and $\text{supp}(u) \subset V_\alpha^*$,
        \begin{equation}
            \text{supp}(Q_{\alpha} u) \subset N(\text{supp}(u), 0.01 \varepsilon_M),
        \end{equation}
        \begin{equation}
            \supp(W_{\alpha}(t) u) \subset N(\supp(u), 1.01 \varepsilon_M ),
        \end{equation}
        \begin{equation}
            \| (Q - Q_{\alpha}) u \|_{L^\infty(M)} \lesssim_N R^{-N} \quad\text{for all $N \geq 0$},
        \end{equation}
        and
        \begin{equation}
            \left\| \big( Q_{\alpha} \circ ( e^{2 \pi i t P} - W_{\alpha}(t) ) \circ Q_{\alpha} \big) \{ u \} \right\|_{L^\infty(M)} \lesssim_N R^{-N} \quad\text{for all $N \geq 0$}.
        \end{equation}

        \item In the coordinate system of $U_\alpha$, the operator $Q_{\alpha}$ is a pseudo-differential operator of order zero given by a symbol $\sigma_{\alpha}(x,\xi)$, where
        \begin{equation}
            \supp(\sigma_{\alpha}) \subset \{ \xi \in \RR^d: R/8 \leq |\xi| \leq 8R \},
        \end{equation}
        and $\sigma_{\alpha}$ satisfies derivative estimates of the form
        \begin{equation}
            |\partial^\lambda_x \partial^\kappa_\xi \sigma_{\alpha}(x,\xi)| \lesssim_{\lambda,\kappa} R^{-|\kappa|}.
        \end{equation}

        \item In the coordinate system $U_\alpha$, the operator $W_\alpha(t)$ has a kernel $W_\alpha(t,x,y)$ with an oscillatory integral representation
        \begin{equation}
            W_\alpha(t,x,y) = \int s(t,x,y,\xi) e^{2 \pi i [ \phi(x,y,\xi) + t p(y,\xi) ]}\; d\xi,
        \end{equation}
        where the function $s$ is compactly supported in $U_\alpha$, such that
        \begin{equation}
            \supp_{t,x,y}(s) \subset \{ (t,x,y) : |x - y| \leq C |t| \}
        \end{equation}
        for some constant $C > 0$, suc that
        \begin{equation}
            \supp_\xi(s) \subset \{ \xi \in \RR^d : R/8 \leq |\xi| \leq 8R \}, 
        \end{equation}
        and such that the function $s$ satisfies derivative estimates of the form
        \begin{equation}
            | \partial_{t,x,y}^\lambda \partial_\xi^\kappa s | \lesssim_{\lambda, \kappa} R^{- |\kappa|}.
        \end{equation}
        The phase function $\phi$ is smooth and homogeneous of degree one in the $\xi$ variable, and solves the \emph{eikonal equation}
        \begin{equation}
            p \left( x, \nabla_x \phi(x,y,\xi) \right) = p(y,\xi),
        \end{equation}
        subject to the constraint that $\phi(x,y,\xi) = 0$ if $\xi \cdot (x - y) = 0$.
        % \in \Sigma(y,\xi)$, where
        %
        %\[ \Sigma(y,\xi) = \{ \exp_y(t \eta) : \eta \in T_y^* M\ \text{and}\ \langle \xi, \eta \rangle_g = 0 \}. \]
    \end{itemize}
\end{lemma}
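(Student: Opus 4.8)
The plan is to build the two families of operators $Q_\alpha$ and $W_\alpha(t)$ from two standard constructions --- the functional calculus for pseudodifferential operators for $Q_R$, and the Lax--H\"{o}rmander parametrix for the half-wave group $e^{2\pi i tP}$ at small times --- while tracking uniformity in $R$ by working throughout in the $R$-adapted symbol classes in which each $\xi$-derivative gains a factor $R^{-1}$. All the qualitative mechanisms (localization of the chart, finite propagation speed, the eikonal/transport recursion) are classical; the content of the lemma is that they can be run with constants uniform in the large parameter $R$.

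First I would treat $Q_\alpha$. By the Helffer--Sj\"{o}strand formula (or Seeley's functional calculus) applied to $q(P/R)$, the operator $Q_R$ is a pseudodifferential operator whose full symbol is $q(p(x,\xi)/R)$ plus terms of lower order in $\xi$; since $P$ is elliptic of order one one has $p(x,\xi)\sim|\xi|$, so on $\mathrm{supp}(q)\subset[1/2,4]$ the symbol lives in a region $R/8\le|\xi|\le 8R$ and satisfies $|\partial_x^\lambda\partial_\xi^\kappa\sigma|\lesssim_{\lambda,\kappa} R^{-|\kappa|}$. In particular the kernel of $Q_R$ obeys $|K_R(x,y)|\lesssim_N R^d\langle R\,d_M(x,y)\rangle^{-N}$ for every $N$. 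I then define $Q_\alpha$ by transporting $Q_R$ to the coordinate chart $U_\alpha$ and multiplying its kernel by a smooth cutoff supported in $\{(x,y)\in U_\alpha\times V_\alpha^*: d_M(x,y)\le 0.01\varepsilon_M\}$ and equal to $1$ near the diagonal. The support statements and the symbol bounds are then immediate, while $(Q_R-Q_\alpha)u$ is exactly the contribution of $K_R(x,y)$ at distance $>0.01\varepsilon_M$ from $\mathrm{supp}(u)$, which by the kernel decay and $\|u\|_{L^1}\le 1$ is $\lesssim_N R^d\langle R\varepsilon_M\rangle^{-N}\lesssim_N R^{-N}$ in $L^\infty$ since $\varepsilon_M$ is a fixed geometric constant.

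Next I would construct $W_\alpha(t)$. Conjugating by $Q_\alpha$ confines all frequencies to $|\xi|\sim R$, so I may look for a parametrix of $e^{2\pi itP}$ on such frequencies, for $|t|\le\varepsilon_M$, of the \emph{factored} form $W_\alpha(t,x,y)=\int e^{2\pi i[\phi(x,y,\xi)+tp(y,\xi)]}s(t,x,y,\xi)\,d\xi$. The observation that makes this ansatz work is that if $\phi$ solves the stationary eikonal equation $p(x,\nabla_x\phi(x,y,\xi))=p(y,\xi)$, then $\psi:=\phi+tp(y,\xi)$ automatically solves the time-dependent Hamilton--Jacobi equation $\partial_t\psi=p(x,\nabla_x\psi)$, so the usual Lax--H\"{o}rmander transport recursion applies verbatim, with the further advantage that the phase is linear in $t$. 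I solve the stationary eikonal equation by the method of characteristics, integrating the Hamiltonian flow of $p$ with Cauchy data $\phi=0$ on the hyperplane $\{\xi\cdot(x-y)=0\}$; this hyperplane is non-characteristic because the normalization forces $\nabla_x\phi|_{x=y}\parallel\xi$ and then homogeneity gives $\nabla_x\phi|_{x=y}=\xi$, which is transverse to it. Hence $\phi$ exists, is smooth and homogeneous of degree one in $\xi$ for $|x-y|\lesssim\varepsilon_M$, and $\phi(x,y,\xi)=(x-y)\cdot\xi+O(|x-y|^2|\xi|)$ near the diagonal, so that $W_\alpha(0)$ can be initialized to agree with the identity on frequencies $|\xi|\sim R$ up to a smoothing error. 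Plugging the ansatz into $\partial_t-2\pi iP$, cancelling the principal symbol via the eikonal equation, and solving the resulting transport equations for an amplitude $s\sim\sum_j s_j$ with $s_j$ of order $-j$ (truncated at a large finite order, after inserting a frequency cutoff $\rho(\xi/R)$), produces $s$ with $\mathrm{supp}_\xi(s)\subset\{R/8\le|\xi|\le 8R\}$ and $|\partial_{t,x,y}^\lambda\partial_\xi^\kappa s|\lesssim_{\lambda,\kappa} R^{-|\kappa|}$. The spatial support condition $\mathrm{supp}_{t,x,y}(s)\subset\{|x-y|\le C|t|\}$ is finite propagation speed: for $|x-y|\gg|t|$ one has $\nabla_\xi\psi=\nabla_\xi\phi+t\nabla_\xi p\approx(x-y)$ bounded away from $0$, so integration by parts in $\xi$ makes that region contribute $O(R^{-N})$ and it may be cut away, and $\varepsilon_M$ is chosen small enough relative to the geometry of $M$ and the charts that $\{d_M(x,y)\le C\varepsilon_M\}$ remains inside the $1.01\varepsilon_M$-enlargement of $U_\alpha\times V_\alpha^*$. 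The error estimate $\|(Q_\alpha\circ(e^{2\pi itP}-W_\alpha(t))\circ Q_\alpha)\{u\}\|_{L^\infty}\lesssim_N R^{-N}$ then follows from Duhamel's formula: $W_\alpha$ solves $(\partial_t-2\pi iP)W_\alpha=E_\alpha(t)$ with $E_\alpha$ smoothing (gaining $R^{-N}$ at frequencies $\sim R$), and $W_\alpha(0)-\mathrm{Id}$ is likewise smoothing microlocally, so after conjugation by $Q_\alpha$ the difference has kernel of size $O(R^{-N})$, and Sobolev embedding on the compact manifold $M$ upgrades this to the desired $L^1\to L^\infty$ bound.

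I expect the main obstacle to be the third paragraph: making the factored-phase parametrix fully rigorous. Concretely, one must (i) solve the stationary eikonal equation with the slightly unusual normalization $\phi=0$ on $\{\xi\cdot(x-y)=0\}$ and verify smoothness, homogeneity, and that $\phi$ is defined on a \emph{uniform} $\varepsilon_M$-neighborhood of the diagonal in each chart; (ii) check that the transport recursion for the factored phase closes with amplitudes obeying the stated $R$-scaled bounds; and (iii) justify the spatial truncation and the passage from the microlocal smoothing of $E_\alpha$ and $W_\alpha(0)-\mathrm{Id}$ to genuine $O(R^{-N})$ kernel bounds after conjugation by $Q_\alpha$. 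Once these are in place the remaining assertions of the lemma are routine bookkeeping within the $R$-scaled symbol calculus.
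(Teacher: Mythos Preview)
Your proposal is correct, and for $W_\alpha(t)$ it tracks the paper closely: the paper cites the Lax--H\"{o}rmander parametrix (Sogge, Theorem~4.1.2) with precisely the factored phase $\phi(x,y,\xi)+tp(y,\xi)$ and the eikonal normalization $\phi=0$ on $\{\xi\cdot(x-y)=0\}$, and then obtains the frequency localization $|\xi|\sim R$ by composing with $Q_\alpha$ and integrating by parts in the intermediate spatial variable, rather than by building the cutoff into the transport recursion as you suggest. The two mechanisms are equivalent, and your identification of the stationary eikonal equation as the device that makes the factored phase linear in $t$ is exactly the point.

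Where you differ is in the construction of $Q_\alpha$. You realize $q(P/R)$ as a pseudodifferential operator directly via Helffer--Sj\"{o}strand or Seeley functional calculus, then cut off the kernel near the diagonal. The paper instead writes $Q=\int R\widehat{q}(Rt)\,e^{2\pi itP}\,dt$, inserts a time cutoff $\rho(t)$, replaces $e^{2\pi itP}$ by the wave parametrix $\tilde{W}_\alpha(t)$ already in hand, and evaluates the $t$-integral by expanding $\rho(t)s_0(t,x,y,\xi)$ in a Fourier series in $t$; the large-time and smooth-remainder pieces are dispatched by the rapid decay of $\widehat{q}$ and the smoothness of the parametrix error respectively. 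Your route is shorter and delivers the standard pseudodifferential phase $(x-y)\cdot\xi$ immediately, which is in fact the form used in the subsequent stationary-phase argument; the paper's route has the advantage of relying on no machinery beyond the wave parametrix itself, so everything is built from a single ingredient. Either way the $R$-scaled symbol bounds and the $O(R^{-N})$ off-diagonal kernel estimate fall out the same way.
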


We relegate the proof of Lemma \ref{pseudodifferentialCoordinateLemma} to the appendix, the proof being a technical and mostly conventional calculation involving a manipulation of oscillatory integrals using integration by parts.

Let us now proceed with the proof of the pointwise bounds in Proposition \ref{theMainEstimatesForWave} using Lemma \ref{pseudodifferentialCoordinateLemma}. Given $u: M \to \CC$, write $u = \sum_\alpha u_\alpha$, where $u_\alpha = \eta_\alpha u$. Lemma \ref{pseudodifferentialCoordinateLemma} implies that if we define
\begin{equation}
    S_\alpha = \int c(t) (Q_{j,\alpha} \circ W_{j,\alpha}(t) \circ Q_{j,\alpha}) \{ u_\alpha \}\; dt,
\end{equation}
then for all $N \geq 0$,
\begin{equation} \label{parametrixerrroestimate}
    \left\| S - \textstyle\sum\nolimits_\alpha S_\alpha \right\|_{L^\infty(M)} \lesssim_N R^{-N}.
\end{equation}
This error is negligible to the pointwise bounds we want to obtain in Proposition \ref{theMainEstimatesForWave} if we choose $N \geq K - \frac{d+1}{2}$, since the compactness of $M$ implies that $d_M(x,x_0) \lesssim 1$ for all $x \in M$, and so
\begin{equation}
    R^{-N} \lesssim R^{\left( \frac{d+1}{2} - K \right)} \lesssim \frac{R^{d}}{\langle R d_M(x,x_0) \rangle^{\frac{d-1}{2}}} \big\langle R \big| |t_0| \pm d_M^\pm(x,x_0) \big| \big\rangle^{-K}.
\end{equation}
We bound each of the functions $\{ S_\alpha \}$ separately, combining the estimates using the triangle inequality. We continue by expanding out the implicit integrals in the definition of $S_\alpha$. In the coordinate system $U_\alpha$, we can write
\begin{equation} \label{SalphaDefinition}
\begin{split}
    S_\alpha(x) &= \int c(t) \sigma(x,\eta) e^{2 \pi i \eta \cdot (x - y)}\\[-6 pt]
    &\quad\quad\quad s(t,y,z,\xi) e^{2 \pi i [ \phi(y,z,\xi) + t p(z,\xi) ]}\\
    &\quad\quad\quad\quad \sigma(z,\theta) e^{2 \pi i \theta \cdot (z - w)} (\eta_\alpha u)(w)\\
    &\quad\quad\quad\quad\quad dt\; dy\; dz\; dw\; d\theta\; d\xi\; d\eta.
\end{split}
\end{equation}
The integral in \eqref{SalphaDefinition} looks highly complicated, but can be simplified considerably by noticing that most variables are quite highly localized. In particular, oscillation in the $\eta$ variable implies that the amplitude is negligible unless $|x - y| \lesssim 1/R$, oscillation in the $\theta$ variable implies that the amplitude is negligible unless $|z - w| \lesssim 1/R$, and the support of $u$ implies that $|w - x_0| \lesssim 1/R$. Define
\begin{equation} \label{DIOAJVIOEJAV8318923}
    k_1(t,x,z,\xi) = \int \sigma(x,\eta) s(t,y,z,\xi) e^{2 \pi i [ \eta \cdot (x - y) + \phi(y,z,\xi) - \phi(x,z,\xi) ]}\; dy\; d\eta,
\end{equation}
and
\begin{equation} \label{DWAIOCWOAIJFAIO213123}
\begin{split}
    k_2(t,\xi) &= \int k_1(t,x,z,\xi) \sigma(z,\theta) (\eta_\alpha u)(w)\\
    &\quad\quad\quad e^{2 \pi i [ \theta \cdot (z - w) + \phi(x,z,\xi) - \phi(x,x_0,\xi) + t p(z,\xi) - t p(x_0,\xi) ]}\; d\theta\; dw,
\end{split}
\end{equation}
and then set
\begin{equation} \label{oaisdjoai9091390}
\begin{split}
    a(x,\xi) &= \int c(t) k_2(t,R \xi) e^{2 \pi i [ (t - t_0) p(x_0, R \xi) ]} \; dt\; dz,
\end{split}
\end{equation}
so that $\text{supp}_\xi(a) \subset \{ \xi : 1/8 \leq |\xi| \leq 8 \}$, and
\begin{equation}
    S_\alpha(x) = R^d \int a(x, \xi) e^{2 \pi i R [ \phi(x,x_0,\xi) + t_0 p(x_0,\xi) ]}\; d\xi.
\end{equation}
Integrating by parts in $\eta$ and $\theta$ in \eqref{DIOAJVIOEJAV8318923} and \eqref{DWAIOCWOAIJFAIO213123} gives that for all multi-indices $\alpha$,
\begin{equation} \label{ioqejfoieqjf13412}
    |\partial_\xi^\alpha k_1(t,x,z,\xi)| \lesssim_\alpha R^{-|\alpha|} \quad\text{and}\quad |\partial_\xi^\alpha k_2(z,\xi)| \lesssim_{\alpha} R^{-|\alpha|}.
\end{equation}
Using the bounds in \eqref{ioqejfoieqjf13412} with the fact that $\text{supp}(c)$ is contained in a $O(1/R)$ neighborhood of $t_0$ in \eqref{oaisdjoai9091390} then implies $|\partial_\xi^\alpha a(x,\xi)| \lesssim_\alpha 1$ for all $\alpha$.
%
%We now find $\lambda: V_\alpha^* \times S^{d-1} \to (0,\infty)$ such that
%
%\[ p(x_0, \lambda(x_0,\xi) \xi ) = 1, \]
%
%and switch coordinate systems. If $\tilde{a}(x,\rho, \xi) = a(x, \rho \lambda(x_0) \xi ) (J_\xi \lambda)(x_0,\xi)$,
%a smooth family of homogeneous diffeomorphisms $F_{x_0}: \RR^d \to \RR^d$ for $x_0 \in V_\alpha^*$ such that $p(x_0,F_{x_0}(\xi)) = |\xi|$ for all $x \in \RR^d$. Then if $\tilde{a}(x,\rho, \eta) = a(x, \rho F_{x_0}(\eta) ) JF_{x_0}(\eta)$,
%then a change of variables $\xi = \lambda(x_0,\xi) \rho \eta$ for $|\eta| = 1$ gives that
%
%\begin{align*} 
%    &R^{d} \int a(x,\xi) e^{2 \pi i R [ \phi(x,x_0,\xi) + t_0 p(x_0,\xi) ]}\\
%    &\quad\quad\quad = R^d \int_0^\infty \rho^{d-1} \int_{|\eta| = 1} \tilde{a}(x,\rho,\eta) e^{2 \pi i R \rho [ \lambda(x_0,\eta) \phi(x, x_0, \eta) + t_0 ]}\; d\eta\; d\rho.
%\end{align*}
%

We now account for angular oscillation of the integral by working in a kind of 'polar coordinate' system. First we find $\lambda: V_\alpha^* \times S^{d-1} \to (0,\infty)$ such that for all $|\xi| = 1$,
\begin{equation}
    p(x_0, \lambda(x_0,\xi) \xi) = 1.
\end{equation}
If $\tilde{a}(x,\rho, \eta) = a(x, \rho \lambda(x_0) \xi ) \det [ \lambda(x_0,\xi) I + \xi (\nabla_\xi \lambda)(x_0,\xi)^T ]$, then
%
% F_{x_0}(xi) = L(x_0,xi) xi
% DF = L(x_0,xi) I + D_xi lambda
%
\begin{equation}
    S_\alpha(x) = R^d \int_0^\infty \rho^{d-1} \int_{|\xi| = 1} \tilde{a}(x,\rho, \xi) e^{2 \pi i R \rho [ t_0 + \phi(x, x_0, \lambda(x_0,\xi) \xi) ]}\; d\xi\; d\rho.
\end{equation}
Define $\Phi: S^{d-1} \to \RR$ by setting $\Phi(\xi) = \phi(x,x_0, \lambda(x_0,\xi) \xi)$.    
We claim that, in the $\xi$ variable, $\Phi$ has exactly two critical points $|\xi^+|^{-1} \xi^+$ and $|\xi^-|^{-1} \xi^-$, where $\xi^+ \in S_{x_0}^*$ is the covector corresponding to the forward geodesic from $x_0$ to $x$, and $\xi^- \in S_{x_0}^*$ is the covector corresponding to the backward geodesic from $x_0$ to $x$. Moreover,
\begin{equation}
    \Phi(|\xi^+|^{-1} \xi^+) = d_M^+(x_0,x) \quad\text{and}\quad \Phi(|\xi^-|^{-1} \xi^-) = - d_M^-(x_0,x),
\end{equation}
and the Hessian at each of these points is non-degenerate, with each eigenvalue of the Hessian having magnitude exceeding a constant multiple of $d_M^{\pm}(x_0,x)$. We prove that these properties hold for $\Phi$ in Proposition \ref{triangleLemma} of the following section, via a series of geometric arguments. It then follows from the principle of stationary phase that
\begin{equation}
    S_\alpha(x) = \sum_{\pm} \frac{R^{d}}{\left\langle R d_M^{\pm}(x_0,x) \right\rangle^{\frac{d-1}{2}}} \int_0^\infty \rho^{\frac{d-1}{2}} a_{\pm}(x,\rho) e^{2 \pi i R \rho [ t_0 \pm d_M^{\pm}(x_0,x)]}\; d\rho,
\end{equation}
where $a_{\pm}$ is supported on $|\rho| \sim 1$, and for all $\alpha$, $|\partial_\rho^\alpha a_{\pm}| \lesssim_\alpha 1$. Integrating by parts in the $\rho$ variable if $t_0 \pm d_M^{\pm}(x_0,x)$ is large, we conclude that
%
% t_0 - d^
%
% 
% 
%
\begin{equation} \label{finaloscillatoryintegralbound}
\begin{split}
    |S_\alpha(x)| \lesssim \frac{R^{d}}{\left\langle R d_M(x_0,x) \right\rangle^{\frac{d-1}{2}}} \sum_{\pm} \big\langle R |t_0 \pm d_M(x_0,x)| \big\rangle^{-K}.
\end{split}
\end{equation}
Combining \eqref{parametrixerrroestimate} and \eqref{finaloscillatoryintegralbound} completes the proof of the pointwise bounds.

The quasi-orthogonality arguments are obtained by a largely analogous method, and so we only sketch the proof. One major difference is that we can use the self-adjointness of the operators $Q$, and the unitary group structure of $\{ e^{2 \pi i t P} \}$, to write
\begin{equation}
\begin{split}
    \langle S_0, S_1 \rangle &= \int c_0(t) c_1(s) \big\langle (Q \circ e^{2 \pi i t P} \circ Q) \{ u_0 \}, (Q \circ e^{2 \pi i s P} \circ Q) \{ u_1 \} \big\rangle\\
    &= \int c_0(t) c_1(s) \big\langle (Q^2 \circ e^{2 \pi i (t - s) P} \circ Q^2) \{ u_0 \}, u_1 \big\rangle\\
    &= \int c(t) \big\langle (Q^2 \circ e^{2 \pi i t P} \circ Q^2) \{ u_0 \}, u_1 \big\rangle,
\end{split}
\end{equation}
where $c(t) = \int c_0(u) c_1(u - t)\; du$, by Young's inequality, satisfies
\begin{equation}
    \| c \|_{L^1(\RR)} \lesssim \| c_0 \|_{L^1(\RR)} \| c_1 \|_{L^1(\RR)} \leq 1
\end{equation}
and $\supp(c) \subset [ (t_0 - t_1) - 4/R, (t_0 - t_1) + 4/R]$. After this, one proceeds exactly as in the proof of the pointwise estimate. We write the inner product as
\begin{equation}
    \sum\nolimits_\alpha \int c(t) \big\langle (Q^2 \circ e^{2 \pi i t P} \circ Q^2) \{ \eta_\alpha u_0 \}, u_1 \big\rangle.
\end{equation}
Then we use Lemma \ref{pseudodifferentialCoordinateLemma} to replace $Q^2 \circ e^{2 \pi i tP} \circ Q^2$ with $Q_{\alpha}^2 \circ W_{\alpha}(t) \circ Q_{\alpha}^2$ using Lemma \ref{pseudodifferentialCoordinateLemma}, modulo a negligible error. The integral
\begin{equation}
    \sum\nolimits_\alpha \int c(t) \big\langle (Q_{\alpha}^2 \circ W_{\alpha}(t) \circ Q_{\alpha}^2) \{ \eta_\alpha u_0 \}, u_1 \big\rangle
\end{equation}
is then only non-zero if both the supports of $u_0$ and $u_1$ are compactly contained in $U_\alpha$. Thus we can switch to the coordinate system of $U_\alpha$, in which we can express the inner product by oscillatory integrals of the exact same kind as those occurring in the pointwise estimate. Integrating away the highly localized variables as in the pointwise case, and then applying stationary phase in polar coordinates proves the required estimates.
\end{proof}

\subsection{Some Facts About Finsler geometry} \label{BriefIntroduction}

All that remains to conclude the proof of the quasi-orthogonality estimates is to verify the required properties of the critical points of the phase function $\Phi$ which occurs in the proof of Proposition \ref{theMainEstimatesForWave}. To do this we exploit the Finsler geometry on the manifold, and so before we carry out this task we take out the time to precisely introduce the concepts of Finsler geometry that occur in the proof. We will work in a particular coordinate system $U_0$, so we introduce the concepts in coordinates to keep things concrete.

 %This subsection may be safely skipped if one only wants to follow the proof of Proposition \ref{theMainEstimatesForWave} in the special case where $p$ is induced by a Riemannian metric as in \eqref{pgivesRiemannian}, and thus gives $M$ the structure of a Riemannian manifold.

Let $U_0 \subset \RR^d$ be an open set. Then we may identify the tangent space $T U_0$ and cotangent space $T^* U_0$ with $U_0 \times \RR^d$, and each fibre $T_x U_0$ and $T^*_x U_0$ with $\RR^d$, though as is standard in differential we use upper indices to denote the coordinates of vectors, and lower indices to denote the coordinates of covectors. A \emph{Finsler metric} on $U_0$ is a homogeneous function $F: T U_0 \to [0,\infty)$ which is smooth on $TU_0 - 0$, and strictly convex, in the sense that the Hessian matrix with entries
\begin{equation} \label{FinslerMetricCoefficients}
    g_{ij}(x,v) = \frac{1}{2} \frac{\partial^2 F^2}{\partial v^i \partial v^j}(x,v)
\end{equation}
is positive definite for all $x \in U_0$ and $v \in T_x U_0 - \{ 0 \}$.% giving rise to an inner product on $T_x U_0$ which approximates the Finsler metric near $(x,v)$ up to second order in the $v$ variable.

%For a fixed tangent vector $T = (x,v)$, the coefficients $g_{jk}(x,v)$ then give rise to an inner product on $T_x M$ that approximates the Finsler metric up to second order at $(x,v)$, which we will denote by $g_T$. As alluded to before, Riemannian tools normally defined on the vector bundle $TM$ often have analogues in the setting of Finsler manifolds, but instead defined on the vector bundle $\mathscr{E} = \pi^*(TM)$ over $TM$. In particular, the $g_T$ form a smoothly varying inner product on the fibres of $\mathscr{E}$.

Given a metric $F$ on $M$, we define a dual metric $F_*: T^*M \to [0,\infty)$ by setting $F_*(x,\xi) = \sup \{ \xi(v) : F(x,v) = 1 \}$. Then $F_*$ is also strictly convex, so that for each $x \in U_0$ and each covector $\xi \in \RR^d - \{ 0 \}$, the Hessian matrix with entries
\begin{equation}
    g^{ij}(x,\xi) = \frac{1}{2} \frac{\partial^2 F_*^2}{\partial \xi_i \partial \xi_j}(x,\xi)
\end{equation}
is positive definite. For $x \in U_0$, the \emph{Legendre transform} is then defined to the smooth map $\mathcal{L}_x: T_x U_0 - \{ 0 \} \to T^*_x U_0 - \{ 0 \}$ given by the formula $(\mathcal{L}_x v)_i = \sum_j g_{ij}(x,v) v^j$, defined so that $\sum (\mathcal{L}_x v)_j w^j = \sum g_{ij}(x,v) v^j w^j$ for any vector $w \in \RR^d$. Then the matrix with entries $g_{ij}(x,v)$ is the inverse of the matrix with entries $g^{ij}(x, \mathcal{L}_x v)$, so that for a covector $\xi \in \RR^d - \{ 0 \}$, $(\mathcal{L}_x^{-1} \xi)^i = \sum g^{ij}(x,\xi) \xi_j$. The Legendre transform is the Finsler variant of the musical isomorphism in Riemannian geometry, though the Legendre transform is in general only \emph{homogeneous}, rather than linear.

Let us prove in more detail that if $p: T^*M \to [0,\infty)$ is homogenous and satisfies the assumptions of Theorem \ref{CpVersionOfTheorem}, the function $F(x,v) = \sup \{ \xi(v): p(x,\xi) = 1 \}$ gives a Finsler metric on $M$. Working in a coordinate system, we may assume $M = U_0$ is an open subset of $\RR^d$ like above. For each $x \in U_0$, the cosphere $S_x^* = \{ \xi \in T_x^* M : p(x,\xi) = 1 \}$ has non-vanishing Gaussian curvature. Thus all principal curvatures of $S_x^*$ must actually be \emph{positive}, since $S_x^*$ is a compact hypersurface in $T^*_x M$. This follows from a simple modification of an argument found in Chapter 2 of \cite{HeinzHopf}. Indeed, if we fix an arbitrary point $v_0 \in T_x^*M$, and consider the smallest closed ball $B \subset T_x^* M$ centered at $v_0$ and containing $S_x^*$, then the sphere $\partial B$ must share the same tangent plane as $S_x^*$ at some point. All principal curvatures of $\partial B$ are positive, and at this point all principal curvatures of $S_x^*$ must be greater than the principal curvatures of $\partial B$, since $S_x^*$ curves away faster than $\partial B$ in all directions. By continuity, we conclude that the principal curvatures are everywhere positive.  Thus for each $x \in M$ and $\xi \in T_x^* M - \{ 0 \}$, the coefficients $g^{ij}(x,\xi) = (1/2) (\partial^2 p^2 / \partial \xi_i \partial \xi_j)$ form a positive-definite matrix. But inverting the procedure of the last paragraph shows that the dual norm $F(x,v) = \sup\nolimits_{\xi \in S_x^*} \xi(v)$ is also strictly convex, and thus gives a Finsler metric on $M$ with $F_* = p$. %(Chapter 14 of \cite{BaoChern}). 

The theory of geodesics on Finsler manifolds is similar to the Riemannian case, except for the interesting quirk that a geodesic from a point $p$ to a point $q$ need not necesarily be a geodesic when considered as a curve from $q$ to $p$, and so we must consider \emph{forward} and \emph{backward} geodesics. We define the length of a curve $c: I \to U_0$ by the formula $L(c) = \int_I F(c,\dot{c})\; dt$, and use this to define the \emph{forward distance} $d_+: U_0 \times U_0 \to [0,\infty)$ by taking the infima of paths between points. This function is a \emph{quasi-metric}, as it is not necessarily symmetric. We define the \emph{backward distance} $d_-: U_0 \times U_0 \to [0,\infty)$ by setting $d_-(p,q) = d_+(q,p)$. A \emph{constant speed speed forward geodesic} on a Finsler manifold is a curve $c: I \to U_0$ which satisfies the geodesic equation
%$\nabla_{\bar{T}} \bar{T}$ vanishes when restricted the lifted curve $c': I \to TM$.
%
% Nabla_T T = T^a [partial_a T^b] e_b + T^a T^b N_b^c e_c
%           = T^a [ partial_a T^b + T^c N_c^b(T) ] e_b
%           = d^2c^a / dt^2 = - T^j T^k Gamma_{jk}^a
%
\begin{equation} \label{geodesicequation}
    \ddot{c}^a = - \sum\nolimits_{j,k} \gamma^a_{jk}(c,\dot{c}) \dot{c}^j \dot{c}^k \quad\text{for $1 \leq i \leq d$},
\end{equation}
where $\gamma^a_{jk} = \sum_i (g^{ai}/2) ( \partial_{x_k} g_{ij} + \partial_{x_j} g_{ik} - \partial_{x_i} g_{jk} )$ are the \emph{formal Christoffel symbols}. % defined by
%
%\[ \Gamma^a_{jk} = \sum\nolimits_i \frac{g^{ai}}{2} \left( \frac{\delta g_{ij}}{\delta x^k} + \frac{\delta g_{ik}}{\delta x^j} - \frac{\delta g_{jk}}{\delta x^i} \right), \]
%
%and where $\delta / \delta x^a = \partial / \partial x^a - \sum_b N^b_a \partial / \partial v^b$
%Such a curve is precisely one which is an extremal of the length functional $L$.
We call the reversal of such a geodesic a \emph{backward geodesic}. A curve $c$ is a geodesic if and only if it's Legendre transform $(x,\xi) = \mathcal{L}(c,\dot{c})$ satisfies
\begin{equation} \label{FStarHamilton}
    x'(t) = (\partial_\xi F_*)(x,\xi) \quad\text{and}\quad \xi'(t) = - (\partial_x F_*)(x,\xi).
\end{equation}
The equations \eqref{FStarHamilton} are first order ordinary differential equations induced by the Hamiltonian vector field $( \partial_\xi F_*, - \partial_x F_* )$ on $T^* U_0$. Note that in our situation, the dual norm $F_*$ is the principal symbol $p$ of the operator $P$ we are studying, and so \eqref{FStarHamilton} is precisely the Hamiltonian flow alluded to in \eqref{qoiJIOJdoiwajfoiawjfoi}. Sufficiently short geodesics on a Finsler manifold are length minimizing. In particular, there exists $r > 0$ such that if $c$ is a geodesic between two points $x_0$ and $x_1$, and $L(c) < r$, then $d_M^+(x_0,x_1) = L(c)$.

%The analogue of the Riemann curvature tensor on a Finsler manifold is the \emph{$hh$-curvature tensor} $R$, a $(1,3)$ tensor defined on $E$ such that for three sections $X,Y,Z$ of $E$, $R(X,Y) Z$ is a section of $E$, which we define in the appendix. The analogue of sectional curvature in Finsler geometry is the \emph{flag curvature}. To define the curvature, let $T$ be the section of $E = \pi^*(TM)$ given by $T(x,v) = F(v)^{-1} v$, %and let $\omega$ be the section of $E^* = \pi^*(T^* M)$ given in coordinates by $\omega(x,v) = \sum (\partial F / \partial v^j)(x,v) dx^j$.
%This section is often called the \emph{distinguished section} in the literature. Given a section $X$ of $E$, we define the flag curvature $K(T,X)$ to be the function on $TM$ given by
%
%\[ K(T,X) = \frac{g(R(X,T) T, X)}{g(X,X) - g(X,T)^2} \]
%
%where $R$ is the \emph{$hh$-Curvature tensor}, a $(1,3)$ tensor defined on $E$ such that for three sections $X,Y,Z$ of $E$, $R(X,Y) Z$ is the section of $E$ given in coordinates by
%
%\[ R(X,Y) Z = \sum\nolimits_{i,j,k,l} Z^j X^k Y^l \left( \frac{\delta \Gamma^i_{jl}}{\delta x^k} - \frac{\delta \Gamma^i_{jk}}{\delta x^L} + \Gamma^i_{hk} \Gamma^h_{jl} - \Gamma^i_{hl} \Gamma^h_{jk} \right) \frac{\partial}{\partial x^i} \]
%
%

The last piece of technology we must consider before our analysis of critical points are \emph{variation formulas} for arclength. Consider a function $A: (-\varepsilon,\varepsilon) \times [0,1] \to U_0$, such that for each $s$, the function $t \mapsto A(s,t)$ is a constant speed geodesic, and such that all such geodesics begin at the same point, i.e. so that $A(s,0)$ is independent of $s$. Define the vector field $V(s) = (\partial_s A)(s,1)$ and $T(s) = (\partial_t A)(s,1)$. Then if $R(s)$ gives the arclength of the geodesic $t \mapsto A(s,t)$, the first variation formula for energy tells us
\begin{equation} \label{FirstVariationFormula2}
    R(s) R'(s) = \sum\nolimits_{i,j} g_{ij}( A(s,1), T(s) ) V^i(s) T^j(s).
\end{equation}
Details can be found in Chapter 5 of \cite{BaoChern}. We will consider \emph{second variations}, i.e. estimates on $R''$, but we will not need the second variation formula to obtain the estimates we need, relying only on differentiating the equation \eqref{FirstVariationFormula2} rather than explicitly working with Jacobi fields. % We note that the denominator in \eqref{FirstVariationFormula} is precisely the length of the vector $T$, which by the Gauss Lemma for Finsler manifolds (see Lemma 6.1.1 of \cite{BaoChern}) is precisely $R(s)$. Thus we can rewrite \eqref{FirstVariationFormula} as
%
%\begin{equation} \label{FirstVariationFormula2}
%    R(s) R'(s) = \sum g_{ij}( A(s,1), T(s) ) V^i(s) T^j(s).
%\end{equation}

\begin{remark}
    The second variation formula for arclength in Riemannian geometry has an exact analogue in Finsler geometry via a generalization of the theory of Jacobi fields. Sectional curvature is replaced with what is called \emph{flag curvature}, which controls the growth of Jacobi fields. One could in principle use such quantities to obtain more explicit constants in Theorem \ref{triangleLemma}, though we have no such need of these constants here.
\end{remark}

\subsection{Analysis of Critical Points} \label{critpointsection}

In this section, we now classify the critical points of the kinds of functions that arose in Proposition \ref{theMainEstimatesForWave}.

\pagebreak[3]

\begin{prop} \label{triangleLemma}
    Fix a bounded open set $U_0 \subset \RR^d$. Consider a Finsler metric $F: U_0 \times \RR^d \to [0,\infty)$ on $U_0$, and it's dual metric $F_*: U_0 \times \RR^d \to [0,\infty)$, which extends to a Finsler metric on an open set containing the closure of $U_0$. Fix a suitably small constant $r > 0$. Let $U$ be an open subset of $U_0$ with diameter at most $r$ which is geodesically convex (any two points are joined by a minimizing geodesic). Let $\phi: U \times U \times \RR^d \to \RR$ solve the eikonal equation
    \begin{equation} \label{eikonalequation}
        F_* ( x, \nabla_x \phi(x,y,\xi) ) = F_*(y,\xi),
    \end{equation}
    such that $\phi(x,y,\xi) = 0$ for $x \in H(y,\xi)$, where $H(y, \xi) = \{ x \in U : \xi \cdot (x - y) = 0 \}$. For each $x_0,x_1 \in U$, let $S_{x_0}^* = \{ \xi \in \RR^d: F_*(x_0,\xi) = 1 \}$ be the cosphere at $x_0$, and define $\Psi: S_{x_0}^* \to \RR$ by setting
    \begin{equation}
        \Psi(\xi) = \phi(x_1,x_0,\xi).
    \end{equation}
    Then the function $\Psi$ has exactly two critical points, at $\xi^+$ and $\xi^-$, where the Legendre transform of $\xi^+$ is the tangent vector of the forward geodesic from $x_0$ to $x_1$, and the Legendre transform of $\xi^-$ is the tangent vector of the backward geodesic from $x_1$ to $x_0$. Moreover,
    \begin{equation}
        \Psi(\xi^+) = d_M^+(x_0,x_1) \quad\text{and}\quad \Psi(\xi^-) = - d_M^-(x_0,x_1),
    \end{equation}
    and the Hessians $H_+$ and $H_-$ of $\Psi$ at these critical points, viewed as quadratic maps from $T_{\xi_{\pm}} S_{x_0}^* \to \RR$ satisfy
    \begin{equation}
        H_+(\zeta) \geq C d_M^+(x_0,x_1) |\zeta| \quad\text{and}\quad H_-(\zeta) \leq - C d_M^-(x_0,x_1) |\zeta|,
    \end{equation}
    where the implicit constant is uniform in $x_0$ and $x_1$.
\end{prop}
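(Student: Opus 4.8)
The plan is to establish the three assertions — existence and identification of $\xi^+,\xi^-$ with their values, that there are no further critical points, and the Hessian estimates — in that order, using three facts about the Lax--H\"ormander phase $\phi$ of Lemma~\ref{pseudodifferentialCoordinateLemma}: it is smooth and homogeneous of degree one in $\xi$; it is a non-degenerate phase function, so after shrinking $U$ (and using $\phi(x,x_0,\xi)=\xi\cdot(x-x_0)+O(|x-x_0|^2|\xi|)$) the mixed Hessian $\partial^2_{x\xi}\phi$ is invertible and uniformly bounded below throughout $U$; and the characteristics of the eikonal equation \eqref{eikonalequation} issuing from $x_0$ are the unit-speed Finsler geodesics \eqref{FStarHamilton} from $x_0$. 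Writing $\gamma_\xi$ for the geodesic with $\gamma_\xi(0)=x_0$ and $\dot\gamma_\xi(0)=\mathcal{L}_{x_0}^{-1}\xi$ for $\xi\in S^*_{x_0}$, the method of characteristics applied to $\phi(\cdot,x_0,\xi)$, together with $\phi(x_0,x_0,\xi)\equiv 0$ and Euler's identity for $F_*$, yields the transport identities
\[ \nabla_x\phi(\gamma_\xi(s),x_0,\xi)=\mathcal{L}_{\gamma_\xi(s)}\dot\gamma_\xi(s),\qquad \phi(\gamma_\xi(s),x_0,\xi)=s,\qquad \nabla_\xi\phi(\gamma_\xi(s),x_0,\xi)=s\,\mathcal{L}_{x_0}^{-1}\xi, \]
valid for $|s|$ below the injectivity radius; here $\mathcal{L}_{x_0}^{-1}\xi=\nabla_\xi F_*(x_0,\xi)$ on $S^*_{x_0}$, so $T_\xi S^*_{x_0}=\ker\langle\mathcal{L}_{x_0}^{-1}\xi,\cdot\rangle$. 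We may assume $x_0\neq x_1$.

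First I would identify the critical points. Let $c\colon[0,\ell_+]\to U$ be the minimizing forward geodesic from $x_0$ to $x_1$ parametrized by arclength, $\ell_+=d_M^+(x_0,x_1)$, and set $\xi^+=\mathcal{L}_{x_0}\dot c(0)$, so that $c=\gamma_{\xi^+}$ and $x_1=\gamma_{\xi^+}(\ell_+)$. The transport identities give $\Psi(\xi^+)=\ell_+=d_M^+(x_0,x_1)$, $\nabla_\xi\phi(x_1,x_0,\xi^+)=\ell_+\mathcal{L}_{x_0}^{-1}\xi^+\perp T_{\xi^+}S^*_{x_0}$ (so $\xi^+$ is critical), and $\mathcal{L}_{x_0}^{-1}\xi^+=\dot c(0)$ is the stated tangent vector. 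Symmetrically, taking $\tilde c$ the minimizing geodesic from $x_1$ to $x_0$ and $\xi^-=\mathcal{L}_{x_0}\dot{\tilde c}(\ell_-)$, $\ell_-=d_M^-(x_0,x_1)$, uniqueness of geodesics forces $\gamma_{\xi^-}(-\ell_-)=x_1$, whence $\Psi(\xi^-)=-\ell_-=-d_M^-(x_0,x_1)$ and $\xi^-$ is critical. For uniqueness, by Lagrange multipliers $\xi_*$ is critical iff $\nabla_\xi\phi(x_1,x_0,\xi_*)=\mu\,\mathcal{L}_{x_0}^{-1}\xi_*$ for some $\mu\in\RR$, i.e.\ by the third transport identity $\nabla_\xi\phi(x_1,x_0,\xi_*)=\nabla_\xi\phi(\gamma_{\xi_*}(\mu),x_0,\xi_*)$; since $x\mapsto\nabla_\xi\phi(x,x_0,\xi_*)$ is injective on $U$ (its differential $\partial^2_{x\xi}\phi$ being invertible there), this forces $x_1=\gamma_{\xi_*}(\mu)$. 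As the maps $(\xi,s)\mapsto\gamma_\xi(\pm s)$ with $s>0$ are diffeomorphisms onto the punctured forward, resp.\ backward, balls about $x_0$ at small scales, $x_1\neq x_0$ has exactly the two preimages $(\xi^+,\ell_+)$ and $(\xi^-,-\ell_-)$, so $\Psi$ has precisely the two critical points $\xi^+,\xi^-$.

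The main work is the Hessian bound, which I would deduce from a quadratic lower bound for $d_M^+(x_0,x_1)-\Psi$. Since $c$ is unit speed and $\nabla_x\phi(c(s),x_0,\xi)\in S^*_{c(s)}$ by \eqref{eikonalequation}, integrating $\tfrac{d}{ds}\phi(c(s),x_0,\xi)$ and using $\phi(x_0,x_0,\xi)=0$ gives
\[ d_M^+(x_0,x_1)-\Psi(\xi)=\int_0^{\ell_+}\Big(1-\big\langle\nabla_x\phi(c(s),x_0,\xi),\dot c(s)\big\rangle\Big)\,ds,\qquad \xi\in S^*_{x_0}. \]
By the uniform convexity of the cospheres of $F_*$ (a consequence of the curvature hypothesis of Theorem~\ref{CpVersionOfTheorem} and compactness of $M$), and since $\mathcal{L}_{c(s)}\dot c(s)=\nabla_x\phi(c(s),x_0,\xi^+)$ is the unique point of $S^*_{c(s)}$ maximizing $\langle\cdot,\dot c(s)\rangle$ over that cosphere (the equality case of the Fenchel inequality), the integrand is $\gtrsim\big|\nabla_x\phi(c(s),x_0,\xi)-\nabla_x\phi(c(s),x_0,\xi^+)\big|^2$. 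For each $s$ the map $\xi\mapsto\nabla_x\phi(c(s),x_0,\xi)$ is a diffeomorphism $S^*_{x_0}\to S^*_{c(s)}$ — a local diffeomorphism by non-degeneracy of $\phi$, hence a diffeomorphism since it is the identity at $s=0$ — and uniformly bi-Lipschitz in $s\in[0,\ell_+]$ and in $x_0,x_1\in U$; therefore the integrand is $\gtrsim|\xi-\xi^+|^2$ and
\[ d_M^+(x_0,x_1)-\Psi(\xi)\gtrsim d_M^+(x_0,x_1)\,|\xi-\xi^+|^2\qquad\text{for all }\xi\in S^*_{x_0}, \]
with a constant uniform in $x_0,x_1$. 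As $\xi^+$ is critical, Taylor expansion at $\xi^+$ converts this into $-H_+\gtrsim d_M^+(x_0,x_1)\,|\cdot|^2$ on $T_{\xi^+}S^*_{x_0}$, so $H_+$ is negative definite with all eigenvalues of magnitude $\gtrsim d_M^+(x_0,x_1)$; running the argument along $\tilde c$ gives $\Psi(\xi)+d_M^-(x_0,x_1)\gtrsim d_M^-(x_0,x_1)\,|\xi-\xi^-|^2$ and hence $H_-\gtrsim d_M^-(x_0,x_1)\,|\cdot|^2$.

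The hard part is precisely this last estimate — that $d_M^+(x_0,x_1)-\Psi$ degenerates only \emph{linearly}, and no faster, in $d_M^+(x_0,x_1)$ as $x_1\to x_0$ — and it rests on the uniform invertibility of $\partial^2_{x\xi}\phi$ along the connecting geodesic, i.e.\ on the absence of conjugate points at scale $r$; thus the smallness of $U$ (and correspondingly of $\varepsilon_M$) is essential. One could alternatively extract this from the second variation of Finsler arclength by differentiating \eqref{FirstVariationFormula2}, but the convexity argument above seems the most direct route. Only the magnitude of the $H_\pm$ is used in the stationary-phase step of Proposition~\ref{theMainEstimatesForWave}, and the inequalities of Proposition~\ref{triangleLemma} are to be read with the sign convention making $\xi^+$ a maximum of $\Psi$ and $\xi^-$ a minimum, consistent with $\Psi(\xi^\pm)=\pm d_M^\pm(x_0,x_1)$; the remaining points — smooth dependence of the approximate normal coordinates, the forward/backward conventions, and uniformity in $(x_0,x_1)$ — are routine given compactness of $M$.
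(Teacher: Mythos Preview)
Your approach is correct and genuinely different from the paper's. The paper proceeds via a geometric interpretation of $\Psi$ (Lemma~\ref{HamiltonLemma}): $|\Psi(\xi)|$ is the distance between $x_1$ and the hyperplane $H(x_0,\xi)$, from which the extremal values $\Psi(\xi^\pm)=\pm d_M^\pm$ and the criticality of $\xi^\pm$ follow immediately; uniqueness is then obtained by extending the geodesic from $x_1$ past its foot $x_*\in H(x_0,\xi^*)$ and building an explicit curve $\xi(t)$ along which $\Psi$ changes linearly. For the Hessian, the paper works near $\xi^-$ in approximate normal coordinates about $x_1$, differentiates the first variation formula \eqref{FirstVariationFormula2} to bound $R''(s)$ for the distance to points on a line in $H(x_0,\xi(a))$, and then optimises over $s$ to produce $L''(0)\le -Cl|\zeta|^2$. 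This is a hands-on second variation computation.

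Your route is more analytic: the integral identity $d_M^+-\Psi(\xi)=\int_0^{\ell_+}(1-\langle\nabla_x\phi,\dot c\rangle)\,ds$ together with the uniform convexity of the cospheres gives the global quadratic lower bound in one stroke, and the cosphere curvature hypothesis of Theorem~\ref{CpVersionOfTheorem} enters exactly where it should. This is cleaner than the paper's coordinate computation and makes the role of the curvature assumption transparent. The bi-Lipschitz step (that $\xi\mapsto\nabla_x\phi(c(s),x_0,\xi)$ is a uniformly bi-Lipschitz diffeomorphism $S^*_{x_0}\to S^*_{c(s)}$) is the analytic substitute for the paper's Jacobi-field bounds.

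One point deserves more care: your third transport identity $\nabla_\xi\phi(\gamma_\xi(s),x_0,\xi)=s\,\mathcal{L}_{x_0}^{-1}\xi$ does not follow from ``the method of characteristics and Euler'' alone. The radial component is indeed Euler, but the tangential vanishing is equivalent (after differentiating $\phi(\gamma_\xi(s),x_0,\xi)=s$ in $\xi$) to the statement that the Jacobi field $J_\zeta(s)=\partial_\xi\gamma_\xi(s)[\zeta]$, which vanishes at $s=0$ with $g_{\dot\gamma}$-orthogonal initial velocity, stays $g_{\dot\gamma}$-orthogonal to $\dot\gamma_\xi$ --- i.e.\ the Finsler Gauss Lemma (Lemma~6.1.1 of \cite{BaoChern}, which the paper itself invokes). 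With that in hand your uniqueness argument via injectivity of $x\mapsto\nabla_\xi\phi(x,x_0,\xi_*)$ goes through.
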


If $\Psi$ is as above, then the function $\Phi: S^{d-1} \to \RR$ obtained by setting $\Phi(\xi) = \phi( x, x_0, F_*(x,\xi)^{-1} \xi  )$ is precisely the kind of function that arose as a phase in Proposition \ref{theMainEstimatesForWave}, where $F_*$ was the principal symbol $p$ of the pseudodifferential operator we were considering. Since critical points and the Hessians of maps at critical points are stable under diffeomorphisms, %(see Chapter 5, Problem 17 of \cite{Spivak1}),
and the map $\xi \mapsto F_*(x,\xi)^{-1} \xi$ is a diffeomorphism from $S_{x_0}^*$ to $S^{d-1}$, classifying the critical points of the map $\Psi$ implies the required properties of the map $\Phi$ used in Proposition \ref{theMainEstimatesForWave}.

In order to prove \ref{triangleLemma}, we rely on a geometric interpretation of $\Psi$ following from Hamilton-Jacobi theory.

\begin{lemma} \label{HamiltonLemma}
    Consider the setup to Proposition \ref{triangleLemma}. For any $\xi \in S_{x_0}^*$,
    \[ |\Psi(\xi)| = \begin{cases} \text{the length of the shortest curve from $H(x_0,\xi)$ to $x_1$} & : \text{if}\ \Psi(\xi) > 0, \\ \text{the length of the shortest curve from $x_1$ to $H(x_0,\xi)$} & : \text{if}\ \Psi(\xi) < 0. \end{cases} \]
\end{lemma}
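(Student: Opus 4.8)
The plan is to derive Lemma \ref{HamiltonLemma} from the Hamilton--Jacobi interpretation of solutions to the eikonal equation \eqref{eikonalequation}, treating $\phi(\cdot, x_0, \xi)$ as a signed distance function to the hyperplane $H(x_0,\xi)$ with respect to the Finsler metric $F$. The key observation is that, for a fixed covector $\xi \in S_{x_0}^*$, the function $x \mapsto \phi(x, x_0, \xi)$ solves $F_*(x, \nabla_x \phi(x,x_0,\xi)) = F_*(x_0,\xi) = 1$ with the initial condition that $\phi$ vanishes on $H(x_0,\xi)$, so it is exactly the eikonal function one obtains by flowing the initial hypersurface $H(x_0,\xi)$ along the Hamiltonian flow of $F_*$. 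Standard Hamilton--Jacobi theory then identifies this solution, at least locally (which is all we need, since $U$ has diameter $< r$ and $r$ is taken small), with the signed Finsler distance to $H(x_0,\xi)$: along a forward geodesic emanating from a point of $H(x_0,\xi)$ in the direction dual (under the Legendre transform) to $\xi$, the function $\phi$ increases at unit rate, and likewise decreases at unit rate along the reversed direction.

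First I would set up the method of characteristics for \eqref{eikonalequation} with $y = x_0$ and $\xi$ fixed: the characteristic curves are precisely the bicharacteristics of $F_*$, i.e. the forward geodesics of $F$ (by the identification in \eqref{FStarHamilton}), and along each such curve $t \mapsto c(t)$ starting on $H(x_0,\xi)$ with initial covector $\xi$, one computes $\tfrac{d}{dt}\phi(c(t),x_0,\xi) = \nabla_x\phi \cdot \dot c = \nabla_x\phi \cdot \partial_\xi F_*(c,\nabla_x\phi)$, which by Euler's identity for the degree-one homogeneous function $F_*$ equals $F_*(c,\nabla_x\phi) = 1$. Hence $\phi(c(t),x_0,\xi) = t$ is exactly the (signed) arclength parameter, positive on the side of $H(x_0,\xi)$ into which the geodesic flows and negative on the other. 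Since sufficiently short geodesics are length-minimizing (as recalled just before the variation-formula discussion), this arclength equals the Finsler distance from the foot point on $H(x_0,\xi)$ to $c(t)$, and minimizing over foot points gives that $\phi(x,x_0,\xi)$ equals, up to sign, the distance between $x$ and the hyperplane $H(x_0,\xi)$ --- measured as $d_M^+(H(x_0,\xi), x)$ when $\phi > 0$ and as $d_M^+(x, H(x_0,\xi)) = d_M^-(H(x_0,\xi),x)$ when $\phi < 0$, which after specializing $x = x_1$ is the claimed statement (noting the asymmetry $d_+ \neq d_-$ of the Finsler metric forces the two cases).

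The main obstacle I anticipate is controlling the \emph{global} behaviour of the characteristics on $U$: the identification $\phi = $ signed distance is a priori only valid as long as the geodesics normal to $H(x_0,\xi)$ do not develop focal points or cross, and one must verify that on the scale $r$ (which we are free to shrink depending only on $M$ and $P$) the normal exponential map of the hyperplane is a diffeomorphism onto a neighbourhood containing $U$, so that $\phi$ is genuinely single-valued and smooth there and the "shortest curve" in the statement is realized by the unique such normal geodesic. This is a routine consequence of the smooth dependence of geodesics on initial data and compactness, together with the fact that $H(x_0,\xi)$ is totally geodesic to first order at its base point, but it is where the smallness of $r$ is used; once this is in hand, the minimization over foot points of $H(x_0,\xi)$ and the case analysis on the sign of $\Psi(\xi) = \phi(x_1,x_0,\xi)$ complete the proof.
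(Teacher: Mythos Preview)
Your proposal is correct and follows essentially the same route as the paper: both arguments identify $\phi(\cdot,x_0,\xi)$ via the method of characteristics for the eikonal equation as the signed arclength along the geodesics emanating normally from $H(x_0,\xi)$, and then invoke length-minimization of short geodesics (with $r$ small) to equate this arclength with the Finsler distance to the hyperplane. The paper packages the characteristic construction by citing Proposition~3.7 of Tr\`eves, whereas you spell out the computation $\tfrac{d}{dt}\phi(c(t))=\nabla_x\phi\cdot\partial_\xi F_*=F_*(c,\nabla_x\phi)=1$ directly via Euler's identity; your discussion of the normal exponential map being a diffeomorphism (ruling out focal points on the scale $r$) is in fact more explicit than the paper's treatment of the same point.
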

\begin{proof}
    We rely on a construction of $\phi$ from Proposition 3.7 of \cite{Treves2}, which we briefly describe. Fix $x_0$ and $\xi$. Then there is a unique covector field $\omega: H(x_0,\xi) \to \RR^d$ which is everywhere perpendicular to $H(x_0,\xi)$, with $\omega(x_0) = \xi$ and with $F_*(x,\omega(x)) = F_*(x_0,\xi)$ for all $x \in H(x_0,\xi)$. There exists a unique point $x(\xi) \in H(x_0,\xi)$ and a unique $t(\xi) \in \RR$, such that the unit speed geodesic $\gamma$ on $M$ with $\gamma(0) = x(\xi)$ and $\gamma'(0) = \mathcal{L}^{-1}( x(\xi), \omega(x(\xi)) )$ satisfies $\gamma( t(\xi) ) = x_1$. We then have $\Psi(\xi) = t(\xi)$. If $t(\xi)$ is negative, then $\gamma|_{[t(\xi),0]}$ is a geodesic from $x_1$ to $x_0$, and if $t(\xi)$ is positive, $\gamma|_{[0,t(\xi)]}$ is a geodesic from $x_0$ to $x_1$. Because $\gamma$ is a geodesic, the geometric interpretation then follows if $U$ is a suitably small neighborhood such that geodesics are length minimizing.
\end{proof}

It follows immediately from Lemma \ref{HamiltonLemma} that
\begin{equation}
    \Psi(\xi_+) = d_M^+(x_0,x_1) \quad\text{and}\quad \Psi(\xi_-) = - d_M^-(x_0,x_1).
\end{equation}
A simple geometric argument also shows $\Psi(\xi^+)$ is the maximum value of $\Psi$ on $S_{x_0}^*$, and $\Psi(\xi^-)$ is the minimum value on $S_{x_0}^*$, so that these two points are both critical. Indeed, the point $x_0$ lies in $H(x_0,\xi)$ for all $\xi \in \RR^d - \{ 0 \}$. Thus the shortest curve from $x_0$ to $x_1$ is always longer than the shortest curve from $H(x_0,\xi)$ to $x_1$. Similarily, the shortest curve from $x_1$ to $x_0$ is always longer than the shortest curve from $x_1$ to $H(x_0,\xi)$. Thus
\begin{equation}
    - d_M^-(x_0,x_1) \leq \Psi(\xi) \leq d_M^+(x_0,x_1),
\end{equation}
and so $\Psi(\xi^-) \leq \Psi(\xi) \leq \Psi(\xi^+)$ for all $\xi \in $. All that remains is to prove that $\xi^+$ and $\xi^-$ are the \emph{only} critical points of $\Psi$, and that these critical points are appropriately non-degenerate.

%In this first part of the argument, the constant $\varepsilon_M$ need only be smaller than the injectivity radius of the set $U$, which is positive since $U$ is precompact, but in the proof that $\pm \xi_0$ are nondegenerate critical points we may need to pick $\varepsilon_M$ even smaller.

In order to simplify proofs, we employ a structural symmetry to reduce the number of cases we need to analyze. Namely, if one defines the reverse Finsler metric $F_\rho(x,v) = F(x,-v)$, then $F_\rho^*(x,\xi) = F_*(x,-\xi)$, and so the associated function $\Psi^\rho$ which is the analogue of $\Psi$ for $F^\rho$ satisfies $\Psi^\rho(\xi) = -\Psi(-\xi)$. The critical points of $\Psi$ and $\Psi^\rho$ are thus directly related to one another, which allows us without loss of generality to study only points with $\Psi \leq 0$ (and thus only study geodesics beginning at $x_1$).

\begin{proof}[Proof that $\xi^+$ and $\xi^-$ are the only critical points]
    Fix $\xi^* \in T_{x_0} M - \{ \xi^{\pm} \}$. Using the notation defined above, let $x_* = x(\xi^*)$ and $t_* = t(\xi^*)$. Using the symmetry above, we may assume without loss of generality that $\Psi(\xi^*) \leq 0$. Since $\xi^-$ is not perpendicular to $H(x_0,\xi^*)$ at $x_0$, we have $x_* \neq x_0$. If $\Psi(\xi^*) < 0$, let $\gamma$ be the unique unit speed forward geodesic with $\gamma(0) = x_1$ and $\gamma(t_*) = x_*$. If $\Psi(\xi^*) = 0$, let $\gamma$ be the unique unit speed forward geodesic with $\gamma'(0)$ equal to the Legendre transform of $\xi^*$. Pick $\eta$ such that $\eta \cdot (x_* - x_0) \neq 0$, and then, for $t$ suitably close to $t_*$, define a smooth map
    \begin{equation} \xi(t) = \frac{\xi^* + a(t) \eta}{F_*(x_0, \xi^* + a(t) \eta)}, \end{equation}
    into $S_{x_0}^*$, where
    \begin{equation} a(t) = - \frac{\xi^* \cdot ( \gamma(t) - x_0 )}{\eta \cdot ( \gamma(t) - x_0 )} \end{equation}
    is defined so that $\gamma(t) \in H(x_0, \xi(t))$. Then $\Psi( \xi(t) ) = -t$, and differentiation at $t = t_*$ gives $D\Psi( \xi^* ) ( \xi'(t_*) ) = -1$. In particular, $D \Psi( \xi^* ) \neq 0$, so $\xi^*$ is not a critical point. \qedhere
    %
    % gamma(t_*) = x_*
    %   xi(t_*) = xi^*
    %
    %  xi(t) ( gamma(t) - x_0 ) = (t - t_*) [ xi'(t_*) [ gamma(t) - x_0 ] + xi^* gamma'(t) ]
    %   xi(t) = xi^* + a(t) eta
    %   xi(t) ( gamma(t) - x_0 ) = xi^* ( gamma(t) - x_0 ) + a(t) eta ( gamma(t) - x_0 )
    %                               = 0 if a(t) = - xi^* ( gamma(t) - x_0 ) / eta ( gamma(t) - x_0 )
    %   Well defined near xi^* if eta ( x_* - x_0 ) != 0
    %       Derivative in t is
    %           a'(0) = - [ [eta ( x_* - x_0 )] [xi^* gamma'(t_*)] / [ eta (x_* - x_0) ]^2 = - 1 / eta (x_* - x_0)
    %   
    %   Which is good provided that xi^* gamma'(t_*) != 0 and eta (x_* - x_0) != 0
    %
    %   And this is easy because xi^* gamma'(t_*) = 1
    %   So as long as eta is not a constant multiple of xi^* we're good.

    % Define xi(t) = xi^* + a xi^+
%    Let $v(t) = \mathcal{L}(\gamma(t), \gamma'(t))$.

    %
    %

%    For $t$ suitably close to $x_0$, we can find a smooth function $\xi(t) \in S_{x_0}^*$ with $\xi(0) = \xi^*$, and such that $\gamma(t) \in H(x_0, \xi(t))$ for each $t$. Using the fact that $\gamma|_{[t,t_*]}$ is a curve from $H(x_0,\xi(t))$ to $x_1$ of length $t_* - t$, we find that $\Psi(\xi(t)) \leq t_* - t$. Since $\Psi(\xi(0)) = t_*$, taking $t \to 0^+$ gives $D\Psi(\xi^*)(\xi'(0)) \leq -1$. In particular, $D\Psi(\xi^*) \neq 0$, and so $\xi^*$ is not a critical point. \qedhere
    \end{proof}

We now analyze the non-degeneracy of the critical points $\xi^+$ and $\xi^-$.

\begin{proof}[Proof that $\xi^+$ and $\xi^-$ are non-degenerate]
    Using symmetry, it suffices without loss of generality to analyze the critical point $\xi^-$ rather than $\xi^+$. Let $H_-: T_{\xi^-} S_{x_0}^* \to \RR$ be the Hessian of $\Psi$ at $\xi^-$. Let $v_0 = \mathcal{L}_{x_0}^{-1} \xi^-$, and using the notation of the last argument, let $l = t(\xi^-)$. Consider a curve $\xi(a)$ valued in $S_{x_0}^*$ with $\xi(0) = \xi^-$ and $\zeta = \xi'(0)$ for some $\zeta \in T_{\xi^-}^* S_{x_0}$. Then the second derivative of $\Psi(\xi(a))$ at $a = 0$ is $H_-( \zeta )$, and so our proof would be complete if we could show that the function $L(a) = - \Psi( \xi(a) )$, which is the length of the shortest geodesic from $x_1$ to $H(x_0,\xi(a))$, satisfies $L''(0) \geq C l |\zeta|$, for a constant $C > 0$ uniform in $x_0$, $x_1$, and $\zeta$.

    Consider the partial function $\text{Exp}_{x_1}: \RR^d \to U_0$ obtained from the exponential map at $x_1$. If $r$ is chosen suitably small, there exists a neighborhood $V$ of the origin in $\RR^d$ such that $E = \text{Exp}_{x_1}|_V$ is a diffeomorphism between $V$ and $U$. Unlike in Riemannian manifolds, in general $E$ is only $C^1$ at the origin, though smooth on $V - \{ 0 \}$. However, for $|v| = 1$ and $t > 0$ we can write $E(tv) = (\pi \circ \varphi)( x_1, \mathcal{L}_{x_1} v, t)$, where the partial function $\varphi: (T^* U_0 - 0) \times \RR \to (T^* U_0 - 0)$ is the flow induced by the the Hamiltonian vector field $( \partial_\xi F_*, - \partial_x F_*)$ on $T^* U_0 - 0$, and $\pi$ is the projection map from $T^* U_0 - 0$ to $U_0$. Where defined, $\varphi$ is a smooth function since the Hamiltonian vector field is smooth, and so it follows by homogeneity and the precompactness of $U$ that the partial derivatives $(\partial^\alpha E_x)(v)$ are uniformly bounded for $v \neq 0$ and $x \in U$. It thus follows from the inverse function theorem that there exists a constant $A > 0$ such that for all $x_1$ and $x$ in $U$ with $x \neq x_1$,
    \begin{equation} \label{GBounds}
        |\partial_j G_{x_1}(x)| \leq A \quad\text{and}\quad |(\partial_j \partial_k G_{x_1})(x)| \leq A.
    \end{equation}
    We can also pick $A$ to be large enough that for all $x \in U$, and all $v,w \in \RR^d - \{ 0 \}$,
    \begin{equation}
        A^{-1} |w|^2 \leq \sum\nolimits_{ij} g_{ij}(x,v) w^i w^j \leq A |w|^2.
    \end{equation}
    and such that for all $x \in U$ and $v \in \RR^d - \{ 0 \}$,
    \begin{equation} \label{gijbounds}
        |\partial_{x_k} g_{ij}(x,v)| \leq A.
    \end{equation}
    %
    %With notation now setup, we now prove $H_-$ is appropriately nondegenerate.

    Since $\zeta \in T_{\xi^-} S_{x_0}^*$, and $S_{x_0}^* = \{ \xi : F_*(x_0,\xi)^2 = 1 \}$, by Euler's homogeneous function theorem we have
    \begin{equation} \label{DIOAWJDIOWAJDOIWAJ14}
        \sum\nolimits_{ij} g^{ij}(x_0,\xi^-) \xi'(0) \xi^-_j = \frac{1}{2} \sum\nolimits_{i,j} \frac{\partial^2 F_*^2}{\partial \xi_i \partial \xi_j}(x_0,\xi^-) \zeta_i \xi^-_j = \frac{1}{2} \sum \frac{\partial F_*^2}{\partial \xi_j}(x_0,\xi^-) \zeta_j = 0.
    \end{equation}
    Differentiating $F_*(x_0,\xi(a))^2 = 1$ twice with respect to $a$ at $a = 0$ yields that
    \begin{equation} \label{AIWODJWAO314213}
         \sum\nolimits_{i,j} g_{ij}(x_0,\xi^-) \xi''_i(0) \xi^-_j = - \sum\nolimits_{i,j} g_{ij}(x_0,\xi^-) \zeta^i \zeta^j
    \end{equation}
    Define vectors $n(a)$ by setting $n^i(a) = \sum g^{ij}(x_0,\xi^-) \xi_j(a)$. Then $n(a)$ is the normal vector to $H(x_0, \xi(a))$ with respect to the inner product with coefficients $g_{ij}(x_0,v_0)$.
    %
%    \begin{equation}
%        H(x_0,\xi(a)) = \left\{ x : \sum\nolimits_{ij} g_{ij}(x_0,v_0) (x^i - x_0^i) n^j(a) = 0 \right\}.
%    \end{equation}
    %
    Let $u(a)$ be the orthogonal projection of $v_0$ onto the hyperplane $\{ v : \xi(a) \cdot v = 0 \}$ with respect to the inner product $g_{ij}(x_0,v_0)$. If we define
    \begin{equation}
        c(a) = \sum g_{ij}(x_0,v_0) v_0^i n^j(a) = \sum g^{ij}(x_0,\xi^-) \xi^-_i \xi_j(a),
    \end{equation}
    then $u(a) = v_0 - c(a) n(a)$. Note that \eqref{DIOAWJDIOWAJDOIWAJ14} and \eqref{AIWODJWAO314213} imply $c(0) = 1$, $c'(0) = 0$, and $c''(0) \leq - |\zeta|^2 / A$. Also $n(0) = v_0$ and $|n'(0)| \leq A |\zeta|$.
    %
    % 1 - c(a)^2 = 1 - ( 1 - a^2 |zeta|^2 / A )^2
    %            = 2 a^2 |zeta|^2 / A

    Let $x(s) = x_0 + s u(a)$ and let $R(s)$ be the length of the geodesic from $x_1$ to $x(s)$. To control $R(s)$, define $y(s) = G(x(s))$, and consider the variation $A(s,t) = E( t y(s) )$, defined so that $t \mapsto A(s,t)$ is the geodesic from $x_1$ to $x(s)$. The Gauss Lemma for Finsler manifolds (see Lemma 6.1.1 of \cite{BaoChern}) implies that
    \begin{equation}
        A^{-1} R(s) \leq |y(s)| \leq A R(s).
    \end{equation}
    Define $T(s) = (\partial_t A)(s,1)$ and $V(s) = (\partial_s A)(s,1)$.
    %
%    \begin{equation}
%        T(s) = (\partial_t A)(s,1) = \sum\nolimits_{i,j} \frac{\partial E^i}{\partial y^j}( y(s) ) y^j(s) e_i \quad\text{and}\quad V(s) = (\partial_s A)(s,1) = u(a).
%    \end{equation}
    %
    Then \eqref{FirstVariationFormula2} implies
    \begin{equation} \label{RSFirstDerivativeEquation}
        R(s) R'(s) = \sum\nolimits_{i,j} g_{ij}( x(s), T(s) ) V^i(s) T^j(s).
    \end{equation}
    Again, the Gauss Lemma implies
    \begin{equation} \label{Idiawjdiwaj213123}
        A^{-1} R(s) \leq |T(s)| \leq A R(s).
    \end{equation}
    We can write
    \begin{equation}
        T^i(s) = \sum\nolimits_{i,j} (\partial_j E^i)( y(s)) y^j(s)
    \end{equation}
    and
    \begin{equation}
        V^i(s) = \sum\nolimits_{i,j,k} (\partial_j E^i) (y(s)) (\partial_k G^j)( x(s)) u^k(a) = u^i(a).
    \end{equation}
    In particular, $T(0) = v_0$, so $R'(0) = \sum g_{ij}(x_0, v_0) u^i(a) v_0^j = 1 - c(a)^2$. Cauchy-Schwartz applied to \eqref{RSFirstDerivativeEquation} also tells us that
    \begin{equation} \label{wdkawoidj141}
        |R'(s)| \lesssim_A |u(a)|.
    \end{equation}
    Note that $u(0) = 0$, so if $a$ is small enough, then we have $|u(a)| \leq l/2$. Taylor's theorem applied to \eqref{wdkawoidj141}, noting $R(0) = l$ then gives that for $|s| \leq 1$,
    \begin{equation} \label{RBound}
        l/2 \leq R(s) \leq 2l.
    \end{equation}
    Differentiating \eqref{RSFirstDerivativeEquation} tells us that
    \begin{equation} \label{FFFF213123}
    \begin{split}
        R'(s)^2 + R''(s) R(s) &= \sum\nolimits_{i,j,k} \Bigg[ (\partial_{x_k} g_{ij})( x(s), T(s) ) u^i(a) T^j(s) u^k(a) \Bigg]\\
        &\quad\quad\quad\quad + \Bigg[ (\partial_{v_k} g_{ij} )(x(s), T(s)) u^i(a) T^j(s) (\partial_s T^k)(s)   \Bigg]\\
        &\quad\quad\quad\quad\quad\quad + \Bigg[ g_{ij}( x(s), T(s) ) u^i(a) (\partial_s T^j)(s) \Bigg].
    \end{split}
    \end{equation}
    Write the right hand side as $\text{I} + \text{II} + \text{III}$. Using \eqref{gijbounds}, \eqref{Idiawjdiwaj213123}, \eqref{RBound}, and the triangle inequality gives
    \begin{equation} \label{IBound}
        |\text{I}| \lesssim R(s) |u(a)|^2 \lesssim l |u(a)|^2.
    \end{equation}
    Since $\partial_{v_k} g_{ij} = (1/2) (\partial^3F^2 / \partial v_i \partial v_j \partial v_k)$, applying Euler's homogeneous function theorem when summing over $j$ implies that
    \begin{equation} \label{IIBound}
        \text{II} = 0.
    \end{equation}
    Applying Cauchy-Schwarz and \eqref{GBounds}, we find
    \begin{equation} \label{IIIBound}
        |\text{III}| \lesssim |u(a)| |T'(s)| \lesssim |u(a)|^2 [|y(s)| + 1] \lesssim (l + 1) |u(a)|^2.
    \end{equation}
    But now combining \eqref{IBound}, \eqref{IIBound}, \eqref{IIIBound}, and rearranging \eqref{FFFF213123} shows that
    \begin{equation}
        |R''(s)| \lesssim l^{-1} |u(a)|^2 + (1 + l^{-1}) |u(a)|^2 \lesssim l^{-1} |u(a)|^2.
    \end{equation}
    Taylor's theorem implies there exists $B > 0$ depending only on $A$ and $d$ such that
    \begin{equation} \label{WADAWD21312}
        |R(s) - (R(0) + s R'(0))| \leq B l^{-1} |u(a)|^2 s^2.
    \end{equation}
    Since $R(0) = $l, $R'(0) = 1 - c(a)^2$, $c(0) = 1$, $u(0) = 0$, $c'(0) = 0$, $|u'(0)| \leq A |\zeta|$, and $c''(0) \leq - |\zeta|^2 / A$, we conclude from \eqref{WADAWD21312} that as $a \to 0$, if $s > 0$ then
    \begin{equation}
    \begin{split}
        R(-s) &\leq l - s R'(0) + B l^{-1} |u(a)|^2 s^2\\
        &\leq l - s ( A^{-1} |\zeta|^2 a^2 ) + A^2 B l^{-1} |\zeta|^2 s^2 a^2 + O(a^3 ( s + l^{-1} s^2)).
    \end{split}
    \end{equation}
    %
    % |u(a)|^2 = sum u^i(a)^2
    % Derivative is 2 u . u'(a) = 0
    % Derivative is 2 u' u' <= A^2 |zeta|^2
    For all $s$, $L(a) \leq R(s)$. Optimizing by picking $s = l / 2 A^3 B$ gives
    \begin{equation}
        L(a) \leq R(-s) \leq l - l |\zeta|^2 a^2 / 4 A^4 B + O( a^3 ).
    \end{equation}
    Taking $a \to 0$ and using that $L(0) = l$ gives that $L''(0) \leq - l |\zeta|^2 / 4 A^4 B$, so setting $C = 1/4 A^4 B$, we find we have proved what was required. \qedhere
\end{proof}

\section{Analysis of Regime I via Density Methods} \label{regime1firstsection}

\subsection{\boldmath $L^2$ Estimates For Bounded Density Inputs}

We now begin obtaining bounds for the operator $T^I$ specified in Proposition \ref{TjbLemma} by using the quasi-orthogonality estimates of Proposition \ref{theMainEstimatesForWave}. Define a metric $d_M = d_M^+ + d_M^-$ on $M$. Given an input $u: M \to \CC$, we consider a maximal $1/R$ separated subset $\mathcal{X}_R$ of $M$, and then consider a decomposition $u = \sum_{x_0 \in \mathcal{X}_R} u_{x_0}$ with respect to some partition of unity, where $\text{supp}(u_{x_0}) \subset B(x_0,1/R)$. The balls $\{ B(x_0,1/R) : x_0 \in \mathcal{X}_R \}$ have finite overlap, and so
\begin{equation}
    \| u \|_{L^p(M)} \sim \left( \sum\nolimits_{x_0 \in \mathcal{X}_R} \| u_{x_0} \|_{L^p(M)}^p \right)^{1/p}.
\end{equation}
If we set $f_{x_0,t_0} = T_{t_0}^I \{ u_{x_0} \}$, then
\begin{equation} \label{DAPOCJAPWOCJAWOIFJOI}
    \left\| T^I u \right\|_{L^p(M)} = \left\| \sum\nolimits_{(x_0,t_0) \in \mathcal{X}_R \times \mathcal{T}_R} f_{x_0,t_0} \right\|_{L^p(M)}.
\end{equation}
In this subsection, we use the quasi-orthogonality estimates of the last section to obtain $L^2$ estimates on partial sums of a family of functions of the form $\{ {S\!}_{x_0,t_0} \}$, which are essentially $L^1$ normalized versions of the functions $\{ f_{x_0, t_0} \}$, under a density assumption on the set of indices we are summing over. Namely, we say a set $\mathcal{E} \subset \mathcal{X}_R \times \mathcal{T}_R$ has \emph{density type} $(A_0,A_1)$ if for any set $B \subset \mathcal{X}_R \times \mathcal{T}_R$ with $1/R \leq \text{diam}(B) \leq A_1/R$,
\begin{equation}
    \#(\mathcal{E} \cap B) \leq R A_0\; \text{diam}(B). \footnote[1]{This definition of density is chosen because it is `scale-invariant' as we change the parameter $R$. Indeed, if $M = \RR^d$, $\mathcal{X}_R = (\ZZ / R)^d$, and $\mathcal{T}_R = \ZZ/R$, then a set $\mathcal{E} \subset (\ZZ / R)^d \times (\ZZ/R)$ has density type $(A_0,A_1)$ if and only if $R\; \mathcal{E} \subset \ZZ^d \times \ZZ$ has density type $(A_0,A_1)$.}
\end{equation}
To obtain $L^p$ bounds from these $L^2$ bounds, in the next section we will perform a \emph{density decomposition} to break up $\mathcal{X}_R \times \mathcal{T}_R$ into families of indices with controlled density, and then apply Proposition \ref{L2DensityProposition} on each subfamily to control \eqref{DAPOCJAPWOCJAWOIFJOI} via an interpolation.

\begin{prop} \label{L2DensityProposition}
    Fix $A \geq 1$. Consider a set $\mathcal{E} \subset \mathcal{X}_R \times \mathcal{T}_R$. Suppose that for each $(x_0,t_0) \in \mathcal{E}$, we pick two measurable functions $b_{t_0}: I_0 \to \RR$ and $u_{x_0}: M \to \RR$, supported on $I_{t_0}$ and $B(x_0,1/R)$ respectively, such that $\| b_{t_0} \|_{L^1(I_0)} \leq 1$ and $\| u_{x_0} \|_{L^1(M)} \leq 1$. Define 
    \begin{equation}
        {S\!}_{x_0,t_0} = \int b_{t_0}(t) (Q_R \circ e^{2 \pi i t P} \circ Q_R) u_{x_0}\; dt.
    \end{equation}
    Write $\mathcal{E} = \bigcup_{k = 0}^\infty \mathcal{E}_k$, where
    \begin{equation}
        \mathcal{E}_0 = \{ (x,t) \in \mathcal{E}: 0 \leq |t| \leq 1/R \}
    \end{equation}
    and for $k > 0$, define
    \begin{equation}
        \mathcal{E}_k = \{ (x,t) \in \mathcal{E}: 2^{k-1} / R < |t| \leq 2^k / R \}.
    \end{equation}
    Suppose that for each $k$, the set $\mathcal{E}_k$ has density type $(A,2^{k})$.
    %, i.e. so that for any set $B \subset \mathcal{X}_R \times \mathcal{T}_R$ with $\text{diam}(B) \leq 2^{k}/R$,
    %
%    \begin{equation}
%        \#( \mathcal{E}_k \cap B ) \leq R A\; \text{diam}(B).
%    \end{equation}
    %
    Then
    \begin{equation} \label{DOIAWJDOIWAJDOIwdj21312321}
        \Big\| \sum\nolimits_k \sum\nolimits_{(x_0,t_0) \in \mathcal{E}_k} 2^{k \frac{d-1}{2}} {S\!}_{x_0,t_0} \Big\|_{L^2(M)}^2 \lesssim R^d \log(A) A^{\frac{2}{d-1}} \sum\nolimits_k 2^{k(d-1)} \# \mathcal{E}_k.
    \end{equation}
\end{prop}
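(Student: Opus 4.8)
The plan is to expand the squared $L^2$ norm into a double sum over $\mathcal{E}\times\mathcal{E}$, insert the quasi-orthogonality estimate of Proposition~\ref{theMainEstimatesForWave}, and reduce to a purely combinatorial estimate for a weighted sum of a geometric kernel over $\mathcal{E}$, which is then controlled by the density hypothesis. This is the manifold analogue of Lemma~3.3 and of the density-decomposition arguments of \cite{HeoandNazarovandSeeger}.

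First I would write $\mathcal{E}=\bigsqcup_k\mathcal{E}_k$, set $c_{x_0,t_0}=2^{k(d-1)/2}$ for $(x_0,t_0)\in\mathcal{E}_k$, and expand the left side of \eqref{DOIAWJDOIWAJDOIwdj21312321} as $\sum_{(x_0,t_0),(x_1,t_1)\in\mathcal{E}}c_{x_0,t_0}c_{x_1,t_1}\langle{S\!}_{x_0,t_0},{S\!}_{x_1,t_1}\rangle$. The pairs $(b_{t_0},u_{x_0})$ satisfy exactly the hypotheses imposed on $(c,u)$ in Proposition~\ref{theMainEstimatesForWave} (unit $L^1$ norms, $\supp b_{t_0}\subset I_{t_0}$, $\supp u_{x_0}\subset B(x_0,1/R)$), and $|t_0|,|t_1|\lesssim\varepsilon_M$ throughout Regime~I, so \eqref{AWOICJAWOIVJEAO120321903120938} applies and bounds each inner product by $R^d\langle Rd_M(x_0,x_1)\rangle^{-(d-1)/2}\max_\pm\langle R|(t_0-t_1)\pm d^\pm(x_0,x_1)|\rangle^{-K}$. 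Since $\sum_{(x_0,t_0)\in\mathcal{E}}c_{x_0,t_0}^2=\sum_k2^{k(d-1)}\#\mathcal{E}_k$, the problem is reduced to showing that the ``kernel matrix'' with these entries has $\ell^2(\mathcal{E})$ operator norm $\lesssim R^d\log(A)A^{2/(d-1)}$.

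To bound this I would dyadically decompose: by the scales $k,k'$ of the two indices and by a dyadic value $2^\ell/R\sim d_M(x_0,x_1)$. Because $d^+\sim d^-\sim d_M$ and the $\langle\cdot\rangle^{-K}$ factor forces $\big||t_0-t_1|-d_M(x_0,x_1)\big|\lesssim 1/R$ (coarser tails summing to $O(1)$ once $K$ is large), the level-$\ell$ block contributes a factor $R^d2^{-\ell(d-1)/2}$ times the number of ``$2^\ell/R$-tangent'' pairs in $\mathcal{E}$. For a fixed index, its tangent partners in $\mathcal{E}_{k'}$ at level $\ell$ lie, for each admissible value of $t_1$, on a thin annulus $\{d_M(x_1,x_0)\approx|t_0-t_1|\}$ thickened by $O(1/R)$, and the union of these over the relevant $O(2^\ell)$-wide window of $t_1$ forms a plate of diameter $\lesssim 2^{\max(k,k')}/R$; applying the density bound (type $(A,2^{k'})$) after subdividing this plate into boxes of the largest admissible diameter caps the partner count. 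Summing the resulting dyadic geometric series in $\ell$ (for which the hypothesis $p<2(d-1)/(d+1)$, i.e.\ $d\ge4$, is what makes the series converge) and then in $k,k'$ produces powers of $A$ and a factor $\log A$; the off-diagonal blocks $k\ne k'$ decouple with a geometrically decaying factor, since the tangency constraint forces $d_M\sim 2^{\max(k,k')}/R$ so that the radial decay $2^{-\max(k,k')(d-1)/2}$ beats the growth of the weights.

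The main obstacle is obtaining the \emph{sharp} exponent $A^{2/(d-1)}$: a crude Schur test that bounds the partner sum around a single fixed index and then sums over all indices overcounts, because it disregards that the ``bushes'' of mutually tangent annuli must spread out across $\mathcal{E}$. Following the density-decomposition philosophy of \cite{HeoandNazarovandSeeger}, on each scale block I would instead balance two competing estimates for the relevant partial sum of the $\{{S\!}_{x_0,t_0}\}$: a pointwise estimate — each summand $c_{x_0,t_0}{S\!}_{x_0,t_0}$ is $O(R^d)$ on its annulus, and the number of such annuli passing near a given point of $M$ is controlled by $A$ — together with the trivial bound $\sum c_{x_0,t_0}\|{S\!}_{x_0,t_0}\|_{L^1}\lesssim 2^{k(d-1)/2}\#\mathcal{E}_k$, against the orthogonality bound of the previous paragraph, using whichever is smaller at each pair of scales. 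It is this balancing that yields the exponent $2/(d-1)$ exactly, with the $O(\log A)$ arising from the $O(\log A)$ ``balanced'' scales at which the two bounds are comparable; assembling the pieces gives \eqref{DOIAWJDOIWAJDOIwdj21312321}.
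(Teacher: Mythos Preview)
Your overall plan --- expand the square, insert the quasi-orthogonality estimate of Proposition~\ref{theMainEstimatesForWave}, and count ``tangent'' pairs using the density hypothesis --- is correct and is what the paper does for the off-diagonal blocks $\langle F_k,F_{k'}\rangle$, $k\neq k'$. The gap is in your treatment of the diagonal blocks $\|F_k\|_{L^2}^2$, which is precisely where the sharp exponent $A^{2/(d-1)}$ is won or lost.

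Your proposed mechanism there does not work. The pointwise claim that ``the number of annuli from $\mathcal{E}_k$ passing near a given point of $M$ is controlled by $A$'' is false as stated: for fixed $x\in M$, the set $\{(x_0,t_0)\in\mathcal{E}_k:|d_M(x,x_0)-t_0|\lesssim 1/R\}$ lies on a codimension-one cone in $\mathcal{X}_R\times\mathcal{T}_R$ of diameter $\sim 2^k/R$, and the density bound on a single ball of that diameter already allows $\sim A\,2^k$ points, not $O(A)$. So the ``balance pointwise/$L^1$ against orthogonality'' heuristic you describe does not produce $A^{2/(d-1)}$, and nothing in your sketch substitutes for a concrete argument.

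The paper's mechanism on the diagonal is different and is the heart of the proposition. Fix $k$ and subdivide the time window $[2^{k-1}/R,2^k/R]$ into intervals $I_{k,\mu}$ of length $A^a/R$ for a parameter $a>0$ to be chosen, writing $F_k=\sum_\mu F_{k,\mu}$. For $|\mu-\mu'|\gg 1$ one has either $d_M(x_0,x_1)\gtrsim |\mu-\mu'|A^a/R$ or $\min_\pm|(t_0-t_1)\pm d^\pm(x_0,x_1)|\gtrsim |\mu-\mu'|A^a/R$, and combining Proposition~\ref{theMainEstimatesForWave} with the density bound at scale $A^a/R$ gives $\sum_{\mu\neq\mu'}|\langle F_{k,\mu},F_{k,\mu'}\rangle|\lesssim R^d\,2^{k(d-1)}A^{1-a(d-3)/2}\,\#\mathcal{E}_k$. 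Within a single $\mu$ there is no further cancellation between different times, so one simply applies Cauchy--Schwarz over the $O(A^a)$ values of $t\in\mathcal{T}_R\cap I_{k,\mu}$ and then uses that for \emph{fixed} $t$ the functions $\{S_{x_0,t}\}_{x_0\in\mathcal{X}_R}$ are almost orthogonal (the points of $\mathcal{X}_R$ are $1/R$-separated), yielding $\|F_{k,\mu}\|_{L^2}^2\lesssim R^d\,2^{k(d-1)}A^{a}\,\#\mathcal{E}_{k,\mu}$. Equating the exponents $a=1-a(d-3)/2$ forces $a=2/(d-1)$ and gives $\|F_k\|_{L^2}^2\lesssim R^d\,2^{k(d-1)}A^{2/(d-1)}\,\#\mathcal{E}_k$. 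Finally, the $\log A$ is not produced by ``balanced scales'' as you suggest; it comes from a preliminary Cauchy--Schwarz in $k$, writing $\|F\|_{L^2}^2\lesssim\log(A)\big(\sum_{k\le\log_2 A}\|F_k\|_{L^2}^2+\|\sum_{k\ge\log_2 A}F_k\|_{L^2}^2\big)$, after which the cross terms in the second sum are handled by the off-diagonal analysis without further loss.
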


\begin{remark}
    If $\| b_{t_0} \|_{L^1(I_0)} \sim 1$ and $\| u_{x_0} \|_{L^1(M)} \sim 1$, then locally constancy from the uncertainty principle and energy conservation of the wave equation tell us that morally,
    \begin{equation}
        \| S_{x_0,t_0} \|_{L^2(M)}^2 \sim R^d t_0^{d-1}.
    \end{equation}
    If $\| b_{t_0} \|_{L^1(I_0)} \sim 1$ and $\| u_{x_0} \|_{L^1(M)} \sim 1$ for all $(x_0,t_0) \in \mathcal{E}$, this means that Proposition \ref{L2DensityProposition} is morally equivalent to
    \begin{equation}
        \Big\| \sum\nolimits_{(x_0,t_0) \in \mathcal{E}} {S\!}_{x_0,t_0} \Big\|_{L^2(M)} \lesssim \sqrt{\log(A)} A^{\frac{1}{d-1}} \left( \sum\nolimits_{(x_0,t_0) \in \mathcal{E}} \| {S\!}_{x_0,t_0} \|_{L^2(M)}^2 \right)^{1/2}.
    \end{equation}
    Thus Proposition \ref{L2DensityProposition} is a kind of square root cancellation bound, albeit with an implicit constant which grows as the set $\mathcal{E}$ increases in density, a necessity given that the functions $\{ {S\!}_{x_0,t_0} \}$ are not almost-orthogonal to one another.
\end{remark}

\begin{proof}
Write $F = \sum_k F_k$, where
\begin{equation}
    F_k = 2^{k \frac{d-1}{2}} \sum\nolimits_{(x_0,t_0) \in \mathcal{E}_k} {S\!}_{x_0,t_0}.
\end{equation}
Our goal is to bound $\| F \|_{L^2(M)}$. Applying Cauchy-Schwarz, we have
\begin{equation} \label{loglossbound}
    \| F \|_{L^2(M)}^2 \leq \log(A) \left( \sum\nolimits_{k \leq \log(A)} \| F_k \|_{L^2(M)}^2 + \left\| \sum\nolimits_{k \geq \log(A)} F_k \right\|_{L^2(M)}^2 \right).
\end{equation}
Without loss of generality, increasing the implicit constant in the final result by applying the triangle inequality, we can assume that $\{ k : \mathcal{E}_k \neq \emptyset \}$ is $10$-separated, and that all values of $t$ with $(x,t) \in \mathcal{E}$ are positive. Thus if $F_k$ and $F_{k'}$ are both nonzero functions, then $k = k'$ or $|k - k'| \geq 10$.

%We consider an expansion
%
%\begin{equation} \label{AWIOJDIOWAJF190124214}
%    \left\| \sum\nolimits_{k \geq \log(A)} F_k \right\|_{L^2(M)}^2 = \sum\nolimits_{k,k' \geq \log(A)} \langle F_k, F_{k'} \rangle.
%\end{equation}
%
Let us estimate $\langle F_k, F_{k'} \rangle$ for $k \geq k' + 10$.
%the non-diagonal terms on the right hand side of \eqref{AWIOJDIOWAJF190124214}.
We write
\begin{equation}
    \langle F_k, F_{k'} \rangle = \sum\nolimits_{(x_0,t_0) \in \mathcal{E}_k} \sum\nolimits_{(x_1,t_1) \in \mathcal{E}_{k'}} 2^{k \frac{d-1}{2}} 2^{k' \frac{d-1}{2}} \langle {S\!}_{x_0,t_0}, {S\!}_{x_1,t_1} \rangle.
\end{equation}
For each $(x_0,t_0) \in \mathcal{E}_k$, and each $k' \leq k - 10$, consider the set
\begin{equation}
    \mathcal{G}_0(x_0,t_0,k') = \{ (x_1,t_1) \in \mathcal{E}_{k'} : |(t_0 - t_1) - d_M^-(x_0,x_1)| \leq 2^{k' + 5} / R \},
\end{equation}
Also consider the sets of indices
\begin{equation}
    \mathcal{G}_l^+(x_0,t_0,k') = \{ (x_1,t_1) \in \mathcal{E}_{k'} : 2^{l} / R < |(t_0 - t_1) + d_M^+(x_0,x_1)| \leq 2^{l + 1} / R \}.
\end{equation}
and
\begin{equation}
    \mathcal{G}_l^-(x_0,t_0,k') = \{ (x_1,t_1) \in \mathcal{E}_{k'} : 2^{l} / R < |(t_0 - t_1) - d_M^-(x_0,x_1)| \leq 2^{l + 1} / R \}.
\end{equation}
If we set
\begin{equation}
    \mathcal{G}_0(x_0,t_0,k') = (\mathcal{G}_l^+(x_0,t_0,k') \cup \mathcal{G}_l^-(x_0,t_0,k'))
\end{equation}
and
\begin{equation}
\begin{split}
    \mathcal{G}_l(x_0,t_0,k') &= \Big( \mathcal{G}_l^+(x_0,t_0,k') \cup \mathcal{G}_l^-(x_0,t_0,k') \Big)\\
    &\quad\quad\quad\quad\quad\quad - \bigcup\nolimits_{r < l} \Big(\mathcal{G}_r^+(x_0,t_0,k') \cup \mathcal{G}_r^-(x_0,t_0,k') \Big).
\end{split}
\end{equation}
Then $\mathcal{E}_{k'}$ is covered by $\mathcal{G}_0(x_0,t_0,k')$ and $\mathcal{G}_l(x_0,t_0,k')$ for $k' + 5 \leq l \leq 10 \log R$. Define
\begin{equation}
    B_0(x_0,t_0,k') = \sum\nolimits_{(x_1,t_1) \in \mathcal{G}_0(x_0,t_0,k')} 2^{k \frac{d-1}{2}} 2^{k' \frac{d-1}{2}} |\langle {S\!}_{x_0,t_0}, {S\!}_{x_1,t_1} \rangle|,
\end{equation}
and
\begin{equation}
    B_l(x_0,t_0,k') = \sum\nolimits_{(x_1,t_1) \in \mathcal{G}_l(x_0,t_0,k')} 2^{k \frac{d-1}{2}} 2^{k' \frac{d-1}{2}} |\langle {S\!}_{x_0,t_0}, {S\!}_{x_1,t_1} \rangle|.
\end{equation}
We thus have
\begin{equation}
    \langle F_k, F_{k'} \rangle \leq \sum\nolimits_{(x_0,t_0) \in \mathcal{E}_k} B_0(x_0,t_0,k') + \sum\nolimits_{(x_0,t_0) \in \mathcal{E}_k} \sum\nolimits_{k' + 5 \leq l \leq 10 \log R} B_l(x_0,t_0,k').
\end{equation}
Using the density properties of $\mathcal{E}$, we can control the size of the index sets $\mathcal{G}_{\bullet}(x_0,t_0,k')$, and thus control the quantities $B_\bullet(x_0,t_0,k')$. The rapid decay of Proposition \ref{theMainEstimatesForWave} means that only $\mathcal{G}_0(x_0,t_0,k')$ needs to be estimated rather efficiently:
\begin{itemize}[leftmargin=8mm]
    \item Start by bounding the quantities $B_0(x_0,t_0,k')$. If $(x_1,t_1) \in \mathcal{G}_0(x_0,t_0,k')$, then
    \begin{equation}
        |d_M^-(x_0,x_1) - (t_0 - t_1)| \leq 2^{k'+5}/R,
    \end{equation}
    Thus if we consider $\text{Ann}(x_0,t_0,k') = \{ x_1: |d_M^-(x_0,x_1) - t_0| \leq 2^{k'+8}/R \}$, which is a geodesic annulus of radius $\sim 2^k / R$ and thickness $O(2^{k'}/R)$, then
    \begin{equation}
        \mathcal{G}_0(x_0,t_0,k') \subset \text{Ann}(x_0,t_0,k') \times [ 2^{k'}/R, 2^{k'+1}/R ].
    \end{equation}
    The latter set is covered by $O(2^{(k-k')(d-1)})$ balls of radius $2^{k'}/R$, and so the density properties of $\mathcal{E}_{k'}$ implies that
    \begin{equation} \label{G0Size}
        \#( \mathcal{G}_0(x_0,t_0,k') ) \lesssim A 2^{(k-k')(d-1)} 2^{k'}. 
    \end{equation}
    Since $k \geq k' + 10$, for $(x_1,t_1) \in \mathcal{G}_0(x_0,t_0,k')$ we have $d_M(x_0,x_1) \gtrsim 2^k / R$ and so
    \begin{equation} \label{InnerProductG0Size}
        \langle S_{x_0,t_0}, S_{x_1,t_1} \rangle \lesssim R^d 2^{-k \left( \frac{d-1}{2} \right)}.
    \end{equation}
    But putting together \eqref{G0Size} and \eqref{InnerProductG0Size} gives that
    \begin{align*}
        B_0(x_0,t_0,k') &\leq (2^{k \frac{d-1}{2}} 2^{k' \frac{d-1}{2}}) ( A 2^{(k-k')(d-1)} 2^{k'} )  ( R^d 2^{-k \left( \frac{d-1}{2} \right)} )\\
        &= A R^d 2^{k(d-1)} 2^{-k' \frac{d-3}{2}}.
    \end{align*}
    Thus for each $k$, since $d \geq 4$,
    % d = 2: get an extra O( log k ) factor
    \begin{equation} \label{AAAlowbounds}
        \sum\nolimits_{(x_0,t_0) \in \mathcal{E}_k} \sum\nolimits_{k' \in [\log(A), k - 10]} B_0(x_0,t_0,k') \lesssim R^{d} 2^{k (d-1)} \# \mathcal{E}_k.
    \end{equation}

    \item Next we bound $B_l(x_0,t_0,k')$ for $k' + 5 \leq l \leq k - 5$. The set $\mathcal{G}_l^+(x_0,t_0,k')$ is empty in this case. Thus
    \begin{equation}
        \mathcal{G}_l(x_0,t_0,k') \subset ( \text{Ann} \cup \text{Ann}' ) \times [t_0 - 2^{k'} / R, t_0 + 2^{k'} / R],
    \end{equation}
    where
    \begin{equation}
        \text{Ann} = \{ x \in M: |d_M^-(x_0,x) - (t_0 - 2^{k'}) / R| \leq 100 \cdot 2^l / R \}
    \end{equation}
    and
    \begin{equation}
        \text{Ann}' = \{ x \in M: |d_M^-(x_0,x) - (t_0 + 2^{k'}) / R| \leq 100 \cdot 2^l / R \},
    \end{equation}
    These are geodesic annuli of thickness $O(2^l / R)$ and radius $\sim 2^k$. Thus $\mathcal{G}_l(x_0,t_0,k')$ is covered by $O( 2^{(l-k')} 2^{(k-k')(d-1)} )$ balls of radius $2^{k'} / R$, and the density of $\mathcal{E}_{k'}$ implies that
    \begin{equation}
        \#(\mathcal{G}_l(x_0,t_0,k')) \lesssim R A\; 2^{(l-k')} 2^{(k-k')(d-1)} 2^{k'} / R = A 2^{l} 2^{(k-k')(d-1)}.
    \end{equation}
    For $(x_1,t_1) \in \mathcal{G}_l(x_0,t_0,k')$, $d_M(x_0,x_1) \sim 2^k / R$, and thus Proposition \ref{theMainEstimatesForWave} implies
    \begin{equation}
        |\langle {S\!}_{x_0,t_0}, {S\!}_{x_1,t_1} \rangle| \lesssim R^d 2^{-k \frac{d-1}{2}} 2^{-lK}.
    \end{equation}
    Thus for any $K \geq 0$,
    \begin{equation}
    \begin{split}
        B_l(x_0,t_0,k') &\lesssim_K \Big( A 2^{l} 2^{(k-k')(d-1)} \Big)  R^{d} 2^{k \frac{d-1}{2}} 2^{k' \frac{d-1}{2}} \Big( 2^{-k \frac{d-1}{2}} 2^{-lK} \Big)\\
        &\lesssim A R^d 2^l 2^{k(d-1)} 2^{-k' \frac{d-1}{2}} 2^{-lK}.
    \end{split}
    \end{equation}
    Picking $K > 1$, we conclude that
    % -d/2 + 1/2
    \begin{equation} \label{AAAlBoundSmall}
        \sum_{(x_0,t_0) \in \mathcal{E}_k} \sum_{k' \in [\log(A), k - 10]} \sum_{l \in [k' + 10, k - 5]} B_l(x_0,t_0,k') \lesssim R^{d} 2^{k (d-1)} \# \mathcal{E}_k.
    \end{equation}

    \item Finally, let's bound $B_l(x_0,t_0,k')$ for $k - 5 \leq l \leq 10 \log R$. If either $(x_1,t_1) \in \mathcal{G}_l^-(x_0,t_0,k')$ or $(x_1,t_1) \in \mathcal{G}_l^+(x_0,t_0,k')$, then $d_M(x_0,x_1) \lesssim 2^l / R$. So $\mathcal{G}_l(x_0,t_0,k')$ is covered by $O( 2^{(l-k')d} )$ balls of radius $2^{k'} / R$, and thus
    \begin{equation}
        \#(\mathcal{G}_l(x_0,t_0,k')) \lesssim R A\; 2^{(l-k')d} (2^{k'} / R) = A 2^{(l-k')d} 2^{k'}.
    \end{equation}
    For $(x_1,t_1) \in \mathcal{G}_l(x_0,t_0,k')$, we have no good control over $d_M(x_0,t_1)$ aside from the trivial estimate $d_M(x_0,x_1) \lesssim 1$. Thus Proposition \ref{theMainEstimatesForWave} yields a bound of the form
    \begin{equation}
        |\langle {S\!}_{x_0,t_0}, {S\!}_{x_1,t_1} \rangle| \lesssim R^d 2^{-lK}.
    \end{equation}
    Thus we conclude that
    \begin{equation}
    \begin{split}
        B_l(x_0,t_0,k') &\lesssim_N R^{d} 2^{k \frac{d-1}{2}} 2^{k' \frac{d-1}{2}} \Big( A 2^{(l-k')d} 2^{k'} \Big) \Big( 2^{-lN} \Big)\\
        &= A R^d 2^{k \frac{d-1}{2}} 2^{-k' \frac{d-1}{2}} 2^{-lN}
    \end{split}
    \end{equation}
    Picking $K > d$, we conclude that
    \begin{equation} \label{AAAlBoundBig}
        \sum\nolimits_{(x_0,t_0) \in \mathcal{E}_k} \sum\nolimits_{k' \in [\log(A), k - 10]} \sum\nolimits_{l \in [k+10,\log R]} B_l(x_0,t_0,k')  \lesssim R^d.
    \end{equation}
\end{itemize}
The three bounds \eqref{AAAlowbounds}, \eqref{AAAlBoundSmall} and \eqref{AAAlBoundBig} imply that
\begin{equation} \label{DADAOWIDJAWOIDJAWDaweq13412}
    \sum\nolimits_k \sum\nolimits_{k' \in [\log(A), k]} |\langle F_k, F_{k'} \rangle| \lesssim R^d \sum\nolimits_k 2^{k (d-1)} \# \mathcal{E}_k.
\end{equation}
In particular, combining \eqref{DADAOWIDJAWOIDJAWDaweq13412} with \eqref{loglossbound}, we have
\begin{equation} \label{DPOWADPAWKDPOWAKDOPWAK}
    \| F \|_{L^2(M)}^2 \lesssim \log(A) \left( \sum\nolimits_k \| F_k \|_{L^2(M)}^2 + R^d \sum\nolimits_k 2^{k (d-1)} \# \mathcal{E}_k \right).
\end{equation}
Next, consider some parameter $a$ to be determined later, and decompose the interval $[2^{k} / R, 2^{k+1} / R]$ into the disjoint union of length $A^a / R$ intervals of the form
\begin{equation}
    I_{k,\mu} = [ 2^{k} / R + (\mu - 1) A^a / R, 2^{k} / R + \mu A^a / R] \quad\text{for $1 \leq \mu \leq 2^k/A^a$}.
\end{equation}
We thus consider a further decomposition $\mathcal{E}_k = \bigcup \mathcal{E}_{k,\mu}$, where $F_k = \sum F_{k,\mu}$. As before, increasing the implicit constant in the Proposition, we may assume without loss of generality that the set $\{ \mu: \mathcal{E}_{k,\mu} \neq \emptyset \}$ is $10$-separated. We now estimate
\begin{equation}
    \sum\nolimits_{\mu \geq \mu' + 10} |\langle F_{k,\mu}, F_{k,\mu'} \rangle|.
\end{equation}
For $(x_0,t_0) \in \mathcal{E}_{k,\mu}$ and $l \geq 1$, define
\begin{equation}
    \mathcal{H}_l(x_0,t_0,\mu') = \Big\{ (x_1,t_1) \in \mathcal{E}_{k,\mu'} : \frac{2^l A^a}{2R} \leq \max(d_M(x_0,x_1), t_0 - t_1) \leq \frac{2^l A^a}{R} \Big\}.
\end{equation}
Then $\bigcup_{l \geq 1} \mathcal{H}_l(x_0,t_0,\mu')$ covers $\bigcup_{\mu \geq \mu' + 10} \mathcal{E}_{k,\mu'}$. Set
\begin{equation}
    B'_l(x_0,t_0,\mu') = \sum\nolimits_{(x_1,t_1) \in \mathcal{H}_l(x_0,t_0,\mu')} 2^{k(d-1)} |\langle {S\!}_{x_0,t_0}, {S\!}_{x_1,t_1} \rangle|.
\end{equation}
Then
\begin{equation}
    \langle F_{k,\mu}, F_{k,\mu'} \rangle \leq \sum\nolimits_{(x_0,t_0) \in \mathcal{E}_{k,\mu}} \sum\nolimits_l B'_l(x_0,t_0,\mu').
\end{equation}
We now bound the constants $B'_l$. Pick a constant $r$ such that $d_M \leq 2^r d_M^+$ and $d_M \leq 2^r d_M^-$. As in the estimates of the quantities $B_l$, the quantities where $l$ is large have negligible magnitude:
\begin{itemize}[leftmargin=8mm]
    \item For $l \leq k - a \log_2 A + 10 r$, we have $2^l A^a / R \lesssim 2^k / R$. The set $\mathcal{H}_l(x_0,t_0,\mu')$ is covered by $O(1)$ balls of radius $2^l A^a / R$, and density properties imply
    \begin{equation}
        \# \mathcal{H}_l(x_0,t_0,\mu') \lesssim (R A) (2^l A^a / R) = A^{a+1} 2^l
    \end{equation}
    For $(x_1,t_1) \in \mathcal{H}_l(x_0,t_0,\mu')$, we claim that
    \begin{equation}
        2^{k(d-1)} |\langle {S\!}_{x_0,t_0}, {S\!}_{x_1,t_1} \rangle| \lesssim R^{d} 2^{k(d-1)} (2^l A^a)^{- \frac{d-1}{2}}.
    \end{equation}
    Indeed, for such tuples we have
    \begin{equation}
        d_M(x_0,x_1) \gtrsim 2^l A^a / R \quad\text{or}\quad \min\nolimits_{\pm} |d_M^{\pm}(x_0,x_1) - (t_0 - t_1)| \gtrsim 2^l A^a / R,
    \end{equation}
    and the estimate follows from Proposition \ref{theMainEstimatesForWave} in either case. Since $d \geq 4$, we conclude that
    \begin{align} \label{BBBEquation}
    \begin{split}
        &\sum\nolimits_{l \in [1, k - a \log_2 A + 10]} B'_l(x_0,t_0,,\mu')\\
        &\quad\quad\quad\quad \lesssim \sum\nolimits_{l \in [1, k - a \log_2 A + 10]} R^{d} (2^{k(d-1)}) (2^l A^a)^{- \frac{d-1}{2}} (A^{a+1} 2^l)\\
        &\quad\quad\quad\quad \lesssim \sum\nolimits_{l \in [1, k - a \log_2 A + 10]} R^{d}  2^{k(d-1)} 2^{-l \frac{d-3}{2}} A^{1 - a \left( \frac{d-3}{2} \right)}\\
        &\quad\quad\quad\quad \lesssim R^{d} 2^{k(d-1)} A^{1 - a \left( \frac{d-3}{2} \right)}.
    \end{split}
    \end{align}

    \item For $l > k - a \log_2 A + 10 r$, a tuple $(x_1,t_1) \in \mathcal{E}_k$ lies in $\mathcal{H}_l(x_0,t_0,\mu')$ if and only if $2^l A^a / 2 R \leq d_M(x_0,x_1) \leq 2^l A^a / R$, since we always have
    \begin{equation}
         t_0 - t_1 \leq 2^{k+1}/R < 2^{l+r} A^a / 8R.
    \end{equation}
    and so $d_M(x_0,x_1) \geq 2^l A^a / 2R$. And so 
    \[ |(t_0 - t_1) - d_M^-(x_0,x_1)| \geq 2^{-r} 2^l A^a / 2R - 2^{k+1} / R \geq 2^{-r} 2^l A^a / 4R. \]
    Also $|(t_0 - t_1) + d_M^+(x_0,x_1)| \geq |d_M^+(x_0,x_1) \geq 2^{-r} 2^l A^a / 4R$. Thus we conclude from Proposition \ref{theMainEstimatesForWave} that
    \begin{equation}
        2^{k(d-1)} |\langle {S\!}_{x_0,t_0}, {S\!}_{x_1,t_1} \rangle| \lesssim_K R^{d} 2^{k(d-1)} (2^l A^a)^{- K}.
    \end{equation}
    Now $\mathcal{H}_l(x_0,t_0,\mu')$ is covered by $O( (2^{l-k} A^a)^d )$ balls of radius $2^k / R$, and the density properties of $\mathcal{E}_k$ thus imply that
    \begin{equation}
        \#(\mathcal{H}_l(x_0,t_0,\mu')) \lesssim (RA) (2^{l-k} A^a)^d ( 2^k / R ) \lesssim A^{1 + ad} 2^{ld} 2^{-k(d-1)}.
    \end{equation}
    Thus, picking $K > \max(d,1+ad)$, we conclude that
    \begin{align} \label{BBB2}
    \begin{split}
        &\sum\nolimits_{l \geq k - a \log_2 A + 10} B'_l(x_0,t_0,\mu')\\
        &\quad \lesssim R^{d} \sum\nolimits_{l \geq k - a \log_2 A + 10} (2^{k(d-1)}) (2^l A^a)^{-M} A^{1 + ad} 2^{ld} 2^{-k(d-1)} \lesssim R^{d}.
    \end{split}
    \end{align}
    \end{itemize}
    Combining \eqref{BBBEquation} and \eqref{BBB2}, and then summing over the tuples $(x_0,t_0) \in \mathcal{E}_{k,\mu}$, we conclude that
    \begin{equation} \label{DOUIAWJDOIAWJVIO}
        \sum\nolimits_{\mu \geq \mu' + 10} |\langle F_{k,\mu}, F_{k,\mu'} \rangle| \lesssim R^{d} \left( 1 + 2^{k(d-1)} A^{1 - a \left( \frac{d-3}{2} \right)} \right) \# \mathcal{E}_{k,\mu}.
    \end{equation}
    Now summing in $\mu$, \eqref{DOUIAWJDOIAWJVIO} implies that
    \begin{equation} \label{DAOWDHAODWWID}
        \| F_k \|_{L^2(M)}^2 \lesssim \sum\nolimits_\mu \| F_{k,\mu} \|_{L^2(M)}^2 + R^{d} \left( 1 + 2^{k(d-1)} A^{1 - a \left( \frac{d-3}{2} \right)} \right) \# \mathcal{E}_k.
    \end{equation}
The functions in the sum defining $F_{k,\mu}$ are highly coupled, and it is difficult to use anything except Cauchy-Schwarz to break them apart. Since $\# ( \mathcal{T}_R \cap I_{k,\mu}) \sim A^a$, if we set $F_{k,\mu} = \sum_{t \in \mathcal{T}_R \cap I_{k,\mu}} F_{k,\mu,t}$, where
\begin{equation}
    F_{k,\mu,t} = \sum\nolimits_{(x_0,t) \in \mathcal{E}_{k,\mu}} 2^{k \frac{d-1}{2}} {S\!}_{x_0,t}.
\end{equation}
Then Cauchy-Schwarz implies that
\begin{equation} \label{IOJDAOIWDJAWOIJF}
    \| F_{k,\mu} \|_{L^2(M)}^2 \lesssim A^a \sum\nolimits_{t \in \mathcal{T}_R \cap I_{k,\mu}} \| F_{k,\mu,t} \|_{L^2(M)}^2.
\end{equation}
Since the elements of $\mathcal{X}_R$ are $1/R$ separated, the functions in the sum defining $F_{k,\mu,t}$ are quite orthogonal to one another; Proposition \ref{theMainEstimatesForWave} implies that for $x_0 \neq x_1$,
\begin{equation}
    |\langle {S\!}_{x_0,t}, {S\!}_{x_1,t} \rangle| \lesssim R^d ( R d_M(x_0,x_1) )^{-K} \quad\text{for all $K \geq 0$}.
\end{equation}
Thus
\begin{equation}
    \| F_{k,\mu,t} \|_{L^2(M)}^2 \lesssim R^{d} 2^{k(d-1)} \# (\mathcal{E}_k \cap (M \times \{ t \})).
\end{equation}
But this means that
\begin{equation} \label{eoqiejoiwjdoiaevjoa}
    A^a \sum\nolimits_{t \in \mathcal{T}_R \cap I_{k,\mu}} \| F_{k,\mu,t} \|_{L^2(M)}^2 \lesssim R^{d} 2^{k(d-1)} A^a \# \mathcal{E}_{k,\mu}.
\end{equation}
Thus \eqref{DAOWDHAODWWID}, \eqref{IOJDAOIWDJAWOIJF}, and \eqref{eoqiejoiwjdoiaevjoa} imply that
\begin{equation}
\begin{split}
    \| F_k \|_{L^2(M)}^2 &\lesssim \sum\nolimits_\mu \| F_{k,\mu} \|_{L^2(M)}^2 + R^{d} \left( 1 + 2^{k(d-1)} A^{1 - a \left( \frac{d-3}{2} \right)} \right) \# \mathcal{E}_k\\
    &\lesssim R^{d} \left( 2^{k(d-1)} A^a + (1 + 2^{k(d-1)} A^{1 - a \left( \frac{d-3}{2} \right)} \right) \# \mathcal{E}_k.
\end{split}
\end{equation}
% u^a = u^{1 - a(d-3)/2}
% a ( (d-1)/2 ) = 1
%
Optimizing by picking $a = 2 / (d-1)$ gives that
\begin{equation} \label{OICJOAIEVJAIOJFAOIJRIO}
    \| F_k \|_{L^2(M)}^2 \lesssim R^{d} 2^{k(d-1)} A^{\frac{2}{d-1}} \# \mathcal{E}_k.
\end{equation}
The proof is completed by combining \eqref{DPOWADPAWKDPOWAKDOPWAK} with \eqref{OICJOAIEVJAIOJFAOIJRIO}.
\end{proof}

\subsection{\boldmath $L^p$ Estimates Via Density Decompositions} \label{regime1densitydecomposition}

Combining the $L^2$ analysis of Section \ref{regime1firstsection} with a density decomposition argument, we can now prove the following Lemma, which completes the analysis of the operator $T^I$ in Proposition \ref{TjbLemma}.

\begin{lemma} \label{regime1Lemma}
    Using the notation of Proposition \ref{TjbLemma}, let $T^I = \sum\nolimits_{t_0 \in \mathcal{T}_R} T^I_{t_0}$, where
    \begin{equation}
        T^I_{t_0} = b_{t_0}^I(t) (Q_R \circ e^{2 \pi i t P} \circ Q_R)\; dt.
    \end{equation}
    Then for $1 \leq p < 2 (d-1) / (d+1)$,
    \begin{equation}
        \| T^I u \|_{L^p(M)} \lesssim R^{-1/p'} \left( \sum\nolimits_{t_0 \in \mathcal{T}_R} \Big[ \| b^I_{t_0} \|_{L^p(I_0)} \langle R t_0 \rangle^{\alpha(p)} \Big]^p \right)^{1/p} \| u \|_{L^p(M)}.
    \end{equation}
\end{lemma}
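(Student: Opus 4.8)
The plan is to prove Lemma \ref{regime1Lemma} by interpolating between an $L^2$ bound — extracted from Proposition \ref{L2DensityProposition} — and a trivial $L^1 \to L^1$ bound, after a density decomposition of the index set $\mathcal{E} = \mathcal{X}_R \times \mathcal{T}_R$. First I would normalize: by homogeneity it suffices to treat a single input $u$ with $\|u\|_{L^p(M)} = 1$, decompose $u = \sum_{x_0 \in \mathcal{X}_R} u_{x_0}$ with $\supp(u_{x_0}) \subset B(x_0,1/R)$ via a finitely-overlapping partition of unity subordinate to a maximal $1/R$-separated set, and normalize each $b^I_{t_0}$ and $u_{x_0}$ to be $L^1$-normalized, absorbing the scalars $\beta_{t_0} := \|b^I_{t_0}\|_{L^1(I_0)}$ and $\mu_{x_0} := \|u_{x_0}\|_{L^1(M)}$ into weights attached to the index $(x_0,t_0)$. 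Since $\supp(b^I_{t_0}) \subset I_{t_0}$ has length $\lesssim 1/R$, Hölder gives $\beta_{t_0} \lesssim R^{-1/p'}\|b^I_{t_0}\|_{L^p(I_0)}$, and similarly $\mu_{x_0} \lesssim R^{-d/p'}\|u_{x_0}\|_{L^p(M)}$; this is the mechanism by which the powers of $R$ and the $\ell^p$ structure on the right-hand side will emerge.

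Next I would set up the density decomposition. Following the Heo--Nazarov--Seeger strategy referenced in item (B) of the proof summary, I would write $\mathcal{E} = \bigsqcup_{n \geq 0} \mathcal{E}^{(n)}$ where $\mathcal{E}^{(n)}$ is the portion of the index set of ``density type $\sim 2^n$'' — more precisely, for each dyadic annular piece $\mathcal{E}_k$ (as in Proposition \ref{L2DensityProposition}, organized by $|t_0| \sim 2^k/R$) one peels off the indices where the local concentration on balls of diameter between $1/R$ and $2^k/R$ first exceeds $2^n R\,\mathrm{diam}$, using a stopping-time / Vitali-type argument so that $\mathcal{E}^{(n)}_k$ has density type $(C2^n, 2^k)$ and, crucially, the total mass obeys $\#\mathcal{E}^{(n)} \lesssim 2^{-n}\cdot(\text{something controlling } \sum 2^{k(d-1)}\#\mathcal{E}_k)$ — or more honestly, the weighted count $\sum_k 2^{k(d-1)}\#\mathcal{E}^{(n)}_k$ decays geometrically in $n$ relative to a master quantity. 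For the piece $T^{I,(n)}u := \sum_{(x_0,t_0)\in\mathcal{E}^{(n)}} f_{x_0,t_0}$ I would apply Proposition \ref{L2DensityProposition} with $A \sim 2^n$ to get an $L^2$ bound with constant $\sqrt{n}\,2^{n/(d-1)}$ times the square function, and separately a crude $L^1$ bound $\|T^{I,(n)}u\|_{L^1} \lesssim \sum |b| * |$ kernel $|$ which costs a factor like $2^n$ (the worst-case overlap) but no more; then Riesz--Thorin interpolation at exponent $p \in (1, 2(d-1)/(d+1))$ gives a bound with constant $2^{-\delta(p) n}$ for some $\delta(p) > 0$ precisely in this $p$-range, so summing in $n$ converges. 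Reassembling the dyadic-in-$k$ and per-$(x_0,t_0)$ weights through the finite-overlap property $\|\sum g_{x_0}\|_{L^p}^p \sim \sum \|g_{x_0}\|_{L^p}^p$ and tracking the powers of $R$ yields the claimed inequality.

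The main obstacle, as in \cite{HeoandNazarovandSeeger}, is getting the interpolation exponents and the density-exponent bookkeeping to line up so that the net power of $2^n$ is summable exactly on $1 \leq p < 2(d-1)/(d+1)$ and not on a strictly smaller range. Concretely: the $L^2$ estimate from Proposition \ref{L2DensityProposition} loses $A^{1/(d-1)} = 2^{n/(d-1)}$, the $L^1$ estimate loses roughly $2^n$, so interpolation with $\theta$ determined by $1/p = 1-\theta/2$ produces a loss $2^{n[(1-\theta)+\theta/(d-1)]}$, and this must be beaten by the geometric gain $2^{-n}$ coming from the density decomposition's mass bound $\#\mathcal{E}^{(n)} \lesssim 2^{-n}\#\mathcal{E}$ (suitably weighted); the condition $p < 2(d-1)/(d+1)$ is exactly what makes $(1-\theta) + \theta/(d-1) < 1$. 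Setting this up requires care in two places: (i) the density decomposition must be done at the level of the $k$-graded pieces simultaneously, respecting the requirement in Proposition \ref{L2DensityProposition} that $\mathcal{E}_k$ have density type $(A, 2^k)$ with the \emph{same} $A$ for all $k$; and (ii) the $L^1$ bound for $T^{I,(n)}$ must genuinely only cost $O(2^n)$, which uses the pointwise kernel estimates of Proposition \ref{theMainEstimatesForWave} (the $\langle R d_M\rangle^{-(d-1)/2}$ decay and rapid decay off the annuli) to sum the contributions of overlapping wave packets. I would present the density decomposition lemma as a separate sub-step, then the two endpoint bounds, then the interpolation and summation, keeping the routine kernel estimates compressed.
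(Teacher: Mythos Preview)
Your overall architecture is right — discretize, normalize to $L^1$-unit pieces, density-decompose the index set, and feed each density class into Proposition \ref{L2DensityProposition}. But the mechanism you propose for closing the argument has a genuine gap.

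The claim that ``the weighted count $\sum_k 2^{k(d-1)}\#\mathcal{E}^{(n)}_k$ decays geometrically in $n$'' is false. The density decomposition is a disjoint partition $\mathcal{E}_k = \bigsqcup_n \mathcal{E}^{(n)}_k$, so $\sum_n \#\mathcal{E}^{(n)}_k = \#\mathcal{E}_k$; nothing forces any single level to be small, and in particular a set entirely at density $\sim 2^{n_0}$ has $\#\mathcal{E}^{(n_0)}_k = \#\mathcal{E}_k$. Likewise your ``$L^1$ bound costs $2^n$'' is not correct: the triangle-inequality $L^1$ bound on $\sum_{(x_0,t_0) \in \mathcal{E}^{(n)}} S_{x_0,t_0}$ is simply $\lesssim \#\mathcal{E}^{(n)}$, with no density factor, so plain $L^1$--$L^2$ interpolation gives $\|F^{(n)}\|_{L^p} \lesssim 2^{n\theta/(d-1)}(\#\mathcal{E}^{(n)})^{1/p}$, and without a geometric decay of $\#\mathcal{E}^{(n)}$ you cannot sum in $n$.

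What actually produces the gain, in the paper as in \cite{HeoandNazarovandSeeger}, is an \emph{exceptional-set} argument. At density level $2^u$ the indices are covered by balls $\{B_n\}$ with $\sum_n \operatorname{rad}(B_n) \lesssim 2^{-u}R^{-1}\#\mathcal{E}_k$; the associated wave packets are concentrated (via the pointwise bounds of Proposition \ref{theMainEstimatesForWave}) on geodesic annuli of radius $\sim 2^k/R$ and thickness $\sim\operatorname{rad}(B_n)$, whose union $\Lambda_u$ therefore has measure $|\Lambda_u| \lesssim R^{-d} 2^{-u}\sum_k 2^{k(d-1)}\#\mathcal{E}_k$. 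On $\Lambda_u$ one uses H\"older $\|F_u\|_{L^p(\Lambda_u)} \le |\Lambda_u|^{1/p-1/2}\|F_u\|_{L^2}$, and the net exponent of $2^u$ is $\tfrac{1}{d-1} - (\tfrac{1}{p}-\tfrac{1}{2})$, negative exactly for $p < 2(d-1)/(d+1)$. On $\Lambda_u^c$ the tail decay of the wave packets gives an $L^1$ bound with arbitrary polynomial decay in $2^u$, which after interpolation with the $L^2$ bound is again summable. One further point you omit: the paper first reduces via real interpolation to the restricted strong-type inequality (coefficients $c = \mathbb{1}_\mathcal{E}$), which is what makes the density decomposition of a fixed index set $\mathcal{E}$ the right object to study.
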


We prove Lemma \ref{regime1Lemma} via a \emph{density decomposition} argument, adapted from the methods of \cite{HeoandNazarovandSeeger}. Given a function $u: M \to \CC$, we use a partition of unity to write
\begin{equation}
    u = \sum\nolimits_{x_0 \in \mathcal{X}_R} u_{x_0},
\end{equation}
where $u_{x_0}$ is supported on $B(x_0,1/R)$, and
\begin{equation}
\begin{split}
    \left( \sum\nolimits_{x_0 \in \mathcal{X}_R} \| u_{x_0} \|_{L^1(M)}^p \right)^{1/p} &\lesssim R^{-d/p'} \left( \sum\nolimits_{x_0 \in \mathcal{X}_R} \| u_{x_0} \|_{L^p(M)}^p \right)^{1/p} \lesssim R^{-d/p'} \| u \|_{L^p(M)}.
\end{split}
\end{equation}
% L^p is H^p R^{-1}
% L^1 is H^p R^{-p}
Define
\begin{equation}
    \mathcal{X}_{a} = \{ x_0 \in \mathcal{X}_R: 2^{a-1} < \| u_{x_0} \|_{L^1(M)} \leq 2^a \}
\end{equation}
and let
\begin{equation}
    \mathcal{T}_{b} = \{ t_0 \in \mathcal{T}_R: 2^{b-1} < \| b_{t_0}^I \|_{L^1(M)} \leq 2^b \}.
\end{equation}
Define functions $f_{x_0,t_0} = T_{t_0}^I u_{x_0}$. Lemma \ref{regime1Lemma} follows from the following result.

\begin{lemma} \label{LpBoundLemma}
    Fix $u \in L^p(M)$, and consider $\mathcal{X}_{a}$, $\mathcal{T}_{b}$, and $\{ f_{x_0,t_0} \}$ as above. For any function $c: \mathcal{X}_R \times \mathcal{T}_R \to \CC$, and $1 < p < 2 (d-1) / (d+1)$,
    \begin{equation}
    \begin{split}
    &\Bigg\| \sum\nolimits_{a,b} \sum\nolimits_{(x_0,t_0) \in \mathcal{X}_{a} \times \mathcal{T}_{b}} 2^{-(a+b)} \langle R t_0 \rangle^{\frac{d-1}{2}} c(x_0,t_0) f_{x_0,t_0} \Big\|_{L^p(M)}\\
    &\quad\quad\quad\quad \lesssim R^{ d / p'} \left( \sum\nolimits_{(x_0,t_0) \in \mathcal{X}_{a} \times \mathcal{T}_{b}} |c(x_0,t_0)|^p \langle R t_0 \rangle^{d-1} \right)^{1/p}.
    \end{split}
    \end{equation}
\end{lemma}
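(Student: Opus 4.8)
\medskip

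The plan is to interpolate between an $L^2$ estimate coming from Proposition \ref{L2DensityProposition} and a trivial $L^1$-type (or $L^p$-triangle-inequality) bound, after first splitting the index set $\mathcal{X}_a \times \mathcal{T}_b$ into pieces of controlled density. First I would fix $a,b$ and work with a single block $\mathcal{E} \subset \mathcal{X}_a \times \mathcal{T}_b$; the full sum over $a,b$ will be recovered at the end by summing a geometric series, using that $\sum_a 2^{a}\#\mathcal{X}_a \lesssim \|u\|_{L^1}$ — wait, more precisely that the weighted $\ell^p$ norms telescope, so I only lose a constant (or a harmless $\log$) after summing over $a,b$. Normalising, set $v_{x_0} = 2^{-a} u_{x_0}$ and $\beta_{t_0} = 2^{-b} b^I_{t_0}$, so that $\|v_{x_0}\|_{L^1(M)} \le 1$ and $\|\beta_{t_0}\|_{L^1(I_0)}\le 1$; then $2^{-(a+b)} f_{x_0,t_0} = T^I_{t_0}\{v_{x_0}\}$ has exactly the normalisation of the functions $S_{x_0,t_0}$ in Proposition \ref{L2DensityProposition} (up to the $Q_R$ cutoffs, which are harmless since $Q_R$ is bounded on every $L^p$ and $T^I_{t_0} = Q_R \circ T^I_{t_0} \circ Q_R$). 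The target inequality then reads
\begin{equation}
\Big\| \sum\nolimits_{(x_0,t_0)\in\mathcal{E}} \langle R t_0\rangle^{\frac{d-1}{2}} c(x_0,t_0)\, S_{x_0,t_0}\Big\|_{L^p(M)} \lesssim R^{d/p'}\Big(\sum\nolimits_{(x_0,t_0)\in\mathcal{E}} |c(x_0,t_0)|^p \langle R t_0\rangle^{d-1}\Big)^{1/p},
\end{equation}
which I would prove for an arbitrary finitely supported coefficient function $c$.

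\medskip

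The core is the density decomposition. Following \cite{HeoandNazarovandSeeger}, for each dyadic level $A = 2^j$ I would peel off the maximal subfamily $\mathcal{E}^{(A)} \subset \mathcal{E}$ whose $k$-th dyadic-in-$t_0$ slice $\mathcal{E}^{(A)}_k$ has density type $(A,2^k)$ but fails density type $(A/2, 2^k)$, proceeding greedily from large $A$ down. A standard covering/maximality argument (Vitali-type on the balls $B\subset \mathcal{X}_R\times\mathcal{T}_R$ witnessing the density violation) shows that the leftover mass satisfies $\sum_{(x_0,t_0)\in\mathcal{E}^{(A)}_k}\langle Rt_0\rangle^{d-1}|c(x_0,t_0)|^p \lesssim A^{-(p-1)}\cdot(\text{something controlled})$ — i.e. the high-density pieces are exponentially rare relative to the weighted counting measure, which is what ultimately makes the sum over $A$ converge for $p<2(d-1)/(d+1)$. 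On each fixed-density piece $\mathcal{E}^{(A)}$ I would prove two bounds for $F^{(A)} = \sum_{(x_0,t_0)\in\mathcal{E}^{(A)}} \langle Rt_0\rangle^{(d-1)/2} c(x_0,t_0) S_{x_0,t_0}$: (i) the $L^2$ bound, obtained by writing $c$ in terms of its level sets of $|c|$ and applying Proposition \ref{L2DensityProposition} to each (absorbing the extra $\langle Rt_0\rangle^{(d-1)/2}$ into the $2^{k(d-1)/2}$ there), giving roughly
\begin{equation}
\|F^{(A)}\|_{L^2(M)}^2 \lesssim R^d \log(A) A^{\frac{2}{d-1}} \sum\nolimits_{(x_0,t_0)\in\mathcal{E}^{(A)}} \langle Rt_0\rangle^{d-1}|c(x_0,t_0)|^2;
\end{equation}
and (ii) the $L^1$ (really $L^p$-triangle) bound $\|S_{x_0,t_0}\|_{L^1(M)} \lesssim R^{d}\langle Rt_0\rangle^{-(d-1)/2}\cdot R^{-d}$ — i.e. $\|S_{x_0,t_0}\|_{L^1}\lesssim \langle Rt_0\rangle^{-(d-1)/2}$ from the pointwise estimate in Proposition \ref{theMainEstimatesForWave} integrated over the annulus of radius $\sim |t_0|$ and thickness $1/R$ — which after restoring the weight gives $\|F^{(A)}\|_{L^1(M)} \lesssim \sum |c(x_0,t_0)|$.

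\medskip

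Then I would interpolate: writing $1/p = \theta/1 + (1-\theta)/2$, Hölder/interpolation of the two bounds on $F^{(A)}$ yields an $L^p$ bound of the form $\|F^{(A)}\|_{L^p} \lesssim R^{d/p'}(\log A)^{1/2}A^{1/(d-1)}\big(\sum_{\mathcal{E}^{(A)}}|c|^p\langle Rt_0\rangle^{d-1}\big)^{1/p}$, where the power of $R$ comes out to $d/p'$ by bookkeeping the $R^d$ from $L^2$ against the $R^0$ from $L^1$. Finally I sum over $A = 2^j$: the density-defect estimate gives $\sum_{\mathcal{E}^{(A)}}|c|^p\langle Rt_0\rangle^{d-1} \lesssim A^{-\delta}\sum_{\mathcal{E}}|c|^p\langle Rt_0\rangle^{d-1}$ for some $\delta>0$ once $p$ is strictly below the critical exponent, and since $(\log A)^{1/2}A^{1/(d-1)}A^{-\delta/p}$ is summable in $j$ precisely when $1/(d-1) - \delta/p < 0$, i.e. for $p < 2(d-1)/(d+1)$, the sum converges to the claimed bound. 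The main obstacle I expect is the bookkeeping of the density decomposition: verifying that the greedy extraction produces slices of the right density type \emph{simultaneously} for all $k$ (the parameter $2^k$ in "density type $(A,2^k)$" couples the $t_0$-scale to the allowed box size), and quantifying the density defect so that the exponent $\delta$ is large enough to beat the $A^{1/(d-1)}$ loss exactly at the stated range of $p$ — this is where the precise form of the scale-invariant density definition and the structure of $\mathcal{E}_k$ must be used carefully, essentially reproducing the combinatorial heart of \cite{HeoandNazarovandSeeger} in the variable-coefficient setting.
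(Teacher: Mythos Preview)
Your proposal identifies the right ingredients (density decomposition, the $L^2$ bound from Proposition \ref{L2DensityProposition}, pointwise $L^1$ estimates, interpolation), but the mechanism you propose for summing over the density parameter $A$ has a genuine gap.

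The claimed ``density-defect estimate''
\[
\sum_{\mathcal{E}^{(A)}}|c|^p\langle Rt_0\rangle^{d-1} \lesssim A^{-\delta}\sum_{\mathcal{E}}|c|^p\langle Rt_0\rangle^{d-1}
\]
is false in general: nothing prevents the entire set $\mathcal{E}$ from sitting at one high density level, so there is no reason the weighted \emph{cardinality} of $\mathcal{E}^{(A)}$ should decay in $A$. What the density decomposition actually gives (via Vitali) is that $\mathcal{E}_k(u)$ is covered by balls $\{B_{k,u,n}^*\}$ with $\sum_n \text{rad}(B_{k,u,n}) \lesssim 2^{-u}R^{-1}\#\mathcal{E}_k$. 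This controls the \emph{measure} of the union of the associated geodesic annuli $\Lambda_u$ where the wave packets concentrate, namely $|\Lambda_u| \lesssim R^{-d}2^{-u}\sum_k 2^{k(d-1)}\#\mathcal{E}_k$, but not the number of indices. Your global $L^1$--$L^2$ interpolation cannot see this spatial clustering, so you have no way to beat the $A^{1/(d-1)}$ loss.

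The paper (following \cite{HeoandNazarovandSeeger}) instead first reduces to $c = \mathbb{I}_{\mathcal{E}}$ by restricted-type interpolation, and then splits \emph{in space} for each density level $u$: on the exceptional set $\Lambda_u$ one uses H\"older,
\[
\|F_{k,u}\|_{L^p(\Lambda_u)} \leq |\Lambda_u|^{1/p-1/2}\,\|F_{k,u}\|_{L^2},
\]
and the gain $2^{-u(1/p-1/2)}$ from the small measure beats the $2^{u/(d-1)}$ loss from Proposition \ref{L2DensityProposition} exactly when $1/p-1/2 > 1/(d-1)$, i.e. $p < 2(d-1)/(d+1)$. Off $\Lambda_u$ one uses that the wave packets have rapidly decaying tails outside their annuli, together with the lower bound $\text{rad}(B_{k,u,n}) \gtrsim 2^{u/(d-1)}/R$ (forced by $1/R$-separation), to get an $L^1$ bound on $\Lambda_u^c$ with large decay in $u$; interpolating that with the $L^2$ bound gives the $L^p$ estimate on $\Lambda_u^c$. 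This good/bad-set split is the step your outline is missing.

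A minor point: your $L^1$ computation has a sign error. Integrating the pointwise bound over the radius-$|t_0|$, thickness-$1/R$ annulus gives $\|S_{x_0,t_0}\|_{L^1(M)} \lesssim \langle Rt_0\rangle^{+(d-1)/2}$, not $\langle Rt_0\rangle^{-(d-1)/2}$; the annulus volume $\sim |t_0|^{d-1}/R$ outweighs the $\langle Rd_M\rangle^{-(d-1)/2}$ decay.
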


To see how Lemma \ref{LpBoundLemma} implies Lemma \ref{regime1Lemma}, set $c(x_0,t_0) = 2^{a+b} \langle R t_0 \rangle^{- \frac{d-1}{2}}$ for $x_0 \in \mathcal{X}_{a}$ and $t_0 \in \mathcal{T}_{b}$. Then Lemma \ref{LpBoundLemma} implies that
\begin{equation}
\begin{split}
    &\| T^I u \|_{L^p(M)}\\
    &\quad = \left\| \sum f_{x_0,t_0} \right\|_{L^p(M)}\\
    &\quad \lesssim R^{d/p'} \left( \sum\nolimits_{(x_0,t_0)} \left[ \| b_{t_0}^I \|_{L^1(\RR)} \| u_{x_0} \|_{L^1(M)} \langle R t_0 \rangle^{\alpha(p)} \right]^p \right)^{1/p}\\
    &\quad \lesssim R^{-1/p'} \left( \sum\nolimits_{t_0} \Big[ \| b_{t_0}^I \|_{L^p(I_0)} \langle R t_0 \rangle^{\alpha(p)} \Big]^p \right)^{1/p}\left( \sum\nolimits_{x_0} \left[ \| u_{x_0} \|_{L^1(M)}  R^{d/p'} \right]^p \right)^{1/p}\\
    &\quad \lesssim R^{-1/p'} \left( \sum\nolimits_{t_0} \Big[ \| b_{t_0}^I \|_{L^p(I_0)} \langle R t_0 \rangle^{\alpha(p)} \Big]^p \right)^{1/p} \| u \|_{L^p(M)}.
\end{split}
\end{equation}
% C = alpha(p) + d/p'
%   = (d+1)/2 - 1/p
%
Thus we have proved Lemma \ref{regime1Lemma}. We take the remainder of this section to prove Lemma \ref{LpBoundLemma} using a density decomposition argument.

\begin{proof}[Proof of Lemma \ref{LpBoundLemma}]

For $p = 1$, this inequality follows simply by applying the triangle inequality, and applying the pointwise estimates of Proposition \ref{theMainEstimatesForWave}. By methods of interpolation, to prove the result for $p > 1$, we thus only need only prove a restricted strong type version of this inequality. In other words, we can restrict $c$ to be the indicator function of a set $\mathcal{E} \subset \mathcal{X}_R \times \mathcal{T}_R$. Write $\mathcal{E} = \bigcup_{k \geq 0} \mathcal{E}_{k,a,b}$, where
\begin{equation}
    \mathcal{E}_{0,a,b} = \{ (x,t) \in \mathcal{E} \cap (\mathcal{X}_{a} \times \mathcal{T}_{b}) : |t| \leq 1/R \}
\end{equation}
and for $k > 0$, let
\begin{equation}
    \mathcal{E}_{k,a,b} = \{ (x,t) \in \mathcal{E} \cap (\mathcal{X}_{a} \times \mathcal{T}_{b}) : 2^{k-1} / R < |t| \leq 2^{k} / R \}.
\end{equation}
Write
\begin{equation}
    F_k = \sum\nolimits_{a,b} \sum\nolimits_{(x_0,t_0) \in \mathcal{E}_{k,a,b}} 2^{k \left( \frac{d-1}{2} \right)} 2^{-(a+b)} f_{x_0,t_0}.
\end{equation}
Our proof will be completed if we can show that
\begin{equation} \label{oDOIAWJCVOIEJOIJER1312s}
    \Big\| \sum\nolimits_k F_k \Big\|_{L^p(M)} \lesssim R^{d ( 1 - 1/p )} \Big( \sum\nolimits_k 2^{k(d-1)} \# \mathcal{E}_k \Big)^{1/p}.
\end{equation}
To prove \eqref{oDOIAWJCVOIEJOIJER1312s}, we perform a density decomposition on the sets $\{ \mathcal{E}_k \}$. For $u \geq 0$, let $\widehat{\mathcal{E}}_k(u)$ be the set of all points $(x_0,t_0) \in \mathcal{E}_k$ that are contained in a ball $B$ with $\text{rad}(B) \leq 2^{k} / 100 R$, such that $\#( \mathcal{E}_k \cap B ) \geq R 2^{u} \text{rad}(B)$. Then define
\begin{equation}
    \mathcal{E}_k(u) = \widehat{\mathcal{E}}_k(u) - \bigcup\nolimits_{u' > u} \widehat{\mathcal{E}}_k(u').
\end{equation}
Because the set $\mathcal{E}_k$ is $1/R$ discretized, we have
\begin{equation}
    \mathcal{E}_k = \bigcup\nolimits_{u \geq 0} \mathcal{E}_k(u).
\end{equation}
Moreover $\mathcal{E}_k(u)$ has density type $(R 2^{u}, 2^{k} / 100 R)$, and thus by a covering argument, also has density type $(C_d R 2^{u}, 2^{k} / R)$ for $C_d = 1000^d$. Furthermore, there are disjoint balls $B_{k,u,1},\dots,B_{k,u,N_{k,u}}$ of radius at most $2^{k} / 100 R$ such that
\begin{equation}
    \sum\nolimits_n \text{rad}(B_{k,u,n}) \leq 2^{-u} / R \# \mathcal{E}_k.
\end{equation}
and such that $\mathcal{E}_k(u)$ is covered by the balls $\{ B_{k,u,n}^* \}$, where, for a ball $B$, $B^*$ denotes the ball with the same center as $B$, but 5 times the radius. Now write
\begin{equation}
    F_{k,u} = \sum\nolimits_{a,b} \sum\nolimits_{(x_0,t_0) \in \mathcal{E}_{k,a,b}(u)} 2^{k \left( \frac{d-1}{2} \right)} 2^{-(a+b)} f_{x_0,t_0}.
\end{equation}
Using the density assumption on $\mathcal{E}_k(u)$, we can apply Lemma \ref{L2DensityProposition} of the last section, which implies that, with ${S\!}_{x_0,t_0} = 2^{-(a+b)} f_{x_0,t_0}$ for $(x_0,t_0) \in \mathcal{E}_{k,a,b}(u)$,
\begin{equation} \label{DOIWAJOIAJVOIWAJFOIWF}
\begin{split}
    \Big\| \sum\nolimits_k F_{k,u} \Big\|_{L^2(M)} \lesssim R^{d/2} \left( u^{1/2} 2^{u \left( \frac{1}{d-1} \right)} \right) \left( \sum\nolimits_k 2^{k(d-1)} \# \mathcal{E}_k \right)^{1/2}.
\end{split}
\end{equation}
Let $(y_{k,u,n}, t_{k,u,n})$ denote the center of $B_{k,u,n}$. Then
\begin{equation}
    \sum\nolimits_{(x_0,t_0) \in [B_{k,u,n} \cap \mathcal{E}_k(u)]} 2^{-(a+b)} f_{x_0,t_0}
\end{equation}
has mass concentrated on the geodesic annulus $\text{Ann}_{k,u,n} \subset M$ with center $y_{k,u,n}$, with radius $t_{k,u,n} \sim 2^{k} / R$, and with thickness $5\; \text{rad}(B_{k,u,n})$. Thus
\begin{equation}
    \sum\nolimits_n |\text{Ann}_{k,u,n}| \lesssim \sum\nolimits_{n} (2^{k} / R)^{d-1} \text{rad}(B_{k,u,n}) \leq (2^{k}/R)^{d-1} R^{-1} 2^{-u} \# \mathcal{E}_k.
\end{equation}
If we set $\Lambda_u = \bigcup_k \bigcup_n \text{Ann}_{k,u,n}$, then
\begin{equation}
    |\Lambda_u| \lesssim R^{-d} 2^{-u} \sum\nolimits_k 2^{k(d-1)} \# \mathcal{E}_k
\end{equation}
Since $1/p - 1/2 > 1/(d-1)$, so that $\alpha(p) > 1$, H\"{o}lder's inequality implies that
\begin{equation}
\begin{split}
    \Big\| \sum\nolimits_k F_{k,u} \Big\|_{L^p(\Lambda_u)} &\lesssim |\Lambda_u|^{1/p - 1/2} \Big\| \sum\nolimits_k F_{k,u} \Big\|_{L^2(\Lambda_{k,u})}\\
    &\lesssim \left( R^{-d} 2^{-u} \sum\nolimits_k 2^{k(d-1)} \# \mathcal{E}_k \right)^{1/p - 1/2}\\
    &\quad\quad\quad\quad \left( R^{d} \left( u 2^{u \left( \frac{2}{d-1} \right)} \right) \sum\nolimits_k 2^{k(d-1)} \# \mathcal{E}_k \right)^{1/2}\\
    &= R^{d(1-1/p)} \left( u^{1/2} 2^{-u \left( \frac{\alpha(p) - 1}{d - 1} \right)}  \right) \left( \sum\nolimits_k 2^{k(d-1)} \# \mathcal{E}_k \right)^{1/p}\\
    &\lesssim R^{d(1 - 1/p)} 2^{-u \varepsilon} \left( \sum\nolimits_k 2^{k(d-1)} \# \mathcal{E}_k \right)^{1/p}
\end{split}
\end{equation}
for some suitable small $\varepsilon > 0$. For each $(x_0,t_0) \in \mathcal{E}_{k,a,b}(u) \cap B_{k,u,n}$, we calculate using the pointwise bounds for the functions $\{ f_{x_0,t_0} \}$ that
%
%\begin{align*}
%        \| 2^{-(l+r)} f_{x_0,t_0} \|_{L^q(\text{Ann}_n^c)}^q &= 2^{jdq} \int_{\text{Ann}_j^c} \langle 2^j d_M(x,x_0) \rangle^{- q \left( \frac{d-1}{2} \right)} \langle 2^j |t_0 - d_M(x,x_0)| \rangle^{-qM}\; dx \\
%        &\lesssim 2^{jq \left( \frac{d+1}{2} - M \right)} \int_{5\; \text{rad}(B_n)}^{O(1)} (2^{k-j} + s)^{(d-1) - q \left( \frac{d-1}{2} \right)} s^{-qM}\; ds\\
%    &\lesssim 2^{k(d-1)(1-q/2)} 2^{j[ q \left( d - M \right) - (d-1) ]} \text{rad}(B_n)^{1 - qM}.
%\end{align*}
\begin{equation}
\begin{split}
        \| 2^{-(a+b)} f_{x_0,t_0} \|_{L^1(\Lambda(u)^c)} &= R^{d} \int_{\text{Ann}_R^c} \langle R d_M(x,x_0) \rangle^{- \left( \frac{d-1}{2} \right)} \langle R |t_0 - d_M(x,x_0)| \rangle^{-M}\; dx\\
        &\lesssim R^{ \left( \frac{d+1}{2} - M \right)} \int_{5\; \text{rad}(B_{k,u,n})}^{O(1)} ( t_{k,u,n} + s)^{\left( \frac{d-1}{2} \right)} s^{-M}\; ds\\
        &\lesssim R^{ \left( \frac{d+1}{2} - M \right)} \text{rad}(B_{k,u,n})^{1-M} t_{k,u,n}^{\left( \frac{d-1}{2} \right)}\\
        &\lesssim 2^{k \left( \frac{d-1}{2} \right)} (R \text{rad}(B_{k,u,n}))^{1 - M}.
\end{split}
\end{equation}
Thus
%
%\[ \| 2^{k \left( \frac{d-1}{2} \right)} 2^{-(l+r)} f_{x_0,t_0} \|_{L^q(\text{Ann}_n^c)} \lesssim 2^{k \left( \frac{d-1}{q} \right) } 2^{jd/q'} (2^j \text{rad}(B_n))^{1/q - M} \]\
\begin{equation}
    \| 2^{k \left( \frac{d-1}{2} \right)} 2^{-(a+b)} f_{x_0,t_0} \|_{L^1(\text{Ann}_n^c)} \lesssim 2^{k(d-1)} (R \text{rad}(B_{k,u,n}))^{1 - M}
\end{equation}
%
% 2^{(k-j)[-p(d-1)/2]} rad(B_n)^{d-pM}
%      >> 2^{(k-j)( -p(d-1)/2 + d - pM )}
%
% 2^{(k-j)( d - p(d-1)/2 - pM )}
% 
%
Because the set of points in $\mathcal{E}_k$ is $1/R$ separated, there are at most $O( (R \text{rad}(B_{k,u,n}))^{d+1} )$ points in $\mathcal{E}_k(u) \cap B_{k,u,n}$, and so the triangle inequality implies that
% \mathcal{E}_{k,l,r} - \widehat{\mathcal{E}}_{k,l,r} = \bigcup_{l,r} 
%\begin{align*}
%    &\left\| \sum\nolimits_{l,r} \sum\nolimits_{(x_0,t_0) \in (\mathcal{E}_{k,l,r} - \widehat{\mathcal{E}}_{k,l,r}) \cap B_n} 2^{k \left( \frac{d-1}{2} \right)} 2^{-(l+r)} f_{x_0,t_0} \right\|_{L^q(\text{Ann}_n^c)}\\
%    &\quad\quad \lesssim 2^{k \left( \frac{d-1}{q} \right)} 2^{jd/q'} (2^j \text{rad}(B_n))^{d + 1 + 1/q - M}
%\end{align*}
\begin{equation}
\begin{split}
    &\Big\| \sum\nolimits_{a,b} \sum\nolimits_{(x_0,t_0) \in \mathcal{E}_{k,a,b}(u) \cap B_{k,u,n}} 2^{k \left( \frac{d-1}{2} \right)} 2^{-(a+b)} f_{x_0,t_0} \Big\|_{L^1(\Lambda(u)^c)}\\
    &\quad\quad\quad \lesssim 2^{k (d - 1)} (R \text{rad}(B_{k,u,n}))^{d + 2 - M}.
\end{split}
\end{equation}
Since $\# \mathcal{E}_k \cap B_{k,u,n} \geq R 2^{u}\ \text{rad}(B_{k,u,n})$, and $\mathcal{E}_k$ is $1/R$ discretized, we must have
\begin{equation}
    \text{rad}(B_{k,u,n}) \geq (2^u / 2^d)^{\frac{1}{d-1}}\; 1/R.
\end{equation}
Thus
%
% 2^{jd(p-1)}
%
\begin{equation}
\begin{split}
    \Big\| \sum\nolimits_k F_{k,u} \Big\|_{L^1(\Lambda(u)^c)} &\lesssim_M \sum\nolimits_k \sum\nolimits_n 2^{k (d-1)} (R \text{rad}(B_{k,u,n}))^{d + 2 - M}\\
    &\lesssim \sum\nolimits_k 2^{k(d-1)} \Big( R \min\nolimits_n \text{rad}(B_{k,u,n}) \Big)^{d + 1 - M}\\
    &\quad\quad\quad\quad \left( \sum\nolimits_n R \text{rad}(B_{k,u,n}) \right) \\
    &\lesssim \sum\nolimits_k 2^{k (d-1)} 2^{u \left( \frac{d+1-M}{d-1} \right)} \left( 2^{-u} \# \mathcal{E}_k \right)\\
    &\lesssim 2^{u \left( \frac{2-M}{d-1} \right)} \sum\nolimits_k 2^{k (d-1)} \# \mathcal{E}_k
\end{split}
\end{equation}
Picking $M > 2 + (1 - 1/p)(1/p - 1/2)^{-1}$, and interpolating with the bounds on $\| \sum_k F_{k,u} \|_{L^2(M)}$ yields that
\begin{equation}
\begin{split}
    \Big\| \sum\nolimits_k F_{k,u} \Big\|_{L^p(\Lambda(u)^c)} &\lesssim \left( 2^{u \left( \frac{2-M}{d-1} \right)} \right)^{2/p - 1} \left( R^{d/2} \left( u^{1/2} 2^{u \left( \frac{1}{d-1} \right)} \right) \right)^{2(1 - 1/p)}\\
    &\quad\quad\quad\quad \left( \sum 2^{k(d-1)} \# \mathcal{E}_k \right)^{1/p} \\
    &\lesssim 2^{-u \varepsilon} R^{d(1 - 1/p)} \sum\nolimits_k \left( \sum 2^{k(d-1)} \# \mathcal{E}_k \right)^{1/p}.
\end{split}
\end{equation}
So now we know
\begin{equation}
    \Big\| \sum\nolimits_k F_{k,u} \Big\|_{L^p(M)} \lesssim R^{d(1-1/p)} 2^{- u \varepsilon} \left( \sum\nolimits_k 2^{k(d-1)} \# \mathcal{E}_k \right)^{1/p}.
\end{equation}
The exponential decay in $u$ allows us to sum in $u$ to obtain that
\begin{equation}
    \Big\| \sum\nolimits_u \sum\nolimits_k F_{k,u} \Big\|_{L^p(M)} \lesssim R^{d(1 - 1/p)} \left( \sum\nolimits_k 2^{k(d-1)} \# \mathcal{E}_k \right)^{1/p}.
\end{equation}
This is precisely the bound we were required to prove.
\end{proof}

\section{Analysis of Regime II via Local Smoothing} \label{regime2finalsection}

In this section, we bound the operators $\{ T^{II} \}$, by a reduction to an endpoint local smoothing inequality, namely, the inequality that
\begin{equation} \label{thelocalsmoothinginequality}
    \| e^{2\pi i t P} f \|_{L^{p'}(M) L^{p'}_t(I_0)} \lesssim \| f \|_{L^{p'}_{\alpha(p) - 1/p'}}.
\end{equation}
This inequality is proved in Corollary 1.2 of \cite{LeeSeeger} for $1 < p < 2(d-1)/(d+1)$ for classical elliptic pseudodifferential operators $P$ satisfying the cosphere assumption of Theorem \ref{CpVersionOfTheorem}. The range of $p$ here is also precisely the range of $p$ in Theorem \ref{CpVersionOfTheorem}. Alternatively, Lemma \ref{LpBoundLemma} can be used to prove \eqref{thelocalsmoothinginequality} independently of \cite{LeeSeeger} in the same range by a generalization of the method of Section 10 of \cite{HeoandNazarovandSeeger}.

\begin{lemma} \label{LocalSmoothingLargeTimesTheorem}
    Using the notation of Proposition \ref{TjbLemma}, let
    \begin{equation}
        T^{II} = \int b^{II}(t) (Q_R \circ e^{2 \pi i t P} \circ Q_R)\; dt.
    \end{equation}
    For $1 < p < 2 (d-1)/(d+1)$, we then have
    \begin{equation}
        \| T^{II} u \|_{L^p(M)} \lesssim R^{\alpha(p) - 1/p'} \| b^{II} \|_{L^p(I_0)} \| u \|_{L^p(M)}.
    \end{equation}
\end{lemma}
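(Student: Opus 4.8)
\textbf{Proof plan for Lemma \ref{LocalSmoothingLargeTimesTheorem}.} The plan is to reduce the bound on $T^{II}$ to the endpoint local smoothing inequality \eqref{thelocalsmoothinginequality} by a duality argument. First I would observe that, by duality, it suffices to prove that for any $v \in L^{p'}(M)$,
\begin{equation}
    |\langle T^{II} u, v \rangle| \lesssim R^{\alpha(p) - 1/p'} \| b^{II} \|_{L^p(I_0)} \| u \|_{L^p(M)} \| v \|_{L^{p'}(M)}.
\end{equation}
Expanding the definition of $T^{II}$ and using the self-adjointness of $Q_R$, I would write
\begin{equation}
    \langle T^{II} u, v \rangle = \int_{I_0} b^{II}(t) \langle e^{2\pi i t P} (Q_R u), Q_R v \rangle\; dt,
\end{equation}
and then apply H\"older's inequality in $t$, with exponents $p$ and $p'$, to bound this by
\begin{equation}
    \| b^{II} \|_{L^p(I_0)} \left( \int_{I_0} |\langle e^{2\pi i t P}(Q_R u), Q_R v \rangle|^{p'}\; dt \right)^{1/p'}.
\end{equation}

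Next I would bound the remaining factor by $\| e^{2\pi i t P}(Q_R u) \|_{L^{p'}_t(I_0) L^{p'}(M)} \| Q_R v \|_{L^p(M)}$ using H\"older in the space variable (after another application of H\"older in $t$, or more cleanly by writing the pairing as an integral over $M$ and applying H\"older in $x$ pointwise in $t$ then H\"older in $t$). The point is that $\| Q_R v \|_{L^p(M)} \lesssim \| v \|_{L^p(M)}$ since $Q_R = q(P/R)$ is uniformly bounded on $L^p$ — this is a standard consequence of the fact that $q$ is a fixed Schwartz function, so $q(P/R)$ has a kernel with good bounds (one can cite the same kind of estimate used implicitly throughout the paper, e.g. via Lemma \ref{pseudodifferentialCoordinateLemma} or a spectral multiplier theorem for Schwartz symbols). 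Wait — $v \in L^{p'}$ here, not $L^p$; let me instead keep $Q_R v$ paired against the wave evolution and apply the local smoothing bound to $e^{2\pi i t P}(Q_R u)$. So the bound becomes
\begin{equation}
    |\langle T^{II} u, v \rangle| \lesssim \| b^{II} \|_{L^p(I_0)} \| e^{2\pi i t P}(Q_R u) \|_{L^{p'}_t(I_0) L^{p'}_x(M)} \| Q_R v \|_{L^p_x(M)},
\end{equation}
and then $\| Q_R v \|_{L^p} \lesssim \| v \|_{L^p}$... but $v \in L^{p'}$. The cleaner route: pair $e^{2\pi i tP}$ with $Q_R v \in L^{p'}$ and estimate $Q_R u \in L^p$, using the \emph{dual} form of local smoothing, $\| \int_{I_0} g(t) e^{-2\pi i tP}(Q_R v)\, dt \|_{L^p} \lesssim R^{\alpha(p)-1/p'}\|g\|_{L^{p'}}\|Q_R v\|_{L^{p'}}$, which is equivalent to \eqref{thelocalsmoothinginequality} by duality and unitarity of $e^{2\pi i tP}$.

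Finally I would account for the frequency localization: since $Q_R$ localizes $P$ to the region $P \sim R$, the inhomogeneous Sobolev norm $\| Q_R u \|_{L^p_{s}}$ on the right side of the dualized local smoothing estimate is comparable to $R^{s} \| Q_R u \|_{L^p} \lesssim R^s \| u \|_{L^p}$, with $s = \alpha(p) - 1/p'$; this produces exactly the factor $R^{\alpha(p)-1/p'}$ in the statement. The main obstacle, as usual in these reductions, is making the frequency-localization step rigorous: one must confirm that \eqref{thelocalsmoothinginequality} applied to $Q_R u$ — whose $P$-frequencies are concentrated in $[R/2,4R]$ by the support property of $q$ — genuinely yields $R^{\alpha(p)-1/p'}\|u\|_{L^p}$ rather than a norm involving a full range of frequencies, and that the operators $Q_R$ are uniformly bounded on $L^p$ and $L^{p'}$ uniformly in $R$. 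Both facts follow from the standard calculus of Schwartz functions of $P$ (the kernels of $q(P/R)$ satisfy uniform-in-$R$ pointwise bounds after rescaling), so the argument is routine modulo carefully invoking \cite{LeeSeeger}; no new ideas beyond duality and H\"older are needed.
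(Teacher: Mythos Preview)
Your approach is essentially the paper's: duality, H\"older in $t$, the local smoothing inequality \eqref{thelocalsmoothinginequality}, and a Bernstein-type frequency localization to produce the factor $R^{\alpha(p)-1/p'}$. The paper organizes the duality step more cleanly by observing that the \emph{class} of operators $T^{II}$ is closed under adjoints (since $(T^{II})^*$ corresponds to the symbol $\overline{b^{II}}$), so it suffices to prove the $L^{p'}\to L^{p'}$ bound directly; then $L^{p'}$-boundedness of $Q_R$, H\"older and Minkowski in $t$, \eqref{thelocalsmoothinginequality}, and Bernstein on compact manifolds give
\[
\|T^{II}u\|_{L^{p'}(M)} \lesssim \|b^{II}\|_{L^p(I_0)} \|Q_R u\|_{L^{p'}_{\alpha(p)-1/p'}(M)} \lesssim R^{\alpha(p)-1/p'}\|b^{II}\|_{L^p(I_0)}\|u\|_{L^{p'}(M)},
\]
avoiding the pairing confusion you encountered. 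Your displayed ``dual form of local smoothing'' has the exponents swapped: to pair against $Q_R u \in L^p$ you need the integral in $L^{p'}$, and H\"older in $t$ gives $\|g\|_{L^p}$, not $\|g\|_{L^{p'}}$; the corrected statement
\[
\Big\|\int_{I_0} g(t)\, e^{-2\pi i tP}(Q_R v)\,dt\Big\|_{L^{p'}(M)} \lesssim R^{\alpha(p)-1/p'}\|g\|_{L^p(I_0)}\|Q_R v\|_{L^{p'}(M)}
\]
is exactly the paper's $L^{p'}$ bound applied to $v$, and with it your argument closes.
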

\begin{proof}
    For each $R$, the \emph{class} of operators of the form $\{ T^{II} \}$ formed from a given function $b^{II}$ is closed under taking adjoints. Indeed, if $T^{II}$ is obtained from $b^{II}$, then $(T^{II})^*$ is obtained from the multiplier $\overline{b^{II}}$. Because of this self-adjointness, if we can prove that
    \begin{equation}
        \| T^{II} u \|_{L^{p'}(M)} \lesssim R^{\alpha(p) - 1/p'} \| b^{II} \|_{L^p(I_0)} \| u \|_{L^{p'}(M)},
    \end{equation}
    then we obtain the required result by duality. We apply this duality because it is easier to exploit local smoothing inequalities in $L^{p'}(M)$ since now $p' > 2$.

    We begin by noting that the operators $\{ Q_R \}$, being a bounded family of order zero pseudo-differential operators, are uniformly bounded on $L^{p'}(M)$. Thus
    \begin{equation}
    \begin{split}
        \| T^{II} u \|_{L^{p'}(M)} &= \Big\| Q_R \circ \Big( \int_{I_0} b^{II}(t) e^{2 \pi i tP} (Q_R u) \Big) \Big\|_{L^{p'}(M)}\\
        &\lesssim \Big\| \Big( \int_{I_0} b^{II}(t) e^{2 \pi i tP} (Q_R u) \Big) \Big\|_{L^{p'}(M)}.
    \end{split}
    \end{equation}
    Applying H\"{o}lder and Minkowski's inequalities, we find that
    \begin{equation}
    \begin{split}
        \| T^{II}u \|_{L^{p'}(M)} &\leq \| b^{II} \|_{L^p(\RR)} \Big\| \Big( \int_{I_0} |e^{2 \pi i t P} (Q_R u)|^{p'} \Big)^{1/p'} \Big\|_{L^{p'}(M)}.
    \end{split}
    \end{equation}
    Applying the endpoint local smoothing inequality \eqref{thelocalsmoothinginequality}, we conclude that
    \begin{equation}
    \begin{split}
        \| T^{II} u \|_{L^{p'}(M)} &\lesssim \| b^{II} \|_{L^p(\RR)}  \| e^{2 \pi i P} (Q_R u) \|_{L^{p'}_t L^{p'}_x}\\
        &\lesssim  \| b^{II} \|_{L^p(\RR)}  \| Q_R u \|_{L^q_{\alpha(p) - 1/p'}(M)},
    \end{split}
    \end{equation}
    Bernstein's inequality for compact manifolds (see \cite{Sogge}, Section 3.3) gives
    \begin{equation}
        \| Q_R u \|_{L^{q}_{\alpha(p) - 1/p'}(M)} \lesssim R^{\alpha(p) - 1/p'} \| u \|_{L^p(M)}.
    \end{equation}
    Thus we conclude that
    \begin{equation}
        \| T^{II}u \|_{L^{p'}(M)} \lesssim R^{\alpha(p) - 1/p'} \| b^{II} \|_{L^p(I_0)} \| u \|_{L^{p'}(M)},
    \end{equation}
    which completes the proof.
\end{proof}

Combining Lemma \ref{regime1Lemma} and Lemma \ref{LocalSmoothingLargeTimesTheorem} completes the proof of Proposition \ref{TjbLemma}, and thus of inequality \eqref{dyadicMainReulst}. Since \eqref{TrivialLowFrequencyBound} was already proven as a consequence of Lemma \ref{lowjLemma}, this completes the proof of Theorem \ref{CpVersionOfTheorem}, and thus the main results of the paper.

\section{Appendix}

In this appendix, we provide proofs of Lemmas \ref{decompositionLemma} and \ref{pseudodifferentialCoordinateLemma}.

\begin{proof} [Proof of Lemma \ref{decompositionLemma}]
    The intervals $\{ I_{t_0} : t_0 \in \mathcal{T}_R \}$ cover $[-\varepsilon,\varepsilon]$, and so we may consider an associated partition $\mathbb{I}_{[-\varepsilon,\varepsilon]} = \sum_{t_0} \chi_{t_0}$ where $\text{supp}(\chi_{t_0}) \subset I_{t_0}$ and $|\chi_{t_0}| \leq 1$. Define $b_{t_0}^I = \chi_{t_0} b_j$ and $b^{II} = (1 - \mathbb{I}_{[-\varepsilon,\varepsilon]} ) b$. Then $b = \sum_{t_0} b_{t_0}^I + b^{II}$, and the support assumptions are satisfied. It remains to prove the required norm bounds for these choices. For each $n \in \ZZ$, define a function $b_{n}: I_0 \to \CC$ by setting $b_{n}(t) = R \widehat{m}(R (t + n))$. Then $b = \sum_n b_{n}$. Moreover,
    \begin{align} \label{translationlpcalculation}
    \begin{split}
        &\left( \sum_{n \neq 0} \left[ \langle R n \rangle^{\alpha(p)} \| b_{n} \|_{L^p(I_0)} \right]^p \right)^{1/p}\\
        %&\quad\quad\quad \sim \left( \int_{-1/2}^{1/2} \sum_{n \neq 0} \left[ \langle R(t + n) \rangle^{\alpha(p)} |R \widehat{m}(R ( t + n ))| \right]^p\; dt \right)^{1/p}\\
        &\quad\quad\quad \sim \left( \int_{|t| \geq 1/2} \left[ \langle R t \rangle^{\alpha(p)} |R \widehat{m}(R t)| \right]^p \right)^{1/p}\\
        &\quad\quad\quad = R^{1/p'} \left( \int_{|t| \geq R/2} \left[ |t|^{\alpha(p)} \widehat{m}(t) \right]^p \right)^{1/p} \leq R^{1/p'} C_p(m).
    \end{split}
    \end{align}
    Write $b_{t_0}^I = \sum_n b_{t_0,n}^I$ and $b^{II} = \sum_n b_{n}^{II}$, where $b_{t_0,n}^I = \chi_{t_0} b_n$ and $b_n^{II} = \mathbb{I}_{I_0 \smallsetminus [-\varepsilon, \varepsilon] } b_{n}$. Then
    \begin{align} \label{zerobj0iicalculation}
    \begin{split}
        \| b_{0}^{II} \|_{L^p(I_0)} &= \left( \int_{\varepsilon \leq |t| \leq 1/2} |R \widehat{m}(R t)|^p \right)^{1/p}\\
        &= R^{1/p'} \left( \int_{R \varepsilon \leq |t| \leq R/2} |\widehat{m}(t)|^p \right)^{1/p} \lesssim R^{1/p' - \alpha(p)} C_p(m).
    \end{split}
    \end{align}
    Using \eqref{translationlpcalculation}, \eqref{zerobj0iicalculation}, and H\"{o}lder's inequality, we conclude that
    \begin{align}
    \begin{split}
        \|  b^{II} \|_{L^p(I_0)} &\leq \sum\nolimits_n \| b_{n}^{II} \|_{L^p(I_0)}\\
        &\leq \| b_{0}^{II} \|_{L^p(I_0)} + \sum\nolimits_{n \neq 0} \left[ |R n|^{\alpha(p)} \| b_{n}^{II} \|_{L^p(I_0)} \right] \frac{1}{|R n|^{\alpha(p)}}\\
        &\leq \| b_{0}^{II} \|_{L^p(I_0)} + R^{-\alpha(p)} \left( \sum\nolimits_{n \neq 0} \left[ |R n|^{\alpha(p)} \| b_{n} \|_{L^p(I_0)} \right]^p \right)^{1/p}\\ % \left( \sum_{n \neq 0} \frac{1}{|R n |^{\alpha(p) p'}} \right)^{1/p'}\\
        &\lesssim R^{1/p' - \alpha(p)} C_p(m).
    \end{split}
    \end{align}
    A similar calculation shows that
    \begin{align} \label{eacht0bjcalculation}
    \begin{split}
        \| b_{t_0}^I \|_{L^p(I_0)} &\leq \sum\nolimits_n \| b_{t_0,n}^I \|_{L^p(I_0)}\\
        &= \| b_{t_0,0}^I \|_{L^p(I_0)} + \sum\nolimits_{n \neq 0} \| b_{t_0,n}^I \|_{L^p(I_0)}\\
        &\lesssim \| b_{t_0,0}^I \|_{L^p(I_0)} + R^{-\alpha(p)} \Big( \sum\nolimits_{n \neq 0} |R n|^{\alpha(p)} \| b_{t_0,n}^I \|_{L^p(I_0)}^p \Big)^{1/p}\\
        &\lesssim \| b_{t_0,0}^I \|_{L^p(I_0)} + \Big( \sum\nolimits_{n \neq 0} |R n|^{\alpha(p)} \| b_{t_0,n}^I \|_{L^p(I_0)}^p \Big)^{1/p}.
    \end{split}
    \end{align}
    Using \eqref{eacht0bjcalculation}, we calculate that
    \begin{align} \label{bjt0Icalculation}
    \begin{split}
        &\left( \sum_{t_0 \in \mathcal{T}_R} \left[ \| b_{t_0}^I \|_{L^p(I_0)} \langle R t_0 \rangle^{\alpha(p)} \right]^p \right)^{1/p}\\
        &\quad\quad \lesssim \left( \sum_{t_0 \in \mathcal{T}_R} \left[ \| b_{t_0,0}^I \|_{L^p(I_0)} \langle R t_0 \rangle^{\alpha(p)} \right]^p + \sum_{n \neq 0} \left[ |R n|^{\alpha(p)} \| b_{t_0,n}^I \|_{L^p(I_0)} \right]^p \right)^{1/p}\\
        &\quad\quad \lesssim \left( \int_{\RR} \left[ \langle R t \rangle^{\alpha(p)} R \widehat{m}(R t) \right]^p dt \right)^{1/p}\\
        &\quad\quad \lesssim R^{1/p'} C_p(m).
    \end{split}
    \end{align}
    Since each function $b_{t_0}^I$ is supported on a length $1/R$ interval, we have
    \begin{equation}
        \| b_{t_0}^I \|_{L^1(I_0)} \lesssim R^{-1/p'} \| b_{t_0}^I \|_{L^p(I_0)},
    \end{equation}
    and substituting this inequality into \eqref{bjt0Icalculation} completes the proof.
\end{proof}

\begin{proof} [Proof of Lemma \ref{pseudodifferentialCoordinateLemma}]
    For each $\alpha$, given our choice of $\varepsilon_M$, the Lax-H\"{o}rmander Parametrix construction (see Theorem 4.1.2 of \cite{Sogge}) guarantees that we can find operators $\tilde{W}_\alpha(t)$ and $\tilde{R}_\alpha(t)$ for $|t| \leq \varepsilon_M$, such that for $u \in L^1(M)$ with $\text{supp}(u) \subset V_\alpha^*$,
    \begin{equation}
        e^{2 \pi i t P} u = \tilde{W}_\alpha(t) u + \tilde{R}_\alpha(t) u,
    \end{equation}
    where $\tilde{R}_\alpha(t)$ has a smooth kernel, and the kernel of $\tilde{W}_\alpha(t)$ is given in coordinates by
    \begin{equation}
        \tilde{W}_\alpha(t)(x,y) = \int s_0(t,x,y,\xi) e^{2 \pi i [ \phi(x,y,\xi) + t p(y,\xi) ]}\; d\xi,
    \end{equation}
    for an order zero symbol $s_0$ with
    \begin{equation}
        \text{supp}_{x,y,\xi}(s_0) \subset \{ (x,y,\xi) \in U_\alpha \times V_\alpha^* \times \RR^d : d_M(x,y) \leq 1.01\varepsilon_M\ \text{and}\ |\xi| \leq 1 \},
    \end{equation}
    and an order one symbol $\phi$, homogeneous in $\xi$ of order one, solving the Eikonal equation and vanishing for $x \in \Sigma_\alpha(y,\xi)$ as required by the lemma. The only difference here compared to Theorem 4.1.2 of \cite{Sogge} is that in that construction the function $\phi$ there is chosen to vanish for $x \in \tilde{\Sigma}_\alpha(y,\xi)$, where $\tilde{\Sigma}_\alpha(y,\xi) = \{ x : \xi \cdot (x - y) = 0 \}$. The only property of this choice that is used in the proof is that the perpendicular vector to $\tilde{\Sigma}_\alpha(y,\xi)$ at $y$ is $\xi$, and this is also true of the hypersurfaces $\Sigma_\alpha(y,\xi)$ that we have specified, so that there is no problem making this modification.

    In the remainder of the proof it will be convenient to fix a orthonormal basis $\{ e_k \}$ of eigenfunctions for $P$, such that $\Delta e_k = \lambda_k e_k$ for a non-decreasing sequence $\{ \lambda_k \}$. We fix $u \in L^1(M)$ with $\text{supp}(u) \subset V_\alpha^*$ and $\| u \|_{L^1(M)} \leq 1$.

    We begin by mollifying the functions $Q$. We proceed here with a similar approach to Theorem 4.3.1 of \cite{Sogge}. We fix $\rho \in C_c^\infty(\RR)$ equal to one in a neighborhood of the origin and with $\rho(t) = 0$ for $|t| \geq \varepsilon_M / 2$. We write
    \begin{equation}
    \begin{split}
        Q &= \int R \widehat{q}(R t) e^{2 \pi i t P}\; dt\\
        &= \int R \widehat{q}(Rt) \Big\{ \rho(t) \tilde{W}(t) + \rho(t) \tilde{R}(t) + (1 - \rho(t)) e^{2 \pi i t P} \Big\}\; dt\\
        &= Q_I + Q_{II} + Q_{III}.
    \end{split}
    \end{equation}
    The rapid decay of $\widehat{q}$ implies that the function $\psi(t) = R \widehat{q}(Rt) (1 - \rho(t))$ satisfies $\| \partial_t^N \psi \|_{L^1(\RR)} \lesssim_M R^{-M}$, and so
    \begin{equation} \label{psidecaybound}
        |\widehat{\psi}(\lambda)| \lesssim_{N,M} R^{-M} \lambda^{-N}.
    \end{equation}
    But since $Q_{III} = \widehat{\psi}(-P)$, we can write the kernel of $Q_{III}$ as
    \begin{equation}
        Q_{III}(x,y) = \sum\nolimits_\lambda \widehat{\psi}(-\lambda_k) e_k(x) \overline{e_k(y)},
    \end{equation}
    Sobolev embedding and \eqref{psidecaybound} imply that $|Q_{III}(x,y)| \lesssim_N R^{-N}$ and thus
    \begin{equation} \label{QThreeBound}
        \| Q_{III} u \|_{L^\infty(M)} \lesssim_N R^{-N}.
    \end{equation}
    Integration by parts, using the fact that $q$ vanishes near the origin, yields that
    \begin{equation}
        \left| \int R \widehat{q}(Rt) \rho(t) \tilde{R}(t,x,y) \right| \lesssim_N R^{-N},
    \end{equation}
    and thus
    \begin{equation} \label{QTwoBound}
        \| Q_{II} u \|_{L^\infty(M)} \lesssim_N R^{-N}.
    \end{equation}
    Now we expand
    \begin{equation}
        Q_I = \iint R \widehat{q}(Rt) \rho(t) s_0(t,x,y,\xi) e^{2 \pi i [ \phi(x,y,\xi) + t p(y,\xi) ]}\; d\xi\; dt.
    \end{equation}
    We perform a Fourier series expansion, writing
    \begin{equation} c_n(x,y,\xi) = \int \rho(t) s_0(t,x,y,\xi) e^{-2 \pi i n t}\; dt. \end{equation}
    Then the symbol estimates for $s_0$, and the compact support of $\rho$ imply that
    \begin{equation} |\partial_{x,y}^\alpha \partial_\xi^\beta c_n(x,y,\xi)| \lesssim_{\alpha,\beta,N} |n|^{-N} \langle \xi \rangle^{-\beta}. \end{equation}
    Using Fourier inversion we can write
    \begin{equation}
    \begin{split}
        Q_I(x,y) &= \iint \sum\nolimits_n R \widehat{q}(Rt) c_n(x,y,\xi) e^{2 \pi i [ \phi(x,y,\xi) + t [ n + p(y,\xi) ] ]}\; d\xi\; dt\\
        &= \int \sum\nolimits_n q \Big( \big( n + p(y,\xi) \big) / R \Big) c_n(x,y,\xi) e^{2 \pi i \phi(x,y,\xi)}\; d\xi\\
        &= \int \tilde{\sigma}_\alpha(x,y,\xi) e^{2 \pi i \phi(x,y,\xi)}\; d\xi, 
    \end{split}
    \end{equation}
    where
    \begin{equation} \tilde{\sigma}_\alpha(x,y,\xi) = \sum_{n \in \ZZ} q \left( \frac{n + p(y,\xi)}{R} \right) c_n(x,y,\xi). \end{equation}
    The $n$th term of this sum is supported on $R/4 - n \leq p(y,\xi) \leq 4R - n$, so in particular, if $n > 4R$ then the term vanishes. For $n \leq 4R$, we have estimates of the form
    %
    % Good range is R/4 - n >= R/8 and 4R - n <= 8R
    % so n <= R/8 and n >= -4R
    %
    \begin{equation} \left| \partial_{x,y}^\alpha \partial_\xi^\beta \left\{ q \left( \frac{n + p(y,\xi)}{R} \right) c_n(x,y,\xi) \right\} \right| \lesssim_{\alpha,\beta,N} |n|^{-N}. \end{equation}
    and for $-4R \leq n \leq R/8$,
    \begin{equation}
    \begin{split}
        \left| \partial_{x,y}^\alpha \partial_\xi^\beta \left\{ q \left( \frac{n + p(y,\xi)}{R} \right) c_n(x,y,\xi) \right\} \right| &\lesssim_{\alpha,\beta,N} |n|^{-N} R^{-\beta}.
    \end{split}
    \end{equation}
    But this means that if we define
    \begin{equation} \sigma_\alpha(x,y,\xi) = \sum\nolimits_{-4R \leq n \leq R/8} q \left( \frac{n + p(y,\xi)}{R} \right) c_n(x,y,\xi). \end{equation}
    and define
    \begin{equation} Q_\alpha(x,y) = \int \sigma_\alpha(x,y,\xi) e^{2 \pi i \phi(x,y,\xi)}\; d\xi \end{equation}
    then
    \begin{equation} \Big|\partial_{x,y}^\alpha \partial_\xi^\beta \big\{ \tilde{\sigma}_\alpha - \sigma \big\}(x,y,\xi) \Big| \lesssim_{\alpha,\beta,N,M} R^{-N} \langle \xi \rangle^{-M}, \end{equation}
    and so
    \begin{equation} \label{QalphaApproximation}
        \| ( Q_I - Q_\alpha ) u \|_{L^\infty(M)} \lesssim_N R^{-N}.
    \end{equation}
    Combining \eqref{QThreeBound}, \eqref{QTwoBound}, and \eqref{QalphaApproximation}, we conclude that
    \begin{equation} \label{QApproximationTheorem}
        \| (Q - Q_\alpha) u \|_{L^\infty(M)} \lesssim_N R^{-N}.
    \end{equation}
    Since $\sigma_\alpha$ is supported on $|\xi| \sim R$, we have verified the required properties of $Q_\alpha$.

    Using the bounds on $Q - Q_\alpha$ obtained above, we see that
    \begin{equation} \big\| [(Q \circ e^{2 \pi i t P} \circ Q) - (Q_\alpha \circ e^{2 \pi i t P} \circ Q_\alpha)] \{ u \} \big\|_{L^\infty(M)} \lesssim_N R^{-N}. \end{equation}
    Since $\tilde{R}_\alpha(t)$ has a smooth kernel, we also see that
    \begin{equation}
    \begin{split}
        &\big\| (Q_\alpha \circ (e^{2 \pi i t P} - \tilde{W}_\alpha(t)) \circ Q_\alpha) \{ u \} \big\|_{L^\infty(M)}\\
        &\quad\quad = \big\| (Q_\alpha \circ \tilde{R}_\alpha(t) \circ Q_\alpha) \{ u \} \big\|_{L^\infty(M)} \lesssim_N R^{-N}.
    \end{split}
    \end{equation}
    We now write
    \begin{equation}
    \begin{split}
        &(Q_\alpha \circ \tilde{W}_\alpha(T))(x,y)\\
        &\quad = \int \sigma_\alpha(x,\xi) s_0(t,z,y,\eta) e^{2 \pi i [\phi(x,z,\xi) + \phi(z,y,\eta) + t p(y,\eta) ]}\; d\xi\; d\eta\; dz.
    \end{split}
    \end{equation}
    The phase of this equation has gradient in the $z$ variable with magnitude $\gtrsim R$ for $|\eta| \ll R$, and $\gtrsim R |\xi|$ if $|\eta| \gg R$. Thus, if we define $s(t,x,y,\xi) = s_0(t,x,y,\xi) \chi(\xi / R)$ where $\text{supp}(\chi) \subset [1/8,8]$, and then define
    \begin{equation} W_\alpha(t)(x,y) = \int s(t,x,y,\xi) e^{2 \pi i [ \phi(x,y,\xi) + t p(y,\xi) ]}\; d\xi, \end{equation}
    then we may integrate by parts in the $z$ variable to conclude that
    \begin{equation} \Big|\big(Q_R \circ (\tilde{W}_\alpha(t) - W_\alpha(t)) \big)(x,y) \Big| \lesssim_N R^{-N}, \end{equation}
    and thus
    \begin{equation} \| (Q_R \circ (\tilde{W}_\alpha(t) - W_\alpha(t)) \circ Q_R) u \|_{L^\infty(M)} \lesssim R^{-N}. \end{equation}
    This proves the required estimates for the operators $W_\alpha$.
\end{proof}

\pagebreak[4]

\bibliographystyle{amsplain}
\bibliography{MultipliersOfLaplacianOnSd}

\end{document}